\def\Z{{\mathbb Z}} 
\def\Q{{\mathbb Q}} 
\def\C{{\mathbb C}} 
\def\P{{\mathbb P}} 
\def\F{{\mathbb F}} 
\def\SS{{\mathbb S}} 
\def\V{{\mathbb V}}
\def\cC{{\mathcal C}}
\def\cD{{\mathcal D}}
\def\d{{\mathfrak{d}}}
\def\E{{\mathbb E}}
\def\cE{{\mathcal{E}}}
\def\cG{{\mathcal{G}}}
\def\hyp{{\mathrm{hyp}}}
\def\H{{\mathcal H}}
\def\I{{\mathcal I}}
\def\J{{\mathcal J}}
\def\L{{\mathbb{L}}}
\def\M{{\mathcal M}}
\def\MHS{{\mathsf{MHS}}}
\def\cP{{\mathcal P}}
\def\p{{\mathfrak{p}}}
\def\prim{{\mathrm{prim}}}
\def\reg{{\mathrm{reg}}}
\def\T{{\mathcal T}}
\def\U{{\mathcal U}}
\def\u{{\mathfrak{u}}}
\def\V{{\mathbb{V}}}
\def\cV{{\mathcal V}}
\def\X{{\mathcal X}}
\def\Z{{\mathbb{Z}}}
\def\cZ{{\mathcal Z}}
\def\D{{\Delta}} 
\def\G{{\Gamma}}
\def\dot{{\bullet}} 
\def\bs{\backslash} 
\def\ab{\mathrm{ab}}
\def\adj{\mathrm{adj}}
\def\hyp{\mathrm{hyp}}
\def\orb{\mathrm{orb}}
\def\un{\mathrm{un}}
\newcommand{\Ext}{\operatorname{Ext}}
\newcommand{\Gr}{\operatorname{Gr}}
\newcommand{\Hom}{\operatorname{Hom}}
\newcommand{\Der}{\operatorname{Der}}
\newcommand\id{\operatorname{id}}
\newcommand\Aut{\operatorname{Aut}} 
\newcommand\Jac{\operatorname{Jac}}
\newcommand\Sp{\operatorname{Sp}} 
\newtheorem{theorem}{Theorem}[section]
\newtheorem{lemma}[theorem]{Lemma}
\newtheorem{proposition}[theorem]{Proposition}
\newtheorem{corollary}[theorem]{Corollary}
\newtheorem{bigtheorem}{Theorem}
\theoremstyle{definition}
\newtheorem{definition}[theorem]{Definition}
\newtheorem{example}[theorem]{Example}
\theoremstyle{remark}
\newtheorem{remark}[theorem]{Remark}
\begin{document}

\title{Remarks on Collino cycles
and hyperelliptic Johnson homomorphisms
}
\author{
Ma Luo \and Tatsunari Watanabe
}
\address{School of Mathematical Sciences, East China Normal University, Shanghai}
\email{mluo@math.ecnu.edu.cn}
\address{Mathematics Department, Embry-Riddle Aeronautical University, Prescott}
\email{watanabt@erau.edu}

\thanks{The first author is supported partly by Science and Technology Commission of Shanghai Municipality (No. 22DZ2229014), and partly by National Natural Science Foundation of China, Grant No. 12201217.}

\begin{abstract}
\smallskip
A Collino cycle is a higher cycle on the Jacobian of a hyperelliptic curve. The universal family of Collino cycles naturally gives rise to a normal function, whose induced monodromy relates to the hyperelliptic Johnson homomorphism. Colombo computed this monodromy explicitly and made this relation precise. We recast this in the perspective of relative completion. In particular, we use Colombo's result to construct Collino classes, which are cohomology classes of hyperelliptic mapping class groups with coefficients in a certain symplectic representation. We also determine the dimension of their span in the case of the level two hyperelliptic mapping class group.
\end{abstract}
\maketitle

\setcounter{tocdepth}{1}
\tableofcontents

\section{Introduction}
Denote the mapping class group of a compact topological surface $S$ of genus $g$ with $n$ distinct marked points by $\G_{g,n}$. For $g \geq 1$, there is a surjective natural representation $\G_{g,n}\to \Sp(H_1(S, \Z))$ and its kernel is called the Torelli group, denoted by $T_{g,n}$. 

For a compact Riemann surface $C$ (which we call a curve in this paper), let $\Jac C$ be its jacobian. Then using a point $x$ in $C$, we have the algebraic cycle $C_x-C^{-}_x$ in $\Jac C$ called the Ceresa cycle.  Let $\T_{g,n}$ be the Torelli space of marked, $n$-pointed curves of genus $g$. There is a bundle $\J_1\to \T_{g,n}$ over $\T_{g,n}$ of the intermediate jacobians whose fiber over $[C]$ is $J_1(\Jac C)$. The cycle $C_x-C^{-}_x$ determines a point $e_{C,x}$ in the intermediate jacobian $J_1(\Jac C)$. This construction extends to families and defines a section \textcolor{black}{$e_{g,1}:\T_{g,1}\to \J_1$} of the bundle. This section is by definition a normal function (see Definition \ref{nf}). It induces a homomorphism of fundamental groups \textcolor{black}{$\xi_{g,1}: T_{g,1}\to H_3(\Jac C, \Z) = \Lambda^3H_1(C)$}, which is equal to twice the Johnson homomorphism (see \cite[\S 6]{hain}).  On the other hand, the Johnson homomorphism yields a nontrivial class in $H^1(\G_{g,1}, H_1(S, \Q))$ and $H^1(\G_{g,1}, \Lambda^3H_1(S,\Q)/\theta\wedge H_1(S,\Q))$ (see \cite{mor}, \cite[\S 5]{hm}). 

When $C$ is hyperelliptic and $x$ is a Weierstrass point, its corresponding Ceresa cycle $C_x-C^{-}_x$ is trivial, and in general its image in the primitive jacobian $J_1(\Jac C)_\prim$ is trivial. So instead, we will consider a canonical higher cycle $(Z, q_1, q_2)$ associated to $C$ with two ordered distinct Weierstrass points $q_1$ and $q_2$ constructed in \cite{collino} by Collino. The higher cycle $Z$ can be viewed as a degeneration of the Ceresa cycle for the stable curve obtained from $C$ by gluing $q_1$ and $q_2$. Collino proves in \cite{collino} that the regulator image, $\mathrm{reg}(Z)$, of $Z$ is nontrivial for general hyperelliptic curves. In \cite{colombo}, Colombo constructs an extension class \textcolor{black}{$Pe$} associated to $C$ with $q_1$ and $q_2$, which is equal to $(2g+1)$$\mathrm{reg}(Z)$ in the primitive intermediate jacobian $I_2(\Jac C)_{\prim}$. Denote the hyperelliptic Torelli space by $\H_g[0]$. There is a normal function \textcolor{black}{$P\cE:\H_g[0] \to \I_{2\prim}$} extending the class \textcolor{black}{$Pe$} and Colombo computes its monodromy action using higher Johnson homomorphisms, which we call hyperelliptic Johnson homomorphisms in this paper. 

For a Weierstrass point $q$ of a hyperelliptic curve $C$, denote the Lie algebra of the unipotent completion of $\pi_1(C, q)$ over $\Q$ by $\p$ and its derivation algebra by $\Der\p$. The Lie algebras $\p$ and $\Der\p$ admit weight filtrations $W_\bullet\p$ and $W_\bullet\Der\p$ from Hodge theory (see \cite{hain3}). 
Fix a hyperelliptic involution $\sigma$ of $S$. The hyperelliptic mapping class group $\Delta_g$ is defined as the subgroup of $\G_g$ consisting of elements that commute with the class $[\sigma]$. The hyperelliptic Torelli group denoted by $T\Delta_g$ is given by the intersection $\Delta_g\cap T_g$ in $\G_g$. As a higher Johnson homomorphism, we have the hyperelliptic Johnson homomorphism $T\Delta_g\to \Gr^W_{-2}\Der\p$, denoted by $\tau^\hyp_q$. Composing $\tau^\hyp_q$ with a certain $\Sp(H_1(C, \Q))$-equivariant projection of $\Gr^W_{-2}\Der\p$ onto $\Lambda^2H_1(C,\Q)/\langle\theta\rangle$, we obtain an $\Sp(H_1(C, \Q))$-equivariant homomorphism, which we denote by $\tilde\tau^\hyp_q$. Denote the normal function extending $\mathrm{reg}(Z)$ by \textcolor{black}{$\widetilde{R}_{\cZ}$} and the projection onto the fiber by $p_{I_{2\prim}}$. Their composition $p_{I_{2\prim}}\circ \widetilde{R}_\cZ$ induces a homomorphism of fundamental groups $T\Delta_g\to \Lambda^2H_1(C,\Z)/\langle\theta\rangle$ by $\pi_{\cZ}$. As a remark on Colombo's work, we prove

\begin{bigtheorem}\label{hyp johnson and normal function for Z} With notation as above,  if $g \geq 2$, then 
$$\tilde\tau^\hyp_{q_2} - \tilde\tau^\hyp_{q_1} = (g+1)\pi_{\cZ}.$$

\end{bigtheorem}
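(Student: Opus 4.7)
The plan is to derive this identity from Colombo's explicit monodromy calculation in \cite{colombo}, translated into the Lie-algebraic setting of relative completion used here. The starting point is the equality $Pe = (2g+1)\mathrm{reg}(Z)$ in $I_2(\Jac C)_\prim$, which promotes to an equality of normal functions $P\cE = (2g+1)\widetilde{R}_\cZ$ on $\H_g[0]$. Applying the $\Sp(H_1)$-equivariant projection $p_{I_{2\prim}}$ onto the fiber $\Lambda^2H_1/\langle\theta\rangle$ and passing to induced monodromy homomorphisms on $T\Delta_g$, the projected monodromy of $P\cE$ equals $(2g+1)\pi_{\cZ}$. This gives one side of the identity up to normalization.

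Next, I would recompute this same monodromy using Colombo's formula, which expresses it in terms of \emph{higher} Johnson homomorphisms. In the present setup these are identified with the hyperelliptic Johnson homomorphisms $\tau^\hyp_{q_i}: T\Delta_g \to \Gr^W_{-2}\Der\p$ at the two chosen Weierstrass points, via the comparison between the primitive intermediate jacobian of $\Jac C$ and the weight $-2$ piece of $\Der\p$. Because the construction of $Pe$ depends antisymmetrically on the ordered pair $(q_1,q_2)$ through a defining path joining them, the resulting formula naturally decomposes as a difference of contributions anchored at each endpoint, yielding $\tau^\hyp_{q_2} - \tau^\hyp_{q_1}$ after identification. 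Composing with the $\Sp(H_1)$-equivariant quotient onto $\Lambda^2 H_1/\langle\theta\rangle$ then produces a scalar multiple of $\tilde\tau^\hyp_{q_2} - \tilde\tau^\hyp_{q_1}$.

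Equating the two expressions for the projected monodromy of $P\cE$ yields the theorem, with the constant $g+1$ arising as $2g+1$ divided by the normalization of the $\Sp(H_1)$-equivariant projection from $\Gr^W_{-2}\Der\p$ onto $\Lambda^2 H_1/\langle\theta\rangle$. The main obstacle will be reconciling Colombo's Hodge-theoretic conventions with the weight-filtration conventions on $\Der\p$ used here, and pinning down the numerical normalization exactly. I expect this to require a careful $\Sp(H_1)$-trace computation within $\Gr^W_{-2}\Der\p$, using the decomposition of this module into irreducibles in the hyperelliptic setting and tracking how contraction with the symplectic form $\theta$ introduces factors depending on $g$. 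As a sanity check, I would evaluate both sides of the identity on a small family of bounding-pair maps in $T\Delta_g$, where the Johnson values and the regulator values can each be computed directly, to confirm that the constant is indeed $g+1$ and not some nearby value arising from sign or normalization discrepancies.
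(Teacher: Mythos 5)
Your proposal correctly identifies the relevant inputs (Colombo's identity $Pe=(2g+1)\reg(Z)$, the projection to $\Lambda^2H/\langle\theta\rangle$, the need for a normalization computation), but it leaves the actual content of the theorem unproved. The crucial step --- that the projected monodromy of $P\cE$ ``naturally decomposes'' into $\tau^\hyp_{q_2}-\tau^\hyp_{q_1}$ --- is asserted from the antisymmetry of Colombo's construction rather than established. This is not automatic: Colombo's extensions $E_s$ are built from the truncations $(L_{q_s}/L_{q_s}^4)^*$ of the group algebras of the \emph{punctured} curves $C-\{q_s\}$ based at a third Weierstrass point $p$, whereas $\tau^\hyp_{q_s}$ is defined via the action on $N_4\pi_1(C,q_s)$ for the closed curve based at $q_s$ itself; reconciling these, and in particular seeing that the dependence on $p$ drops out, is genuine work that your outline does not address. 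Moreover, your account of where the constant comes from is incorrect: it is not ``$2g+1$ divided by a normalization of the projection.'' In the paper the two constants arise independently --- the projection $\tilde\pi_{\Lambda^2H}$ applied to $\tfrac12\phi((\theta_i')^2-(\theta_i'')^2)$ gives $2(2g+2)\theta_i''=4(g+1)\theta_i''$, while Colombo's monodromy computation gives $\pi_\cZ(D)=4\theta_i''$ --- and their ratio is $g+1$; the factor $2g+1$ relating $\pi_\cE$ to $\pi_\cZ$ plays no role in this comparison.

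The paper's proof avoids your route entirely: it evaluates both sides on the generators of $T\Delta_g$, namely Dehn twists about symmetric separating curves (Brendle--Margalit--Putman), using the explicit formula $\tau^\hyp_q(d_i)=\tfrac12\phi((\theta_i'')^2)$ from \cite{wat} on one side and the explicit monodromy $\pi_\cZ(d_i)\in\{0,\,4\theta_i''\}$ (following Colombo's calculation) on the other, then compares coefficients. If you pursue your strategy you still need (a) a generator-level or otherwise concrete identification of the $P\cE$-monodromy with the Johnson difference, and (b) an honest computation of the projection constant via something like Lemma~\ref{proj formula onto [1^2]}; your proposed sanity check on bounding-pair maps will not help, since such maps do not lie in $T\Delta_g$ (which is generated by symmetric separating twists, not bounding pairs). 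As written, the proposal reduces the theorem to a claim that is essentially equivalent to the theorem itself.
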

The homomorphism $\tilde\tau^\hyp_q$ is $\Sp(H_1(C, \Q))$-equivariant  
and yields a nontrivial class, denoted by $[q]$, in $H^1(\Delta_g[2], \Lambda^2H_1(C,\Q)/\langle\theta\rangle)$, where $\Delta_g[2]$ is the level 2 hyperelliptic mapping class group. We call the class $[q]$ a Weierstrass class. Due to Theorem \ref{hyp johnson and normal function for Z}, the homomorphism $(g+1)\pi_{\cZ}$ produces a nontrivial class given by $[q_2] - [q_1]$, which we call a Collino class in $H^1(\Delta_g[2], \Lambda^2H_1(C,\Q)/\langle\theta\rangle)$. The fact that $\mathrm{reg}(Z)$ is nontrivial for general hyperelliptic curves is equivalent to that the corresponding Collino class is nontrivial. Our second main result is concerned with the subspace, denoted by $X_\zeta$, of $H^1(\Delta_g[2], \Lambda^2H_1(C,\Q)/\langle\theta\rangle)$ spanned by all Collino classes. Denote the subspace of $H^1(\Delta_g[2], \Lambda^2H_1(C,\Q)/\langle\theta\rangle)$ spanned by all Weierstrass classes by $X_\omega$. 
\begin{bigtheorem} With notation as above, if $g\geq 2$, then $X_\zeta =X_\omega$ and $\mathrm{dim}~X_\zeta = 2g+1$. 
\end{bigtheorem}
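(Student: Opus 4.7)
The inclusion $X_\zeta \subseteq X_\omega$ is immediate from Theorem \ref{hyp johnson and normal function for Z}: each Collino class $[q_2]-[q_1]$ is, up to the nonzero scalar $g+1$, the difference of two Weierstrass classes. So the plan is to prove the reverse inclusion $X_\omega \subseteq X_\zeta$ and compute $\dim X_\omega = 2g+1$.

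My main tool will be the outer action of the finite group $\Delta_g/\Delta_g[2]$ on $H^1(\Delta_g[2], V)$, where $V := \Lambda^2 H_1(C,\Q)/\langle\theta\rangle$. For a hyperelliptic surface this quotient is isomorphic to the symmetric group $S_{2g+2}$ acting by permutations of the $2g+2$ Weierstrass points. Because $\tilde\tau^\hyp_q$ is built from the base point $q$ in an $\Sp(H_1(C,\Q))$-equivariant manner, the assignment $q\mapsto [q]$ is $S_{2g+2}$-equivariant, so $X_\omega$ is a quotient of the permutation representation on Weierstrass points, which decomposes as $\mathbf{1} \oplus \mathrm{std}_{2g+1}$, with $\mathrm{std}_{2g+1}$ the $(2g+1)$-dimensional standard irreducible. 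The problem then reduces to showing (i) $X_\omega \supseteq \mathrm{std}_{2g+1}$ and (ii) $X_\omega \not\supseteq \mathbf{1}$, i.e.\ $\sum_i [q_i] = 0$.

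Step (i) is the easier half: by Theorem \ref{hyp johnson and normal function for Z}, $[q_2]-[q_1]=(g+1)\pi_\cZ$, and Colombo's extension of Collino's nontriviality result gives $\pi_\cZ \neq 0$. Hence $[q_2]-[q_1]\neq 0$; since $\mathrm{std}_{2g+1}$ is irreducible and has the differences $[q_i]-[q_j]$ as an $S_{2g+2}$-orbit, the full standard summand embeds in $X_\zeta \subseteq X_\omega$. Combined with (ii), this will yield $X_\zeta = X_\omega = \mathrm{std}_{2g+1}$ of dimension $2g+1$, realising the Collino classes as exactly the Weierstrass classes modulo the single relation $\sum_i [q_i]=0$.

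Step (ii) is where I expect the main obstacle. The approach I would take is via the Hochschild--Serre spectral sequence for $1\to \Delta_g[2]\to \Delta_g\to S_{2g+2}\to 1$, whose five-term exact sequence
\begin{equation*}
0 \to H^1(S_{2g+2}, V^{\Delta_g[2]}) \to H^1(\Delta_g, V) \to H^1(\Delta_g[2], V)^{S_{2g+2}} \to H^2(S_{2g+2}, V^{\Delta_g[2]})
\end{equation*}
should identify $H^1(\Delta_g[2], V)^{S_{2g+2}}$ with a subquotient of $H^1(\Delta_g, V)$, once one verifies $V^{\Delta_g[2]}=0$ (a representation-theoretic check on $\Sp$). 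Since $\sum_i[q_i]$ is manifestly $S_{2g+2}$-invariant, the task reduces to computing $H^1(\Delta_g, V)$ and showing the image of this sum there vanishes. I would attempt this either by a direct computation with the Colombo formulas for $\tilde\tau^\hyp_q$ already recalled in the paper, or by a Morita--Hain-style rigidity argument ruling out nontrivial crossed homomorphisms of $\Delta_g$ into $V$. This vanishing is the technical heart of the argument.
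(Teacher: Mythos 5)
Your overall architecture matches the paper's: both arguments exploit the $\Aut W\cong\SS_{2g+2}$ action on $H^1(\Delta_g[2],V)$ coming from the extension $1\to\Delta_g[2]\to\Delta_g\to\Aut W\to 1$, and both reduce the identity $X_\zeta=X_\omega$ to the single relation $\sum_{i=1}^{2g+2}[q_i]=0$, i.e.\ to the vanishing of $H^1(\Delta_g,V)\cong H^1(\Delta_g[2],V)^{\Aut W}$. Your step (i) is actually a cleaner route to $\dim X_\zeta=2g+1$ than the paper's: the paper runs an explicit induction on a putative relation $\sum c_i\zeta_i=0$, averaging over a $(2g+1)$-cycle in $\Aut_pW$ and evaluating on Dehn twists to force $\sum c_i=0$, whereas you simply observe that $X_\zeta$ is the image of the irreducible standard representation under an $\SS_{2g+2}$-equivariant map, hence is $0$ or of dimension $2g+1$, and nontriviality of a single Collino class (which the paper gets from the explicit monodromy computation $\pi_\cZ(d_i)=4\theta_i''$, Theorem~\ref{dehn image normal function}, rather than from Collino's generic nontriviality) rules out $0$. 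The equivariance of $q\mapsto[q]$ that this requires is the same naturality the paper uses when it writes $t^j(\tilde\tau^\adj_{\zeta_i})=\tilde\tau^\adj_{t^j(\zeta_i)}$, so that is not an extra cost. (Your worry about $V^{\Delta_g[2]}$ is moot: since $\SS_{2g+2}$ is finite and $V$ is a $\Q$-vector space, both outer terms of the five-term sequence vanish regardless, which is exactly Proposition~\ref{hyp cohomology}.)

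The genuine gap is step (ii). You correctly identify $\sum_i[q_i]=0$ as equivalent to the vanishing of the image of this class in $H^1(\Delta_g,V)$, and you correctly call it the technical heart — but you then only gesture at two possible strategies (``a direct computation with the Colombo formulas'' or ``a Morita--Hain-style rigidity argument'') without carrying either out. This is not a routine verification: the paper does not prove it either, but imports it as an external input, namely Tanaka's computation of $H^1$ of the hyperelliptic mapping class group with twisted coefficients, which gives $H^1(\Delta_g,V_{[1^2]})=0$. Note that a naive appeal to ``$\sigma$ acts as $-\id$'' fails here, since $\sigma$ acts as $+\id$ on $\Lambda^2H/\langle\theta\rangle$; one genuinely needs the computation of $H^1(\G_{0,2g+2},V)$ behind Tanaka's result. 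Without this input your argument establishes $\dim X_\zeta=2g+1$ and $X_\omega\in\{X_\zeta,\;X_\zeta\oplus\Q\!\cdot\!\sum_i[q_i]\}$, but not the equality $X_\zeta=X_\omega$.
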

In fact, the $2g+2$ Weierstrass classes satisfy a single equation 
$\sum_{i=1}^{2g+2}[q_i] =0$, and each class $(2g+2)[q_i]$ is an integral combination of $(2g+1)$ Collino classes (see Remark \ref{main remark}). From the Teichm\"uller theory, it is known that, for $g =2$ by Hubbard in \cite{hub} and for $g > 2$ by Earle and Kra in \cite{EaKr}, the universal curve over a branch of the hyperelliptic locus in the Teichm\"uller space  admits exactly $2g+2$ Weierstrass sections.  Our result implies that  Weierstrass sections satisfy an algebraic relation via the hyperelliptic Johnson homomorphisms. On the other hand, the tautological sections of the universal curve yield linearly independent classes in $H^1(\G_{g,n}, H_1(S, \Q))$ (cf. \cite[Prop.~12.1]{hai5}). It is of our interest to investigate further how the relative completion of the level 2 hyperelliptic mapping class group $\Delta_g[2]$ determines the Weierstrass sections (cf. \cite{wat}). 
\bigskip

{\it Acknowledgments:} We are grateful to Richard Hain for helpful discussions on the relative completions of hyperelliptc mapping class groups and letting us use a part of an unpublished notes on the hyperelliptic mapping class groups \cite{hkw}. 


\section{Mapping class groups}\label{mapping class group}
Fix a smooth, compact oriented surface $S$ of genus $g$ and a subset $P$ of $S$ consisting of  $n$ distinct points of $S$.
Assume that $2g-2+n>0$. The mapping class group $\G_{S, P}$ is 
defined to be the group of isotopy classes of 
orientation-preserving diffeomorphisms of $S$ that fix $P$ pointwise. 
By the classification of surfaces, $\G_{S,P}$ is independent of a choice of the pair $(S, P)$, and hence we denote $\G_{S, P}$ by $\G_{g,n}$. 
When $n=0$, we denote $\G_{g,0}$ by $\G_g$. 
\subsection{Level structures} Denote $H_1(S;R)$ by $H_R$ where $R$ is a commutative ring. Denote the algebraic intersection 
pairing on $H_R$ by $\langle~,~\rangle:H_R^{\otimes 2} \to R$. It is a unimodular symplectic form. Set 
$$ 
\Sp(H_R) = \Aut(H_R,\langle~,~\rangle). 
$$ 

Fixing a symplectic basis $a_1,\dots,a_g,b_1,\dots,b_g$ of $H_R$ gives 
an isomorphism $\Sp(H_R)$ with the classical symplectic group $\Sp_g(R)$ consisting of
$2g\times 2g$ symplectic matrices. 
The principal congruence subgroup $\Sp(H_\Z)[m]$ of $\Sp(H_\Z)$ of level $m\in 
\Z$ is the kernel of the reduction mod $m$ mapping: 
$$ 
\Sp(H_\Z)[m] := \ker\{\Sp(H_\Z) \to \Sp(H_{\Z/m\Z})\}. 
$$ 
Fix a point $q$ in $S$. The action of $\G_{g,n}$ on $\pi_1(S, q)$ induces an action on $H_\Z$ that preserves the intersection pairing. Therefore, there is a representation 
$$ 
\rho : \G_{g,n} \to \Sp(H_\Z). 
$$ 
This is well known to be surjective \textcolor{black}{when $R=\Z$} (e.g. \cite[Thm.~6.4]{FaMa}). 
For each integer $m\ge 0$, we define the level $m$ subgroup of $\G_{g,n}$ to be 
the kernel of the reduction of $\rho$ mod $m$: 
$$ 
\G_{g,n}[m] = \ker\{\G_{g,n} \to \Sp(H_{\Z/m\Z})\}. 
$$ 
When $m=1$, we omit the level notation, so $\G_{g,n}[1] = \G_{g,n}$. \\
The Torelli group $T_{g,n}$ is defined to be the kernel of $\rho$ and it is the 
the level $0$ subgroup of $\G_{g,n}$: 
$$ 
T_{g,n} = \G_{g,n}[0] = \ker\{\G_{g,n} \to \Sp(H_{\Z})\}. 
$$ 
The Torelli groups are torsion free (e.g. \cite[Thm.~6.12]{FaMa}).  Since 
$\Sp(H_\Z)[m]$ is torsion free for all $m\ge 3$, it follows that $\G_{g,n}[m]$ 
is torsion free for all $m\ge 3$.

\section{Hyperelliptic mapping class groups}
The content of this section comes from an unpublished notes on the completions of hyperelliptic mapping class groups \cite{hkw}. 

Fix a hyperelliptic involution $\sigma:S \to S$.
It is an orientation-preserving diffeomorphism of order 2 of $S$ with exactly $2g + 2$ fixed points, which we call Weierstrass points. 
 The quotient space $S/\langle \sigma \rangle$ is a sphere by the Riemann-Hurwicz formula. Therefore, it then follows that all hyperelliptic involutions are conjugate in the group of orientation preserving diffeomorphisms of $S$, denoted by $\mathrm{Diff}^+S$. \\
\indent An orientation-preserving diffeomorphism of $S$ is said to be {\it symmetric} if it commutes with $\sigma$. The hyperelliptic mapping class group, denoted by $\Delta_g$, is defined to be the group of isotopy classes of orientation-preserving symmetric diffeomorphisms of $S$:
$$\Delta_g :=\pi_0(\text{centralizer of }\sigma \text{ in }\mathrm{Diff}^+S)$$
The following result by Birman and Hilden allows us to consier $\Delta_g$ as a subgroup of $\G_g$. 
\begin{theorem}[Birman-Hilden \cite{birman-hilden}] 
The natural homomorphism $\D_g \to \G_g$ is injective. Its image is the 
centralizer of the isotopy class of $\sigma$ in $\G_g$. 
\end{theorem}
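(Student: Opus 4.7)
The plan is to split the assertion into two parts: the natural map lands in the centralizer of $[\sigma]$ and is injective, and its image is all of the centralizer. The first containment ``image $\subseteq$ centralizer'' is immediate: by definition an element of $\Delta_g$ is represented by a diffeomorphism $\varphi$ satisfying $\varphi\sigma=\sigma\varphi$ on the nose, hence $[\varphi]$ and $[\sigma]$ commute in $\G_g$.

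To identify the image with the full centralizer, I would start from a class $[\psi]\in\G_g$ with $[\psi][\sigma]=[\sigma][\psi]$, and produce a symmetric representative. The relation gives $\psi\sigma\psi^{-1}\simeq\sigma$, so $\psi\sigma\psi^{-1}$ is an involution isotopic to $\sigma$. I would invoke a Nielsen realization type argument (equivalently, the fact that the $\sigma$-fixed locus of Teichm\"uller space $\T_g$ is non-empty and that $\sigma$ is unique up to conjugation in $\Diff^+S$) to produce a diffeomorphism $h$ isotopic to the identity with $h(\psi\sigma\psi^{-1})h^{-1}=\sigma$. Then $\psi':=h\psi$ lies in the same class as $\psi$ and commutes with $\sigma$, so $[\psi']\in\Delta_g$ maps to $[\psi]$.

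For injectivity, suppose $\varphi$ is a symmetric diffeomorphism that is isotopic to $\id_S$ in $\Diff^+S$; I have to show the isotopy can be chosen symmetric. The cleanest route is through Teichm\"uller theory, which is essentially the modern Birman--Hilden argument. Pick a hyperbolic metric on $S$ for which $\sigma$ is an isometry; this is possible since the quotient orbifold $S/\langle\sigma\rangle$ carries a hyperbolic structure whose pullback is $\sigma$-invariant. The fixed locus $\T_g^\sigma\subset\T_g$ is then a non-empty, contractible subspace that can be identified with the Teichm\"uller space of the quotient orbifold $S/\langle\sigma\rangle$ (the sphere with $2g+2$ cone points), and $\Delta_g$ acts on it properly discontinuously. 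A symmetric diffeomorphism isotopic to $\id_S$ acts trivially on $\T_g$ (since $\G_g$ acts faithfully on $\T_g$ for $g\ge 2$), hence trivially on $\T_g^\sigma$, hence equals the identity in $\Delta_g$; the case $g=1$ is handled separately by a direct argument on the torus.

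The main obstacle is the injectivity step: the difficulty is that an ambient, not necessarily symmetric, isotopy has no reason a priori to be deformable through symmetric diffeomorphisms. My approach sidesteps this by trading isotopy for an action on Teichm\"uller space, where the symmetric structure is encoded as the fixed locus $\T_g^\sigma$; the needed input is the contractibility of $\T_g^\sigma$ and faithfulness of the $\G_g$-action, both of which are standard. The alternative, more classical route is to lift and descend along the branched cover $S\to S/\langle\sigma\rangle$ and argue that isotopies of the sphere with $2g+2$ marked points lift to symmetric isotopies of $S$; this is essentially a restatement of the original Birman--Hilden lemma and could be cited directly.
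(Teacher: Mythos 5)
The paper does not actually prove this statement; it is quoted from Birman--Hilden and used as a black box, so the only question is whether your sketch would stand on its own. The easy containment and the surjectivity step are fine: that isotopic finite-order diffeomorphisms of a surface are conjugate by a diffeomorphism isotopic to the identity is a standard consequence of Nielsen realization together with the connectedness/contractibility of the fixed locus in Teichm\"uller space, and applying it to $\sigma$ and $\psi\sigma\psi^{-1}$ does produce a symmetric representative of $[\psi]$.

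The injectivity step, however, has a genuine gap. From ``$\varphi$ acts trivially on $\T_g^\sigma$'' you conclude ``$[\varphi]=1$ in $\Delta_g$'', but the action of $\Delta_g$ on $\T_g^\sigma\cong X_{0,2g+2}$ is not faithful: $\sigma$ itself lies in the kernel, and, more to the point, identifying the kernel of $\Delta_g\to\Aut(\T_g^\sigma)$ (equivalently, of the descent map to $\pi_0$ of the diffeomorphisms of $(S^2,W)$) as exactly $\langle[\sigma]\rangle$ is precisely the content of the Birman--Hilden isotopy-lifting lemma --- the extension $1\to\langle\sigma\rangle\to\ker\rho_W\to\G_{0,2g+2}\to1$ that the paper records immediately after the theorem. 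So your primary route is circular at exactly the hard point: knowing that a symmetric diffeomorphism which acts trivially on the fixed locus is \emph{symmetrically} isotopic to $\id$ or $\sigma$ is the theorem, not an available input. (A secondary slip: $\G_2$ does not act faithfully on $\T_2$, since the hyperelliptic class acts trivially there; this happens not to matter because $\varphi\simeq\id$ acts trivially regardless.) The correct repair is the ``alternative'' you mention only in passing: given an isotopy of the quotient sphere rel the $2g+2$ branch points, lift it through the branched double cover to a symmetric isotopy of $S$ ending at a deck transformation, then rule out $\sigma$ using its action as $-\id$ on $H_1(S)$. That lifting argument must actually be carried out (or cited, as the paper does); it cannot be replaced by the Teichm\"uller-space action alone.
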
 

\subsection{Level 2 hyperelliptic mapping class group}\label{level 2 hyp map class fixing w points}
  Denote the set of fixed points of  $\sigma$ by $W$. The action of $\Delta$ on $W$ yields a homomorphism $\rho_W : \Delta_g \to  \mathrm{Aut}W$. It follows from \cite[Thm.~1]{birman-hilden} that the homomorphism $\rho_W$ is surjective and that there is 
an extension:
\begin{equation}
\label{extn}
1 \to \langle \sigma \rangle \to \ker \rho_W \to \G_{0,2g+2} \to 1.
\end{equation}
To identify the kernel, we need to recall the connection between labeling of
the Weierstrass points and level 2 structures on hyperelliptic curves. (Cf.\
\cite[p.~3.31]{mumford}.)

Denote the $\F_2$-valued characteristic function $W \to \F_2$ of a subset $T$
of the set of Weierstrass points by $e_T$. There is an $\F_2$ linear mapping
$$
q : \{e_T : T \subseteq W,\ |T| \text{ is even}\} \to H_1(S,\F_2)=H_{\F_2}.
$$
It is defined as follows: let $f : S \to S^2$ be the 2-to-1 mapping whose
Galois group is generated by $\sigma$. This induces an isomorphism of $W$ with
the set of critical values of $f$. If $T$ is an even subset of $W$ then there
is a 1-chain $\gamma$ in $S^2$ whose mod 2 boundary is $f(T)$. Any two such
chains differ by a boundary mod 2. Define $q(e_T)$ to be the class in
$H_1(S,\F_2)$ of $f^{-1}(\gamma)$. Its homology class is well defined mod 2.

\begin{lemma}
The mapping $q$ induces a linear isomorphism
$$
\overline{q} :
\{e_T : T\subseteq W,\ |T| \equiv 0 \bmod 2\}/
\{e_T - e_{T^c}: |T| \equiv 0 \bmod 2\}
\to H_{\F_2}
$$
where $T^c$ denotes the complement $W-T$ of $T$. \qed
\end{lemma}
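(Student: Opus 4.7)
The plan is to verify in turn that $q$ is well-defined, $\F_2$-linear, vanishes on $e_W$ (so descends to $\overline{q}$), and that $\overline{q}$ is then an isomorphism by a dimension count together with surjectivity.

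\textbf{Well-definedness and linearity.} If two $1$-chains $\gamma, \gamma'$ in $S^2$ both have boundary $f(T)$ modulo $2$, their difference is a $1$-cycle; since $H_1(S^2, \F_2) = 0$, we may write $\gamma - \gamma' = \partial D$ for a $2$-chain $D$. Away from the finite branch set $f(W)$ the map $f$ is a trivial local double cover, so $f^{-1}$ carries chains to chains and commutes with $\partial$ modulo $2$; hence $f^{-1}(\gamma) - f^{-1}(\gamma') = \partial f^{-1}(D)$ is a boundary, and the class $[f^{-1}(\gamma)] \in H_{\F_2}$ is independent of $\gamma$. Linearity over $\F_2$ follows from $e_{T_1 \bigtriangleup T_2} = e_{T_1} + e_{T_2}$ together with the additivity of $f^{-1}$ on chains.

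\textbf{Descent to $\overline{q}$.} In characteristic $2$ we have $e_T + e_{T^c} = e_W$, so the subspace to quotient by is $\F_2 \cdot e_W$, and it suffices to show $q(e_W) = 0$. Choose $\gamma$ to be $g+1$ pairwise disjoint arcs pairing up the $2g+2$ points of $f(W)$, and set $\Sigma := S^2 \setminus \gamma$. A small loop in $\Sigma$ encircling a single arc surrounds two branch points, so its monodromy in the double cover $f$ is $(-1)^2 = +1$; such loops generate $H_1(\Sigma, \F_2)$, so the restricted cover $f^{-1}(\Sigma) \to \Sigma$ is trivial. Writing $f^{-1}(\Sigma) = \Sigma_1 \sqcup \Sigma_2$ and taking $\Sigma_1$ as a $2$-chain in $S$, its boundary is exactly $f^{-1}(\gamma)$, which therefore vanishes in $H_{\F_2}$.

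\textbf{Isomorphism.} The even-weight subspace has $\F_2$-dimension $2g+1$, so its quotient by $\F_2 \cdot e_W$ has dimension $2g = \dim_{\F_2} H_{\F_2}$. For surjectivity, order the Weierstrass points $q_1, \ldots, q_{2g+2}$ along a smooth arc in $S^2$ and take cuts $\gamma_i$ joining $f(q_i)$ and $f(q_{i+1})$; the classes $[f^{-1}(\gamma_i)] = q(e_{\{q_i, q_{i+1}\}})$ realize the standard symplectic basis of $H_1(S, \F_2)$ for a hyperelliptic curve (cf.\ \cite[p.~3.31]{mumford}), so $\overline{q}$ is surjective. A surjection between equidimensional $\F_2$-vector spaces is an isomorphism.

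The main obstacle is the vanishing $q(e_W) = 0$; the sheet-splitting argument above is the natural geometric realization, recording the fact that the branched double cover becomes trivial once a maximal system of disjoint branch cuts has been removed.
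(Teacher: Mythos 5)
Your argument is correct. Note that the paper gives no proof of this lemma at all --- it is stated with a \qed and a pointer to Mumford's Tata Lectures II, p.~3.31 --- so there is nothing to compare against; what you have written is the standard branch-cut argument, and all the essential points are in place: linearity via $e_{T_1}+e_{T_2}=e_{T_1\triangle T_2}$, the identification of the subspace being killed with $\F_2\cdot e_W$, the sheet-splitting proof that $q(e_W)=0$, and the dimension count $\left(2g+1\right)-1=2g=\dim_{\F_2}H_{\F_2}$. Two small points of hygiene: in the well-definedness step the $2$-chain $D$ cannot be assumed to avoid the branch locus, so you are implicitly using the mod-$2$ transfer for the \emph{branched} cover (harmless, since near a branch point $z\mapsto z^2$ still pulls a disk back to a disk with matching boundary, and the paper already asserts well-definedness in its definition of $q$); and the $2g+1$ classes $[f^{-1}(\gamma_i)]$ coming from the chain of consecutive cuts are not literally a symplectic basis --- they satisfy one relation --- but their span supports an intersection form of rank $2g$, hence they generate $H_{\F_2}$, which is all that surjectivity requires.
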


The intersection pairing on $H_{\F_2}$ corresponds to the pairing
$$
e_S \cdot e_T = \# (S\cap T) \bmod 2
$$
on the even subsets of $W$. Thus every automorphism of $W$ induces a symplectic
automorphism of $H_{\F_2}$.

\begin{corollary}
There is a natural faithful representation $\Aut W \hookrightarrow
\Sp(H_{\F_2})$.
\end{corollary}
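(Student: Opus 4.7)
\begin{para}
\textbf{Proof plan.}
The construction of the map is essentially done in the sentence preceding the corollary: an automorphism $\pi\in\Aut W$ permutes subsets of $W$ while preserving cardinality mod $2$, symmetric difference, and the pairing $e_S\cdot e_T=\#(S\cap T)\bmod 2$. The plan is therefore to (i) check that this construction descends through the quotient in the previous lemma to give a well-defined symplectic automorphism of $H_{\F_2}$, (ii) observe that the assignment is a group homomorphism, and (iii) verify injectivity.
\end{para}

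\begin{para}
For (i), an element $\pi\in\Aut W$ sends even subsets to even subsets and commutes with complementation ($\pi(T^c)=\pi(T)^c$), so it induces a well-defined $\F_2$-linear automorphism of the left-hand side of the isomorphism $\overline q$ from the preceding lemma; transporting through $\overline q$, we obtain an automorphism $\rho(\pi)\in\Aut(H_{\F_2})$. Because $\#(\pi(S)\cap\pi(T))=\#(S\cap T)$, the pairing on even subsets is preserved, hence $\rho(\pi)\in\Sp(H_{\F_2})$. Step (ii) is a formal functoriality check: $\rho(\pi_1\pi_2)=\rho(\pi_1)\rho(\pi_2)$ because $\pi\mapsto$ ``permutation of characteristic functions'' is already a homomorphism before descent.
\end{para}

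\begin{para}
The main content is faithfulness. Suppose $\pi\in\Aut W$ lies in $\ker\rho$. Then for every even subset $T\subseteq W$, the class of $e_{\pi(T)}$ equals the class of $e_T$ in the quotient of the lemma, which forces
\[
\pi(T)=T\quad\text{or}\quad\pi(T)=T^c.
\]
Under the standing assumption $g\ge 2$ (so $|W|=2g+2\ge 6$), apply this to an arbitrary two-element subset $T=\{w,w'\}$: since $|T^c|=2g\ge 4\ne 2=|T|$, the alternative $\pi(T)=T^c$ is impossible, hence $\pi(\{w,w'\})=\{w,w'\}$ for every pair. Fixing $w\in W$ and varying $w'$: if $\pi(w)\ne w$, pick $w'\in W\setminus\{w,\pi(w)\}$ (possible because $|W|\ge 4$), and then $\pi(w)\notin\{w,w'\}=\pi(\{w,w'\})$, a contradiction. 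Hence $\pi(w)=w$ for all $w\in W$, so $\pi=\id$.
\end{para}

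\begin{para}
The only potential obstacle is the faithfulness step, and even this reduces to the elementary combinatorial observation above once one restricts to two-element subsets; the choice of two-element subsets is dictated by the requirement $|T|\ne|T^c|$, which is where the hypothesis $g\ge 2$ enters.
\end{para}
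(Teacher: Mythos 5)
Your proof is correct and follows exactly the construction the paper sets up in the two paragraphs preceding the corollary (inducing the action through $\overline q$ and checking the pairing $e_S\cdot e_T=\#(S\cap T)\bmod 2$ is preserved); the paper leaves the faithfulness verification implicit, and your argument via two-element subsets supplies it cleanly. Your observation that $g\ge 2$ is genuinely needed is a good one: for $g=1$ one has $|W|=4$ and the map $\SS_4\to\Sp(H_{\F_2})\cong\SS_3$ has the Klein four-group as kernel, so the corollary would fail there.
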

Denote the restriction of $\G_g$ to $\Delta_g$ by 
$$
\rho^\hyp:\Delta_g\to \Sp(H_\Z).
$$
For each $m \geq 0$, we define the level $m$ subgroup $\Delta_g[m]$ of $\Delta_g$ by the reduction of $\rho^\hyp$ mod $m$:
$$
\Delta_g[m] = \ker(\Delta_g\to \Sp(H_{\Z/m\Z})).
$$
\begin{remark} 
Since every curve of genus 2 is hyperelliptic, $\G_2[m] \cong \D_2[m]$ for all 
$m\ge 0$. 
\end{remark} 
\begin{corollary}
The image of the natural homomorphism $\D_g \to \Sp(H_{\F_2})$ is $\Aut W$.
Consequently, the kernel of $\rho_W : \D_g \to \Aut W$ is $\D_g[2]$. \qed
\end{corollary}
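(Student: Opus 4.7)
The plan is to show that the mod-$2$ reduction $\bar\rho^\hyp \colon \Delta_g \to \Sp(H_{\F_2})$ factors as the composition
\[
\Delta_g \xrightarrow{\rho_W} \Aut W \hookrightarrow \Sp(H_{\F_2}),
\]
the second map being the faithful embedding of the preceding corollary. Granting this factorization, the first assertion follows at once from surjectivity of $\rho_W$ (a consequence of the extension \eqref{extn}), while the second follows because $\Delta_g[2] = \ker \bar\rho^\hyp$ by definition and injectivity of $\Aut W \hookrightarrow \Sp(H_{\F_2})$ forces $\ker \bar\rho^\hyp = \ker \rho_W$.

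The content of the argument is therefore a single naturality statement for the construction of $q$: for every symmetric diffeomorphism $\phi$ of $S$ and every even subset $T \subseteq W$,
\[
\phi_*\bigl(q(e_T)\bigr) \;=\; q\bigl(e_{\rho_W(\phi)(T)}\bigr) \quad \text{in } H_{\F_2}.
\]
To verify this I would unpack the definition of $q$: since $\phi$ commutes with $\sigma$, it descends to a diffeomorphism $\bar\phi$ of $S^2 = S/\langle\sigma\rangle$ satisfying $f\circ\phi = \bar\phi\circ f$, and $\bar\phi$ permutes the set of critical values of $f$ according to $\rho_W(\phi)$. If $\gamma$ is a $1$-chain in $S^2$ with $\partial\gamma \equiv f(T)\bmod 2$, then $\bar\phi(\gamma)$ has boundary $f(\rho_W(\phi)(T))\bmod 2$, and the identity $\phi(f^{-1}(\gamma)) = f^{-1}(\bar\phi(\gamma))$ yields the required equality of mod-$2$ homology classes. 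Passing through the isomorphism $\overline{q}$ of the preceding lemma, this says exactly that $\bar\rho^\hyp(\phi)$ acts on $H_{\F_2}$ as the symplectic automorphism induced by $\rho_W(\phi) \in \Aut W$.

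The only real (and minor) obstacle is bookkeeping at the level of isotopy classes: one must choose a symmetric representative $\phi$ of each class in $\Delta_g$ in order to speak of the descent $\bar\phi$, but this is unproblematic because the resulting action on $H_{\F_2}$ depends only on the isotopy class. Modulo the equivariance check above, both assertions of the corollary are a formal diagram chase using surjectivity of $\rho_W$ and injectivity of the embedding $\Aut W \hookrightarrow \Sp(H_{\F_2})$.
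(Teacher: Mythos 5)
Your proposal is correct and follows essentially the route the paper intends: the corollary is stated with an omitted proof precisely because it is meant to follow from the preceding lemma and the faithful embedding $\Aut W \hookrightarrow \Sp(H_{\F_2})$, and the equivariance $\phi_*(q(e_T)) = q(e_{\rho_W(\phi)(T)})$ that you verify via descent to $S^2$ is exactly the implicit content, combined with the surjectivity of $\rho_W$ (which the paper derives from Birman--Hilden rather than from the extension \eqref{extn}, a negligible attribution point). Nothing further is needed.
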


One therefore has extensions
\begin{align}
\label{extensions}
\begin{CD}
1 @>>> \langle \sigma \rangle @>>> \D_g[2] @>>> \G_{0,2g+2} @>>> 1
\cr
1 @>>> \D_g[2] @>>> \D_g @>>> \Aut W @>>> 1
\end{CD}
\end{align}

Since $\G_g[m]$ is torsion free for all $m\ge 3$, the same is true for 
$\D_g[m]$. The group $\D_g[0]$ is the kernel of $\rho^\hyp$. We shall call it the {\it hyperelliptic Torelli group} and denote 
it by $T\D_g$.

\subsection{Image of $\rho^\hyp$}
Consider $\Aut W$ as a subgroup of $\Sp(H_{\Z/2\Z})$ via the above faithful representation.  Denote the inverse image of $\Aut W$ under the reduction map $\Sp(H_\Z)\to \Sp(H_{\Z/2\Z})$ by $G_g$. 
We have the following diagram.
$$ 
\begin{CD} 
G_g @>>> \Sp(H_\Z) \cr 
@VVV @VVV \cr 
\Aut W @>>> \Sp(H_{\Z/2\Z}) 
\end{CD} 
$$
Here we recall the following result of A'Campo (see also \cite[\S8, Lemma 8.12]{mumford}).

\begin{theorem}[{\cite{acampo}}] \label{image of hyp mapping class group}
If $g\ge 2$, then the image of $\rho^\hyp : \D_g \to \Sp(H_\Z)$ is $G_g$. In 
particular, the image of the restriction of $\rho^\hyp$ to $\D_g[2]$ is $\Sp(H_\Z)[2]$. 
\end{theorem}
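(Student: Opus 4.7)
The plan is first to check the containment $\rho^\hyp(\D_g) \subseteq G_g$ and then to reduce the claim to the ``in particular'' surjection onto $\Sp(H_\Z)[2]$. The containment is automatic from the preceding corollary, which identifies the image of $\D_g$ in $\Sp(H_{\F_2})$ with $\Aut W$; by construction $G_g$ is the preimage of $\Aut W$ under $\Sp(H_\Z) \to \Sp(H_{\F_2})$. Placing the second row of (\ref{extensions}) over the defining extension $1 \to \Sp(H_\Z)[2] \to G_g \to \Aut W \to 1$, a snake-lemma argument (using that $\D_g \twoheadrightarrow \Aut W$ is already known to be surjective) shows that $\rho^\hyp(\D_g) = G_g$ is equivalent to $\rho^\hyp\bigl(\D_g[2]\bigr) = \Sp(H_\Z)[2]$.

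For the latter I would use the first row of (\ref{extensions}), $1 \to \langle\sigma\rangle \to \D_g[2] \to \G_{0,2g+2} \to 1$. The pure spherical mapping class group $\G_{0,2g+2}$ is generated by Dehn twists $T_\alpha$ along simple closed curves $\alpha \subset S^2 \setminus W$ that bound a disc containing exactly two branch points $p_i, p_j$ (the images of the standard pure braid generators $A_{ij}$). Since $\alpha$ avoids $W$, its preimage $f^{-1}(\alpha)$ is a pair of disjoint simple closed curves $\tilde\alpha_+, \tilde\alpha_-$ swapped by $\sigma$, and a symmetric lift of $T_\alpha$ is the commuting Dehn-twist product $T_{\tilde\alpha_+} T_{\tilde\alpha_-} \in \D_g[2]$. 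Since $\sigma_* = -\id$ on $H_\Z$ we have $[\tilde\alpha_-] = -[\tilde\alpha_+]$; setting $\beta := [\tilde\alpha_+]$, both twists induce the same Picard--Lefschetz transvection $T_\beta(x) = x + \langle x, \beta\rangle \beta$, and hence
$$
\rho^\hyp(T_{\tilde\alpha_+} T_{\tilde\alpha_-}) \;=\; T_\beta^2 \colon x \longmapsto x + 2\langle x, \beta\rangle\beta,
$$
which manifestly lies in $\Sp(H_\Z)[2]$. Using the core curve of the annular preimage of the disc, one identifies $\beta \bmod 2$ with $\overline q(e_{\{p_i,p_j\}})$, so as $\{p_i,p_j\}$ runs over all two-element subsets of $W$ the mod-$2$ reductions of these $\beta$'s span $H_{\F_2}$ (every even subset is a symmetric difference of pairs).

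The main obstacle is the final step: concluding that the family $\{T_\beta^2\}$ obtained in this way, together with the image $-\id$ of $\sigma$, generates all of $\Sp(H_\Z)[2]$. This is not a formal consequence of the previous paragraph, since a mod-$2$ spanning set of $\beta$'s does not automatically provide enough primitive integral vectors to generate the congruence subgroup. One must either choose a symplectic basis adapted to an explicit labeling of the Weierstrass points and verify by hand that the $T_\beta^2$ include A'Campo's standard generators of $\Sp(H_\Z)[2]$ as in \cite{acampo}, or else invoke the vanishing-cycle calculation carried out in \cite[\S 8, Lemma 8.12]{mumford}; this arithmetic input is the content of the theorem.
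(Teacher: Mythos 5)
Your reductions are all sound and are exactly what one would do: the containment $\rho^\hyp(\D_g)\subseteq G_g$ follows from the corollary identifying the mod~$2$ image with $\Aut W$; the five-lemma argument comparing the second row of (\ref{extensions}) with $1\to\Sp(H_\Z)[2]\to G_g\to\Aut W\to 1$ correctly shows the two surjectivity statements are equivalent; and the lifting analysis is right. (One cosmetic remark: since the preimage of a disc containing exactly two branch points is an annulus, your two curves $\tilde\alpha_\pm$ are in fact isotopic, both being isotopic to the core curve $c=f^{-1}(\gamma)$ with $[c]=q(e_{\{p_i,p_j\}})$; so the lift of $A_{ij}$ is the single squared twist $T_c^2$, whose symplectic image is the squared transvection $T_\beta^2$ you write down. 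This does not affect anything.)

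However, as you yourself say, the proof is not complete: everything above only shows $\rho^\hyp(\D_g[2])$ is the subgroup of $\Sp(H_\Z)[2]$ generated by $-\id$ and the elements $T_\beta^2$ for $\beta$ running over the classes $q(e_{\{p_i,p_j\}})$, and the assertion that this subgroup is \emph{all} of $\Sp(H_\Z)[2]$ is precisely the nontrivial arithmetic content of the theorem. A mod~$2$ spanning set of primitive vectors does not formally yield generation of the congruence subgroup, so this step genuinely requires the explicit generator computation of A'Campo (or the vanishing-cycle argument in Mumford, \S 8, Lemma 8.12). Note that the paper itself supplies no proof at this point either --- the theorem is stated as a citation of A'Campo --- so what you have produced is a correct reduction of the statement to the cited generation result rather than a self-contained argument; the one step you leave open is exactly the one the paper also outsources.
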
 




\subsection{Subgroups $\Delta_{g,n}$} 
Fixing a labeling $W \cong \{1,2,\dots,2g+2\}$ determines an isomorphism of $\Aut W$ with the symmetric group 
$\SS_{2g+2}$. 
Fix a Weierstrass point $q \in W$. Denote its stabilizer in $\Aut W$ by $\Aut_q W$.
This is isomorphic to the symmetric group $\SS_{2g+1}$. The group $\D_{g,1}$ is
defined by
$$
\D_{g,1} := (\rho_W)^{-1}(\Aut_q W).
$$
It contains $\D_g[2]$ and is an extension
\begin{equation}
\label{red-ext}
1 \to \D_g[2] \to \D_{g,1} \to \Aut_q W \to 1.
\end{equation}
More generally, let $Q$ be a subset of $W$ with $|Q| =n$. Denote the subgroup of $\Aut W$ fixing $Q$ pointwise by $\Aut_QW$. This group is isomorphic to the symmetric group $\SS_{2g+2-n}$. By abuse of notation, for any such subset $Q$ of $W$, the subgroup $\D_{g,n}$ of $\Delta_g$ is 
defined to be the inverse image of $\Aut_QW$ under $\rho_W$: 
$$ 
\D_{g,n} = (\rho_W)^{-1}(\Aut_Q W). 
$$ 
Similarly, there is an extension 
\begin{equation*} 
\label{red-ext for Q} 
1 \to \D_g[2] \to \D_{g,n} \to \Aut_Q W \to 1. 
\end{equation*} 
Similarly, denote the image of $\Delta_{g,n}$ under $\rho^\hyp$ in $\Sp(H_\Z)$ by $G_{g,n}$. 
Then, for $0\leq n \leq 2g+2$, there is a commutative diagram of extensions:
$$\xymatrix@R=1em@C=2em{
1\ar[r]&T\Delta_g\ar[r]\ar@{=}[d]&\Delta_{g,n}\ar[r]\ar[d]&G_{g,n}\ar[r]\ar[d]&1\\
1\ar[r]&T\Delta_g\ar[r]&\Delta_g\ar[r]&G_g\ar[r]&1.
}
$$
When $n = 2g+2$, we denote $\Delta_{g, 2g+2}$ by $\Delta_g[2]$. 
\subsection{Group cohomology of $\Delta_g$ and $\Delta_g[2]$}
Here we state basic results about cohomology of hyperelliptic mapping class groups $\Delta_g$ and $\Delta_g[2]$.
\begin{proposition}\label{hyp cohomology}
If  $V$ is a rational $\Delta_g$-module, then $H^\dot(\D_g[2],V)$
is an $\Aut W$-module and there are natural isomorphisms
$$
H^\dot(\D_g,V) \cong H^\dot(\D_g[2],V)^{\Aut W} 
\text{ and }H^\dot(\D_{g,1},V) \cong H^\dot(\D_g[2],V)^{\Aut_qW}.
$$
If $\sigma$ acts as $-\id$ on $V$, then $H^\dot(\D_g,V) = H^\dot(\D_g[2],V) = 0$.
If $\sigma$ acts trivially on $V$, then $V$ is a $\Gamma_{0,2g+2}$-module and
there is a natural isomorphism
$$
H^\dot(\D_g[2],V) \cong H^\dot(\G_{0,2g+2},V).
$$
\end{proposition}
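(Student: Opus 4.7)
The plan is to apply the Hochschild--Serre spectral sequence to the two extensions in (\ref{extensions}) together with (\ref{red-ext}), and to exploit the vanishing of positive-degree cohomology of a finite group with rational coefficients.

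For the first two isomorphisms, I would start from the extension $1 \to \D_g[2] \to \D_g \to \Aut W \to 1$ in (\ref{extensions}). Since $\D_g[2]$ is normal in $\D_g$ with quotient $\Aut W$, conjugation endows $H^\dot(\D_g[2], V)$ with an $\Aut W$-module structure in the standard way, and the associated Hochschild--Serre spectral sequence has
$$
E_2^{p,q} = H^p(\Aut W, H^q(\D_g[2], V)) \Longrightarrow H^{p+q}(\D_g, V).
$$
Because $\Aut W \cong \SS_{2g+2}$ is finite and $V$, hence each $H^q(\D_g[2], V)$, is a $\Q$-vector space, we have $H^p(\Aut W, -) = 0$ for $p > 0$. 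The spectral sequence therefore collapses onto its $q$-axis and yields $H^\dot(\D_g, V) \cong H^\dot(\D_g[2], V)^{\Aut W}$. Running the same argument with (\ref{red-ext}) in place of the first extension gives the $\Aut_q W$-invariant version.

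For the last two statements I would use the other extension of (\ref{extensions}),
$$
1 \to \langle \sigma \rangle \to \D_g[2] \to \G_{0,2g+2} \to 1.
$$
Since $\langle \sigma \rangle$ is finite of order $2$ and $V$ is rational, $H^q(\langle \sigma \rangle, V) = V^\sigma$ for $q=0$ and vanishes for $q>0$, so the Hochschild--Serre $E_2$-page concentrates on the bottom row as $E_2^{p,0} = H^p(\G_{0,2g+2}, V^\sigma)$. If $\sigma$ acts as $-\id$ on $V$ then $V^\sigma = 0$, the entire $E_2$ vanishes, and hence $H^\dot(\D_g[2], V) = 0$; applying the first part then gives $H^\dot(\D_g, V) = 0$ as well. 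If instead $\sigma$ acts trivially, then $V = V^\sigma$, and the spectral sequence degenerates to the desired $H^\dot(\D_g[2], V) \cong H^\dot(\G_{0,2g+2}, V)$.

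The one point that deserves a moment of care is checking that $V^\sigma$ is genuinely a $\G_{0,2g+2}$-module, not merely a projective or twisted one. This is immediate from the definition of $\D_g$ as the centralizer of $\sigma$: the subgroup $\langle \sigma \rangle$ is central in $\D_g[2]$, so the conjugation action of $\D_g[2]$ on $V^\sigma$ factors through $\G_{0,2g+2}$. After this observation the proof is a routine combination of Hochschild--Serre with the vanishing of $H^{>0}$ of a finite group acting on a $\Q$-vector space.
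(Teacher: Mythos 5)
Your argument is correct, and for the first and third assertions it is exactly the paper's: Hochschild--Serre applied to the extensions (\ref{extensions}) and (\ref{red-ext}), with the collapse forced by the vanishing of positive-degree rational cohomology of the finite quotients $\Aut W$ and $\Aut_q W$. Where you diverge is the vanishing statement when $\sigma$ acts as $-\id$. You obtain it as a byproduct of the spectral sequence of $1 \to \langle \sigma \rangle \to \D_g[2] \to \G_{0,2g+2} \to 1$, which in fact yields the stronger unified identification $H^\dot(\D_g[2],V) \cong H^\dot(\G_{0,2g+2},V^\sigma)$ covering both the $-\id$ case ($V^\sigma=0$) and the trivial-action case ($V^\sigma=V$) at once. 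The paper instead argues directly: since $\sigma$ is central in $\D_g$, it acts on $H^\dot(\D_g[2],V)$ both as $-\id$ (via its action on $V$) and as the identity (inner automorphisms act trivially on group cohomology), so multiplication by $2$ annihilates the cohomology, which is a $\Q$-vector space. Your route is slightly more economical in that one spectral sequence does double duty; the paper's central-element trick is more robust in that it needs no knowledge of the structure of $\ker\rho_W$ and would apply verbatim to $H^\dot(\D_g,V)$ without first passing through $\D_g[2]$. Your closing observation that the $\D_g[2]$-action on $V^\sigma$ factors through $\G_{0,2g+2}$ (because $\sigma$ is central and fixes $V^\sigma$ pointwise) is precisely the point that needs checking for the third isomorphism, and you handle it correctly.
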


\begin{proof}
The first assertions follow from the Hochschild-Serre spectral sequence of the
extensions (\ref{extensions}) and (\ref{red-ext}).
The second assertion follows from the fact that the centralizer of a group acts trivially on the cohomology.   Since $\sigma$
is central in $\D_g$, it acts as a $\D_g$-linear automorphism of $V$ as $-\id$, and
therefore of $H^\dot(\D_g,V)$ and $H^\dot(\D_g[2],V)$.  But since the centralizer acts trivially on
the cohomology of the group, it follows that $\sigma$ also acts trivially.  Thus multiplication by 2 annihilates both of these cohomology groups. Since $V$ is a rational representation, our claim follows. The third assertion follows from the
Hochschild-Serre spectral sequence of the extension (\ref{extn}).
\end{proof}





\section{Moduli spaces of hyperelliptic curves}
By a complex curve, or a curve for short, we shall mean a Riemann surface. Suppose that $2g-2+n>0$. Let $S$ be a reference surface of genus $g$ as in \S\ref{mapping class group} and $P$ a subset of $S$ consisting of $n$ points.  Denote the
Teichm\"uller space of marked, $n$-pointed, compact genus $g$ curves by
$X_{g,n}$. As a set $X_{g,n}$ is
\begin{center}
$\left\{
\parbox{2.5in}{orientation-preserving diffeomorphisms $f: S \to C$
to a complex curve}
\right\}$
\bigg/
\text{isotopies, constant on $P$}.
\end{center}
This is a contractible complex manifold of dimension $3g-3+n$. 
When $n =0$, $X_{g,0}$ is denoted by $X_g$.  
The mapping class group $\G_{g,n}$ acts on $X_{g,n}$ as a group of
biholomorphisms via its action on the markings:
$$
\phi : f \mapsto f\circ \phi^{-1},\quad \phi \in \G_{g,n},\ f\in X_{g,n}.
$$
This action is properly discontinuous and virtually free. The
isotropy group of $[(S,P) \to (C;x_1,\dots,x_n)]\in X_{g,n}$ is isomorphic to
the set of automorphisms of $C$ that fix $\{x_1,\dots,x_n\}$ pointwise.

For each $m\geq 0$,  the moduli space of $n$-pointed smooth projective curves of genus $g$ with a level $m$ structure is the quotient of
$X_{g,n}$ by the level $m$ subgroup of $\G_{g,n}$:
$$
\M_{g,n}[m] = \big(\G_{g,n}[m]\bs X_{g,n}\big)^\orb.
$$
It will be regarded as a complex analytic orbifold. It is a model of the
classifying space of $\G_{g,n}[m]$. When the group $\G_{g,n}[m]$ is torsion
free, $\M_{g,n}[m]$ is a smooth variety and also a fine moduli space for
$n$-pointed smooth projective curves of genus $g$ with a level $m$ structure.
This occurs, for example, when $m\ge 3$, $m=0$, or when $n>2g+2$. Note that $\M_{g,n}[1] = \M_{g,n}$ and that $\M_{g,n}[0]$ is the quotient of
Teichm\"uller space by the Torelli group $T_{g,n}$. It is known as {\it Torelli
space}. We shall denote it by $\T_{g,n}$.


Fix a hyperelliptic involution $\sigma$ of $S$. Let $Y_g$ be the set of points of $X_g$ fixed by $\sigma$:
$$ Y_g = X_g^\sigma$$
The set $Y_g$ consists of points $[f: S\to C]$ such that $f\circ\sigma \circ f^{-1}$ is an automorphism of $C$. It is connected and contractible of dimension $2g-1$, since it is biholomorphic to $X_{0, 2g+2}$. Note that $\Delta_g$ is the stabilizer of $Y_g$. For each $m\geq 0$, as an analytic orbifold, the moduli stack $\H_g[m]$ of smooth projective hyperelliptic curves of genus $g$ with a level $m$ structure is the orbifold quotient
$$
\H_g[m] = (\D_g[m]\backslash Y_g)^\orb.
$$
When $m=0$, $\H_g[0]$ is the hyperelliptic Torelli space corresponding to $\sigma$. In fact, the hyperelliptic locus of $X_g$ is the disjoint uniton of translates of $Y_g$. 
Since $Y_g$ is contractible, $\H_g[m]$ is a model of the classifying space of $\Delta_g[m]$. Hence we have an isomorphism 
$$
\pi_1^\orb(\H_g[m]) \cong \Delta_g[m],
$$
where $\pi_1^\orb$ denotes the orbifold fundamental group.
Similarly, for $\Delta_{g,n}$ with $0\leq n\leq 2g+2$,  we denote the orbifold quotient of $Y_g$ by $\Delta_{g,n}$ by
$$
\H_{g,n} = (\Delta_{g,n}\backslash Y_g)^\orb.
$$
Note that $\H_{g, 2g+2} =\H_g[2]$. The orbifold fundamental group $\pi_1^\orb(\H_{g,n})$ is isomorphic to $\Delta_{g,n}$. 
\subsection{Universal family over $\H_{g,n}$}
As an analytic orbifold, $\H_{g,n}$ admits the universal family $\pi_{g,n}:\cC_{\H_{g,n}}\to \H_{g,n}$. It is a proper smooth morphism of orbifolds. Since $n$ Weierstrass points are trivialized, the family $\pi_{g,n}$ admits $n$ distinct sections, which we call Weierstrass sections. Let $x=[C]$ be in $\H_{g,n}$. The fiber of $\pi_{g,n}$ over $x$ is $C$.  Let $q$ be a Weierstrass point of $C$.  Then associated to $\pi_{g,n}$, there is a homotopy exact sequence
\begin{equation}\label{homotopy sequence}
1\to \pi_1(C, q)\to \pi_1^\orb(\cC_{\H_{g,n}}, q)\to \pi_1^\orb(\H_{g,n}, x)\to 1.  
\end{equation}
Denote the fundamental group $\pi_1^\orb(\cC_{\H_{g,n}})$ by $\Delta^\cC_{g,n}$. The monodromy action of $\H_{g,n}$ on $C$ yields natural symplectic representations
$$
\rho_{g,n}:\Delta_{g,n}\to \Sp(H_\Z)
$$
 and 
$$
\rho^\cC_{g,n}:\Delta^\cC_{g,n}\to \Sp(H_\Z).
$$
By Theorem \ref{image of hyp mapping class group}, the images of $\rho_{g,n}$ and $\rho^\cC_{g,n}$ contain $\Sp(H_\Z)[2]$ and hence are Zariski dense in $\Sp(H_\Q)$. 
\section{Relative completions of hyperelliptic mapping class groups}

In this section, we review basics for hyperelliptic mapping class groups and relative completions, while setting up all the notations.


\subsection{Relative completion}
A detailed summary of relative completion can be found in \cite[\S3]{hai4}. 
Let $F$ be a field of characteristic zero. 
Suppose that
\begin{enumerate}[(i)]
    \item $\Gamma$ is a discrete group,
    \item $R$ is a reductive $F$-group, and
    \item $\rho:\Gamma \to R(F)$ is a Zariski-dense homomorphism.
    \end{enumerate}
The relative completion of $\Gamma$ with respect to $\rho$ consists of a proalgebraic $F$-group $\cG$ that is an extension 
\begin{equation}\label{extension for relative comp}
1\to \U\to \cG\to R\to 1
\end{equation}
of $R$ by a prounipotent $F$-group $\U$ and a Zariski-dense homomorphism $\tilde{\rho}:\Gamma \to \cG(F)$ such that the diagram
$$\xymatrix@R=1em@C=2em{
\Gamma \ar[d]_{\tilde{\rho}}\ar[dr]^{\rho}& \\
\cG(F) \ar[r] & R(F)
}
$$
commutes. 
It satisfies the following universal property. Let $G$ be a proalgebraic $F$-group that is an extension 
$$1\to U\to G\to R\to 1$$
of $R$ by a prounipotent $F$-group $U$. If $\rho_G:\Gamma\to G(F)$ is a homomorphism that lifts $\rho$, then there exists a unique homomorphism $\phi: \cG\to G$ such that the diagram
$$\xymatrix@R=1em@C=2em{
\Gamma \ar[d]_{\rho_G}\ar[r]^{\tilde{\rho}}& \cG(F)\ar[d]\ar[ld]_{\phi}\\
G(F) \ar[r] & R(F)
}
$$
commutes.

\subsection{Key properties}
Denote the Lie algebra of $\U$ by $\u$. It is a pro-nilpotent Lie algebra.
Relative completions are to some degree computable, since it is controlled by cohomology. The extension (\ref{extension for relative comp}) splits by a generalization of Levi's Theorem and any two splittings are conjugate by an element of $\U$. Therefore, the Lie algebra $\u$ is an $R$-module and there is an isomorphism
$$ \cG\cong R\ltimes \U \cong R\ltimes \exp\u$$
that is unique up to conjugation by an element of $\U\cong \exp\u$. The following result relates the Lie algebra $\u$ and the group cohomology of $\G$.
\begin{theorem}[{\cite[Thm.~3.8]{hai3}}]\label{iso for rel comp}
For all finite dimensional $R$-modules $V$, 
\begin{enumerate}[(i)]
\item there is a natural isomorphism
$$\Hom_R(H_1(\u), V) \cong H^1(\G, V), $$
and
\item there is a natural injection
$$\Hom_R(H_2(\u), V) \hookrightarrow H^2(\G, V).$$
\end{enumerate}
\end{theorem}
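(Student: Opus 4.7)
The plan is to compare the cohomology of the discrete group $\Gamma$ with the (continuous, rational) cohomology of the proalgebraic group $\cG$, and then compute the latter via the Hochschild–Serre spectral sequence for the extension $1 \to \U \to \cG \to R \to 1$. Throughout, $V$ is regarded as a $\cG$-module via the projection $\cG \to R$, so in particular $\U$ (and hence $\u$) acts trivially on $V$.

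The first main step is to show that the homomorphism $\tilde\rho : \Gamma \to \cG(F)$ induces an isomorphism $H^1(\cG, V) \xrightarrow{\sim} H^1(\Gamma, V)$ and an injection $H^2(\cG, V) \hookrightarrow H^2(\Gamma, V)$. For $H^1$, a class in $H^1(\Gamma, V)$ is represented by a crossed homomorphism $\phi : \Gamma \to V$, and $\gamma \mapsto (\rho(\gamma), \phi(\gamma))$ is a homomorphism $\Gamma \to R \ltimes V$ lifting $\rho$. Because $R \ltimes V$ is an extension of $R$ by the unipotent group $V$, the universal property of relative completion produces a unique morphism $\cG \to R \ltimes V$ through which this factors; this defines the inverse map. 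For $H^2$, a class in $H^2(\cG, V)$ corresponds to an extension $1 \to V \to \widetilde{\cG} \to \cG \to 1$, whose pullback along $\tilde\rho$ lives in $H^2(\Gamma, V)$. If this pullback vanishes, one has a splitting $\Gamma \to \widetilde{\cG}$ lifting $\tilde\rho$; now $\widetilde{\cG}$ is itself an extension of $R$ by a prounipotent group, so the universal property again extends this splitting to $\cG \to \widetilde{\cG}$, forcing the original extension to split. Zariski density of the image of $\tilde\rho$ is what makes these arguments rigid.

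The second main step is to identify $H^n(\cG, V)$ with $\Hom_R(H_n(\u), V)$ for $n = 1, 2$. The Hochschild–Serre spectral sequence for the split extension $\cG = R \ltimes \U$ with finite-dimensional rational coefficients has $E_2^{p,q} = H^p(R, H^q(\U, V))$. Reductivity of $R$ (together with the fact that $H^q(\U, V)$ is built from rational $R$-modules) forces $E_2^{p,q} = 0$ for $p > 0$, so the sequence collapses and $H^n(\cG, V) = H^n(\U, V)^R$. Since $\U$ acts trivially on $V$, the continuous cohomology $H^n(\U, V)$ is computed by the Chevalley–Eilenberg complex for the pro-nilpotent Lie algebra $\u$; by the universal coefficient identification $H^n(\u, V) \cong \Hom(H_n(\u), V)$ as $R$-modules, taking $R$-invariants gives $H^n(\cG, V) \cong \Hom_R(H_n(\u), V)$. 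Combining with the first step yields (i) and (ii).

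The main obstacle is the passage from discrete-group cohomology to the cohomology of the proalgebraic group $\cG$ in step one, particularly the $H^2$ injection: one must verify that the extensions produced by cocycles on $\cG$ really are of the sort (extension of $R$ by a prounipotent group) to which the universal property of $\cG$ applies, and that set-theoretic splittings lifting $\tilde\rho$ can be promoted to algebraic morphisms out of $\cG$. A further subtlety is the failure of surjectivity on $H^2$: extensions of $\Gamma$ by $V$ need not extend to $\cG$, which is exactly why the statement weakens to an injection at degree two.
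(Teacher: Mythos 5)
This theorem is quoted in the paper from Hain \cite[Thm.~3.8]{hai3} without proof, so there is no internal argument to compare against; your reconstruction is essentially the argument of the cited source. The two steps you give --- identifying $H^1(\Gamma,V)\cong H^1(\cG,V)$ and $H^2(\cG,V)\hookrightarrow H^2(\Gamma,V)$ via the universal property of relative completion (lifts to $R\ltimes V$ for degree one, splitting of extensions by prounipotent kernels for degree two, with Zariski density giving uniqueness), and then collapsing the Hochschild--Serre spectral sequence by reductivity of $R$ to reduce to $H^\bullet(\u,V)^R\cong\Hom_R(H_\bullet(\u),V)$ --- are exactly the standard proof, and your closing remarks correctly locate both the technical care needed (promoting set-theoretic data to algebraic morphisms) and the reason surjectivity fails in degree two.
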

\subsection{Relative completions of hyperelliptic mapping class groups}
Assume that $g \geq 2$. Recall that $G_{g,n}$ is the image of $\rho_{g,n}:\Delta_{g,n}\to \Sp(H_\Z)$. Note also that the hyperelliptic Torelli group $T\Delta_g$ is the kernel of $\rho_g =\rho_{g,0}$
and that $\ker \rho_{g,n} = T\Delta_g$ for each $n =1,\ldots, 2g+2$. The representation 
$\rho^\cC_{g,n}:\Delta^\cC_{g,n}\to\Sp(H_\Z)$ factors through $\rho_{g,n}$, and therefore has the same image $G_{g,n}$. With abuse of notation, we denote the maps restricted to this image also by $\rho_{g,n}$ and $\rho^\cC_{g,n}$.

Set $H =H_\Q$. Denote the relative completion of $\Delta_{g, n}$ with respect to $\rho_{g,n}$ by $\cD_{g, n}$ equipped with  $\tilde\rho_{g,n}:\Delta_{g,n} \to \cD_g(\Q)$.  There is an extension
\begin{equation}\label{rel extension}
1\to\U_{g,n}\to\cD_{g,n}\to \Sp(H)\to 1
\end{equation}
of $\Sp(H)$ by a prounipotent radical $\U_{g,n}$ of $\cD_{g,n}$. Denote the Lie algebras of $\cD_{g,n}$ and $\U_{g,n}$ by $\d_{g,n}$ and $\u_{g,n}$ respectively. When $n=2g+2$, we denote $\cD_{g,n}$, $\U_{g,n}$, $\d_{g,n}$, and $\u_{g,n}$ by $\cD_g[2]$, $\U_g[2]$, $\d_g[2]$, and $\u_g[2]$, respectively. The exact sequence \ref{rel extension} fits in the commutative diagram
$$
\xymatrix@R=2em@C=2em{
1\ar[r]&T\Delta_g\ar[r]\ar[d]_{\tilde\rho_{g,n}|_{T\Delta_g}}&\Delta_{g,n} \ar[r]\ar[d]^{\tilde\rho_{g,n}}&G_{g,n}\ar[r]\ar@{^{(}->}[d]&1\\
1\ar[r]&\U_{g,n}\ar[r]&\cD_{g,n}\ar[r]\ar[r]&\Sp(H)\ar[r]&1.
}
$$
Denote the prounipotent completion of $T\Delta_g$ over $\Q$ by $T\Delta^\un_{g/\Q}$. The right exactness of relative completions \cite[Prop. 3.7]{hai4} gives the exact sequence
$$
T\Delta^\un_{g/\Q}\to \cD_g\to \Sp(H)\to 1.
$$
This implies that the homomorphism $T\Delta^\un_{g/\Q}\to \U_g$ is surjective. Since the image of $T\Delta_g$ in $T\Delta^\un_{g/\Q}$ is Zariski dense, it follows that the image of $\tilde\rho_{g,n}|_{T\Delta_g}$ is Zariski dense.

Similarly, the relative completion of $\Delta^\cC_{g,n}$ with respect to $\rho^\cC_{g,n}:\Delta^\cC_{g,n}\to G_{g,n}$ is denoted by $\cD^\cC_{g,n}$ and $\tilde\rho^\cC_{g,n}:\Delta^\cC_{g,n}\to \cD^\cC_{g,n}(\Q)$ and its prounipotent radical by $\U^\cC_{g,n}$. Denote their Lie algebras by $\d^\cC_{g,n}$ and $\u^\cC_{g,n}$ respectively.

Denote the unipotent completion of $\pi_1(C, q)$ over $\Q$ and its Lie algebra by $\cP$ and $\p$, respectively. The center freeness of $\cP$ imply that the sequence \ref{homotopy sequence} gives exact sequences
\begin{equation}\label{rel homotopy seq}
1\to \cP\to \cD^\cC_{g,n}\to \cD_{g,n}\to 1
\end{equation}
 and 
\begin{equation}\label{rel lie homotopy seq}
0\to \p\to \d^\cC_{g,n}\to \d_{g,n}\to 0.
\end{equation}

\subsection{Mixed Hodge structures on the relative completions}
Let $V$ be the dual of $R^1\pi_{g,n\ast}\Z$. It is a polarized  variation of Hodge structure of weight $-1$. Its monodromy representation can be identified with $\rho_{g,n}$. By the main theorem of Hain in \cite{hai2}, the Lie algebras of the relative completions $\d_{g,n}$ and $\d^\cC_{g,n}$ admit natural $\Q$-MHSs, where their Lie brackets are morphisms of MHSs. The Lie algebra $\p$ admits a natural $\Q$-MHS, being the Lie algebra of the unipotent completion of $\pi_1(C, q)$. On the other hand, it also admits a $\Q$-MHS as the kernel of the surjection $\d^\cC_{g,n}\to \d_{g,n}$. By the naturality properties of the MHS of relative completions \cite[Thm.~13.12]{hai2}, these MHSs on $\p$ are equal. 
Denote the Lie algebra of $\Sp(H)$ by $\mathfrak{s}$. The basic properties of the weight filtrations of these Lie algebras follow from \cite[Cor.~13.2]{hai2} and listed below.

\begin{proposition}
The weight filtrations $W_\bullet\d_{g,n}$, $W_\bullet\d^\cC_{g,n}$, and $W_\bullet\p$ satisfy the properties:
\begin{enumerate}
\item 
$$
W_{-1}\d_{g,n} =\u_{g,n}=W_{-1}\u_{g,n},\,\,\,\,  W_{-1}\d^\cC_{g,n} =\u^\cC_{g,n} =W_{-1}\u^\cC_{g,n},\hspace{.1in}\text{and}\hspace{.1in}\p=W_{-1}\p,
$$
\item 
$$
\Gr^W_0\d_{g,n} =\Gr^W_0\d^\cC_{g,n} =\mathfrak{s},
$$
and 
\item 
for $m\geq1$, each graded quotients $\Gr^W_{-m}\d_{g,n}$, $\Gr^W_{-m}\d^\cC_{g,n}$, and $\Gr^W_{-m}\p$ are $\Sp(H)$-modules, and the induced maps 
$$\Gr^W_\bullet\p\to \Gr^W_\bullet\d^\cC_{g,n} \text{ and } \Gr^W_\bullet\d^\cC_{g,n}\to \Gr^W_\bullet\d_{g,n}
$$ are $\Sp(H)$-equivariant graded Lie algebra homomorphisms. 

\end{enumerate}
\end{proposition}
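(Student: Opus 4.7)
The plan is to deduce all three assertions from the main theorems of Hain in \cite{hai2}, which equip the Lie algebra of the relative completion of the fundamental group of a smooth variety (with respect to the monodromy of a polarized variation of Hodge structure) with a natural $\Q$-MHS whose Lie bracket is a morphism of MHS. Concretely, I would apply \cite[Cor.~13.2]{hai2} to both orbifolds $\H_{g,n}$ and $\cC_{\H_{g,n}}$ with the polarized VHS $V$ of weight $-1$, obtaining the MHSs on $\d_{g,n}$ and $\d^\cC_{g,n}$. The MHS on $\p$ is the classical one carried by the Lie algebra of the unipotent completion of $\pi_1(C,q)$, and by \cite[Thm.~13.12]{hai2} it agrees with the MHS that $\p$ inherits as the kernel of $\d^\cC_{g,n}\twoheadrightarrow \d_{g,n}$, so either viewpoint may be used interchangeably.

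For (1), the Lie algebra $\mathfrak{s}$ of $\Sp(H)$ is the reductive quotient of $\d_{g,n}$, and in Hain's construction it is pure of weight $0$, while the prounipotent radical $\u_{g,n}$ lies entirely in weights $\leq -1$. Thus the short exact sequence $0\to \u_{g,n}\to \d_{g,n}\to \mathfrak{s}\to 0$ is an extension of MHS, which identifies $W_{-1}\d_{g,n}$ with $\u_{g,n}=W_{-1}\u_{g,n}$; the same argument applies verbatim to $\d^\cC_{g,n}$. The equality $\p=W_{-1}\p$ follows from the fact that $H_1(C,\Q)$ is pure of weight $-1$ (since $C$ is smooth and proper) and that $\p$ is topologically generated by its abelianization, so all graded quotients of its lower central series sit in weight $\leq -1$. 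Part (2) is then immediate: applying $\Gr^W_0$ to the extension above annihilates $\u_{g,n}$ and yields $\Gr^W_0\d_{g,n}\cong \mathfrak{s}$, and similarly for $\d^\cC_{g,n}$.

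For (3), the key observation is that the brackets on $\d_{g,n}$, $\d^\cC_{g,n}$, and $\p$ are morphisms of MHS, so they descend to graded Lie brackets on the associated graded. Each $\Gr^W_{-m}$ thereby acquires an action of $\Gr^W_0=\mathfrak{s}$, which integrates to an $\Sp(H)$-module structure since $\Sp(H)$ is simply connected as an algebraic group. The maps $\Gr^W_\bullet\p\to\Gr^W_\bullet\d^\cC_{g,n}\to\Gr^W_\bullet\d_{g,n}$ are the associated graded of the inclusion $\p\hookrightarrow\d^\cC_{g,n}$ and the surjection $\d^\cC_{g,n}\twoheadrightarrow\d_{g,n}$, both of which are morphisms of MHS Lie algebras; they are therefore graded Lie algebra homomorphisms, and they are $\mathfrak{s}$-equivariant, hence $\Sp(H)$-equivariant, since they intertwine the graded brackets.

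The main point needing care is simply the applicability of Hain's machinery in the hyperelliptic setting, but this is essentially automatic: $\rho^\hyp$ has Zariski-dense image in $\Sp(H_\Q)$ by Theorem~\ref{image of hyp mapping class group}, so the hypotheses for relative completion are met, and the universal family over $\H_{g,n}$ is smooth proper with the required polarized VHS. The only genuinely non-tautological ingredient is the compatibility of the two a priori MHSs on $\p$ coming from the relative versus the unipotent completion, and this is precisely what \cite[Thm.~13.12]{hai2} provides. Beyond this verification, the proof is purely formal once the MHSs with compatible brackets are in hand.
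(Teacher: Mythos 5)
Your proposal is correct and follows the same route as the paper, which simply derives all three assertions from Hain's results (\cite[Cor.~13.2]{hai2} for the weight filtration of relative completions and \cite[Thm.~13.12]{hai2} for the compatibility of the two MHSs on $\p$), after noting that the Zariski density of $\rho_{g,n}$ and the polarized weight $-1$ VHS $V$ put the hyperelliptic setting within the scope of that machinery. Your write-up merely spells out the formal deductions that the paper leaves implicit.
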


\section{Hyperelliptic Johnson homomorphisms}
Fix a Weierstrass point $q$ in $S$. Denote the fundamental group $\pi_1(S, q)$ by $\Pi$, and denote the pronilpotent Lie algebra of the unipotent completion (Malcev completion, see \cite{mal}) of $\Pi$ over $\Q$ by $\p$ as well (The unipotent completions of $\pi_1(C)$ and $\pi_1(S)$ are isomorphic). 
  For a group $G$, denote the lower central series of $G$ by $L^\bullet G$, where $L^1G=G$ and $L^kG = [L^{k-1}G, G]$ for $k \geq 2$. For each $k\geq 1$, denote the associated graded quotient $L^kG/L^{k+1}G$ by $\Gr^L_kG$ and the quotient $G/L^kG$ by $N_kG$. There is an exact sequence
$$ 1\to \Gr^L_kG \to N_{k+1}G\to N_{k}G\to 1.$$
When $k=2$, we have the exact sequence
$$ 1\to \Gr^L_2G\to N_3G\to H_1(G)\to 1.$$
The mapping class group $\G_{g, 1}$ acts $N_k\Pi$ and so there is a homomorphism 
$$\rho_k: \G_{g,1}\to \Aut(N_k\Pi).$$
The action of $\G_{g,1}$ on $\Gr^L_k\Pi$ factors through the natural representation $\rho:\G_{g,1}\to \Sp(H_\Z)$. 
Note that when $k=2$, $N_2\Pi=H_1(S, \Z)=H_\Z$, $\rho_2$ is the homomorphism $\rho$, and $\ker \rho_2 = T_{g,1}$. The Johnson homomorphism 
$$\tau: \ker \rho_2 = T_{g,1}\to \Hom(H_\Z, \Gr^L_2\Pi)$$
is defined by setting $\phi\mapsto (u\mapsto \phi(\tilde{u})\tilde{u}^{-1} \in \Gr^L_2\Pi)$, where $\tilde{u}$ is any lift of $u$ in $N_3\Pi$. It is a $\G_{g,1}$-equivariant homomorphism, and the induced homomorphism $H_1(T_{g,1})\to \Hom(H_\Z, \Gr^L_2\Pi)$ is $\Sp(H_\Z)$-equivariant. It is well known that as an $\Sp(H_\Z)$-module, $\Gr^L_2\Pi$ is isomorphic to $\Lambda^2H_\Z/\langle \theta \rangle$. In fact, Johnson proved in \cite{joh1, joh2} that the image of $\tau$ is contained in $\Lambda^3H_\Z \subset \Hom(H_\Z,\Lambda^2H_\Z/\langle \theta \rangle)$ and $\tau$ induces an isomorphism $H_1(T_{g,1}) \to \Lambda^3H_\Z$ mod 2-torsion. \\
\indent In the case of the hyperelliptic mapping class group $\Delta_{g,1}$, the corresponding Johnson homomorphism is trivial as follows.
Since the hyperelliptic involution $\sigma$ commutes with the elements of $T\Delta_g$, it acts trivially on $T\Delta_g$. On the other hand, $\sigma$ acts as $-\id$ on $\Hom(H_\Z, \Gr^L_2\Pi)$. The triviality of the homomorphism implies that elements of $T\Delta_g$ act trivially on $N_3\Pi$. From the exact sequence
$$ 1\to \Gr^L_3\Pi \to N_{4}\Pi\to N_{3}\Pi\to 1,$$
we obtain the hyperelliptic Johnson homomorphism
$$\tau^\hyp_q: T\Delta_g\to \Hom(H_\Z, \Gr^L_3\Pi)\,\,\,\,\phi\mapsto (u\mapsto \phi(\tilde{u})\tilde{u}^{-1}\in \Gr^L_3\Pi),$$
where $\tilde{u}$ is any lift of $u$ in $N_4\Pi$. The homomorphism $\tau^\hyp_q$ is $\Delta_{g,1}$-equivariant.

\subsection{Key $\Sp(H)$-representations} Suppose that $g\geq 2$. Set $H = H_\Q$ and fix a symplectic basis $a_1, b_1, \ldots, a_g, b_g$ of $H$. 
The isomorphism class of each finite dimensional irreducible representation of $\Sp(H)$ can be indexed by a partition $\lambda$ of a nonnegative integer $n$ into less than $g$ parts:
$$n = \lambda_1 + \lambda_2 +\cdots + \lambda_l\,\,\text{with $l\leq g$ and } \lambda_1\geq\lambda_2\geq\cdots\geq\lambda_l.$$
Denote the irreducible $\Sp(H)$-representation whose isomorphism class corresponds to a partition $\lambda$ by $V_\lambda$. Let $\theta =\sum_{i=1}^g a_i\wedge b_i$. Then for example, we have
$$V_{[1]}\cong H, \hspace{.1in}V_{[1^2]}\cong \Lambda^2H/\langle \theta \rangle, \text{ and }V_{[1^3]}\cong \Lambda^3H/\theta \wedge H.$$

\subsection{The Lie algebras $\p$ and $\Der\p$ as $\Sp(H)$-representations}
Since $H_1(\p)$ is pure of weight $-1$, it follows that the natural weight filtration on $\p$ coincides with the lower central series, that is, $W_{-m}\p = L^m\p$ for each $m\geq 1$.
Each associated graded quotient $\Gr^W_{-m}\p=W_{-m}\p/W_{-m-1}\p$ denoted by $\p(-m)$ is a finite dimensional $\Sp(H)$-module. 
  Let $V$ be a vector space over a field. Denote the free Lie algebra generated by $V$ by $\L(V)$. It is the direct sum $\oplus_{k\geq 1} \L_k(V)$ of components $\L_k(V)$ of bracket length $k$.   It is well known that the graded Lie algebra $\Gr^W_\bullet\p$ has a minimal presentation 
$$\Gr^W_\bullet \p \cong \L(H)/\langle \theta \rangle.$$

The following result shows some of the $\Sp(H)$ representations appearing in $\Gr^W_\bullet\p$ for small values of $m$.
\begin{proposition}[{\cite[Prop.~8.4, Cor.~8.5]{hai3}}] For all $g\geq 2$, the irreducible decomposition of $\p(-m)$ as an $\Sp(H)$-representation for $1\leq m \leq 3$ is given by
$$\p(-m) =
\begin{cases}V_{[1]} &\text{ for } m=1\\
V_{[1^2]} & \text{ for } m=2\\
V_{[2+1]} & \text{ for } m=3.

\end{cases}
$$
\end{proposition}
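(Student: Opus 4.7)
The plan is to compute each graded piece directly from the minimal presentation $\Gr^W_\bullet \p \cong \L(H)/\langle \theta \rangle$ recalled just above the statement. The cases $m = 1, 2$ are essentially formal: since $\langle\theta\rangle$ is concentrated in degrees $\geq 2$, one has $\p(-1) = \L_1(H) = H = V_{[1]}$; and in degree $2$ the ideal contributes only the one-dimensional span of $\theta$, so under the identification $\L_2(H) = \Lambda^2 H$ and the standard $\Sp(H)$-decomposition $\Lambda^2 H = V_{[1^2]} \oplus \Q\cdot\theta$ one reads off $\p(-2) = V_{[1^2]}$.

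For $m = 3$ there are two inputs. First I would decompose $\L_3(H)$ as an $\Sp(H)$-module. The character of $\L_3$ as a $GL(H)$-representation is the Schur polynomial $s_{(2,1)} = \frac{1}{3}(p_1^3 - p_3)$, which identifies $\L_3(H) \cong S^{(2,1)} H$. Applying Littlewood's branching rule from $GL_{2g}$ to $\Sp_{2g}$, with $\nu$ running over partitions with all parts even (for $\lambda = (2,1)$ only $\nu = \emptyset$ and $\nu = (2)$ contribute), yields $\L_3(H) \cong V_{[2,1]} \oplus V_{[1]}$. Second, the degree-$3$ part of the ideal $\langle\theta\rangle$ is the image of the $\Sp(H)$-equivariant map $\ad(\theta) : H \to \L_3(H)$, so $\p(-3) = \L_3(H)/\Im\,\ad(\theta)$.

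Since $V_{[1]}$ has multiplicity one in $\L_3(H)$ and $H$ itself is irreducible, Schur's lemma reduces the problem to checking that $\ad(\theta)$ is nonzero; its image then must coincide with the $V_{[1]}$-summand, and $\p(-3) = V_{[2,1]} = V_{[2+1]}$. This nonvanishing is the main obstacle, which I would settle by embedding $\L(H)$ into the associative tensor algebra on $H$ and expanding $[a_1,\theta] = \sum_{i=1}^g [a_1,[a_i,b_i]]$. The tensor $a_1 \otimes a_1 \otimes b_1$ appears with coefficient $1$ in the $i=1$ summand and is absent from all other summands, so $[a_1,\theta] \neq 0$ already in the tensor algebra, and hence in $\L_3(H)$. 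With that in place the proposition assembles from the standard $\Sp(H)$-equivariant decompositions of low tensor and exterior powers of $H$.
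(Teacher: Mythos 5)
The paper gives no proof of this proposition; it is quoted from Hain's \emph{Infinitesimal presentations of Torelli groups}, and your computation from the presentation $\Gr^W_\bullet\p\cong\L(H)/\langle\theta\rangle$ is essentially the argument given there. The cases $m=1,2$, the identification $\L_3(H)\cong S^{(2,1)}H$ via $s_{(2,1)}=\tfrac13(p_1^3-p_3)$, the reduction of the $m=3$ case to multiplicity one of $V_{[1]}$ in $\L_3(H)$ plus nonvanishing of the equivariant map $x\mapsto[x,\theta]$, and the coefficient check on $a_1\otimes a_1\otimes b_1$ are all correct.

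One slip to repair: the branching rule you quote is the \emph{orthogonal} one. For restriction from $GL_{2g}$ to $\Sp_{2g}$ the auxiliary partition $\nu$ runs over partitions whose conjugate is even (all columns of even length: $\nu=\emptyset,(1,1),(2,2),\dots$), not over partitions with all parts even — the contraction is against the alternating form in $\Lambda^2H^*$, not a symmetric one. (Sanity check: with your convention $S^2H$ would acquire a trivial summand, which it does not, while $\Lambda^2H=V_{[1^2]}\oplus\Q\theta$ does.) For $\lambda=(2,1)$ the admissible $\nu$ are $\emptyset$ and $(1,1)$, and since $c^{(2,1)}_{(1),(1,1)}=c^{(2,1)}_{(1),(2)}=1$ you land on the same decomposition $V_{[2,1]}\oplus V_{[1]}$ by coincidence, so the conclusion stands. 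If you want to avoid the branching rule entirely, the same decomposition follows from the identification $\L_3(H)\cong(\Lambda^2H\otimes H)/\Lambda^3H$ used later in the paper, together with $\Lambda^2H\otimes H\cong(V_{[1^2]}\oplus\Q\theta)\otimes V_{[1]}\cong V_{[2,1]}\oplus V_{[1^3]}\oplus 2V_{[1]}$ and $\Lambda^3H\cong V_{[1^3]}\oplus V_{[1]}$.
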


The derivation algebra $\Der \p$ is also a MHS induced by that of $\p$.  Another key object we consider is $\Gr^W_{-2}\Der\p$.
The derivation algebra $\Der \L(H)$ of $\L(H)$ is given 
$$
\Der\L(H) = \oplus_{k\geq 0} \Hom(H, \L_k(H)).
$$
Furthermore, the derivation $\Gr^W_\bullet\Der\p = \Der\Gr^W_\bullet\p$ is an $\Sp(H)$-submodule of $\Der\L(H)$ and is given by
$$
\Gr^W_\bullet\Der\p =\oplus_{m\geq 0}\Der_{-m}\p,
$$
where $\Der_{-m}\p$ is the $\Sp(H)$-submodule of $\Hom(H, \L_{m+1}(H))$ consisting of derivations that annihilate $\theta$. By \cite[Prop.~9.1]{hai3}, in the representation ring of $\Sp(H)$,  we have
$$
\Der_{-m}\p = \p(-1)\otimes \p(-1-m) - \p(-2-m).
$$
In this paper, the weight $-2$ component $\Der_{-2}\p$ plays an essential role. 
\begin{proposition}[{\cite[Prop.~9.1, Cor.~9.2]{hai3}}]\label{derp w -2} For all $g\geq 2$, the irreducible decomposition of $\Der_{-2}\p$ as an $\Sp(H)$-module is given by
$$\Der_{-2}\p =
V_{[2^2]} + V_{[1^2]}.
$$

\end{proposition}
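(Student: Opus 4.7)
The plan is to apply, with $m=2$, the representation-ring identity
\[
\Der_{-m}\p \;=\; \p(-1) \otimes \p(-1-m) - \p(-2-m)
\]
stated just above the proposition.  This identity comes from the short exact sequence of $\Sp(H)$-modules
\[
0 \to \Der_{-2}\p \to H \otimes \p(-3) \to \p(-4) \to 0,
\]
in which the middle map sends a derivation class $\phi \in \Hom(H,\p(-3))$ to the residue in $\p(-4)$ of $\phi(\theta) = \sum_i \bigl([\phi(a_i),b_i] + [a_i,\phi(b_i)]\bigr)$; this is well-defined independent of the lift of $\phi$ to $\Hom(H,\L_3(H))$, and a derivation of $\L(H)$ descends to $\p$ precisely when its value on $\theta$ lies in the ideal $\langle \theta \rangle$.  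Surjectivity on the right is the statement $[H, \p(-3)] = \p(-4)$, which holds because $\p$ is generated in weight $-1$ by $H$.  Combined with $\p(-3) = V_{[2,1]}$ from the preceding proposition, the problem reduces to computing $\p(-4)$ and the $\Sp(H)$-decomposition of $V_{[1]} \otimes V_{[2,1]}$.

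For $\p(-4)$, I would use the presentation $\p = \L(H)/\langle\theta\rangle$ together with Klyachko's formula for the $\mathrm{GL}(H)$-decomposition of the free Lie algebra, which gives $\L_4(H) = S^{(3,1)}H \oplus S^{(2,1,1)}H$.  Restricting each Schur functor to $\Sp(H)$ via Littlewood's branching rule and then subtracting the degree-$4$ part of the Lie ideal $\langle\theta\rangle$, which is the image of $\ad\theta \colon \L_2(H) \to \L_4(H)$ taken modulo the Jacobi relations between $\theta$ and two generators, yields $\p(-4)$ as an $\Sp(H)$-module.  For the tensor product $V_{[1]} \otimes V_{[2,1]}$, I would expand at the $\mathrm{GL}$-level using the classical Pieri rule and then apply the Koike--Terada / Littlewood modification to restrict to $\Sp(H)$, which accounts for symplectic contractions against $\theta$.

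Substituting both decompositions into $H \otimes \p(-3) - \p(-4)$ and tallying the cancellations, the residual summands are precisely $V_{[2^2]}$ and $V_{[1^2]}$, as asserted.

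The main obstacle is the combinatorial bookkeeping: the Littlewood modification rule introduces cancellations that must be tracked accurately, and one must carefully identify the $\Sp(H)$-module structure of the degree-$4$ piece of $\langle \theta \rangle$.  Neither step requires new tools beyond classical Schur--Weyl theory and symplectic branching, and the hypothesis $g \geq 2$ ensures that none of the partitions appearing has length exceeding $g$, so no irreducibles collapse.
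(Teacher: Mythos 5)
Your argument is correct and is essentially the argument behind the cited result: the paper gives no proof beyond the reference to \cite{hai3}, and the representation-ring identity $\Der_{-m}\p=\p(-1)\otimes\p(-1-m)-\p(-2-m)$ that you take as your starting point is exactly the mechanism used there, so the remaining work is the (correct) bookkeeping of $\p(-4)=V_{[3,1]}+V_{[2,1,1]}+V_{[2]}$ and of $H\otimes V_{[2,1]}$. One small inaccuracy: for $g=2$ the partition $[2,1,1]$, which occurs in both $H\otimes\p(-3)$ and $\p(-4)$, does have length exceeding $g$, so your closing remark is not quite right; that case is instead handled by the observation that this term appears with the same multiplicity on both sides and therefore cancels before any modification rule needs to be invoked.
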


\subsection{Projection of $\Der_{-2}\p$ onto $V_{[1^2]}$} Here we will explain how to identify the $V_{[1^2]}$-component of $\Der_{-2}\p$, using the projection used in \cite[Cor.5.1]{colombo} and give an explicit formula.

Define an $\Sp(H)$-equivariant map $\phi: S^2\Lambda^2H \to \Hom(H, \L_3(H))$ by
$$
\phi: (u_1\wedge v_1)(u_2\wedge v_2)\mapsto $$
$$
x \mapsto 
\langle u_1, x\rangle[v_1, [u_2, v_2]] + \langle v_1, x\rangle[[u_2, v_2], u_1] +\langle u_2, x\rangle [v_2,[u_1, v_1]] + \langle v_2, x\rangle[[u_1, v_1], u_2],
$$
where $\langle\cdot, \cdot\rangle:\Lambda^2H\to \Z$ is the algebraic intersection paring. 

 The following result will be useful to describe the image of a Dehn twist under a hyperelliptic Johnson homomorphism. An easy computation gives
\begin{lemma}\label{theta_I image}
For $I\subset \{1, \ldots, g\}$, set $H_I = \mathrm{Span}\{a_i, b_i| i\in I\}$ and \\
$H^c_I =\mathrm{Span}\{a_i, b_i|i \not\in I\}$. Let $\theta_I =\sum_{i\in I}a_i\wedge b_i $. Then
$$\phi(\theta_I^2)(x) = \begin{cases} 0 & x \in H^c_I \\
-2[\theta_I, x] & x \in H_I \end{cases}.
$$
\end{lemma}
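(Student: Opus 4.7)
The plan is to verify both cases by unpacking the definition of $\phi$ on the square $\theta_I^2 = \sum_{i,j \in I}(a_i\wedge b_i)(a_j\wedge b_j)$ and evaluating term-by-term on a basis of $H$, exploiting that $a_1,b_1,\ldots,a_g,b_g$ is a symplectic basis with $\langle a_i,b_j\rangle = \delta_{ij}$, $\langle a_i,a_j\rangle = \langle b_i,b_j\rangle = 0$.

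First I would treat $x \in H^c_I$. Since $x$ is a linear combination of the $a_k,b_k$ with $k\notin I$, all four coefficients $\langle a_i,x\rangle,\langle b_i,x\rangle,\langle a_j,x\rangle,\langle b_j,x\rangle$ that appear in $\phi((a_i\wedge b_i)(a_j\wedge b_j))(x)$ vanish whenever $i,j\in I$. So every summand in $\phi(\theta_I^2)(x)$ is zero, proving the first case.

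Next I would treat $x \in H_I$. By linearity it suffices to check $x=a_k$ and $x=b_k$ for $k\in I$. Take $x=a_k$: then $\langle a_i,a_k\rangle = \langle a_j,a_k\rangle = 0$, while $\langle b_i,a_k\rangle = -\delta_{ik}$ and $\langle b_j,a_k\rangle = -\delta_{jk}$. Thus
\[
\phi((a_i\wedge b_i)(a_j\wedge b_j))(a_k) = -\delta_{ik}\,[[a_j,b_j],a_i] - \delta_{jk}\,[[a_i,b_i],a_j].
\]
Summing over $i,j \in I$ collapses the two Kronecker deltas independently, each contributing $\sum_{\ell \in I}[[a_\ell,b_\ell],a_k]$, so I get $-2\sum_{\ell \in I}[[a_\ell,b_\ell],a_k] = -2[\theta_I,a_k]$, where I identify $\theta_I = \sum_{\ell\in I}[a_\ell,b_\ell]$ as an element of $\L_2(H)$ (this is consistent with viewing $\Lambda^2H$ inside $\L_2(H)$). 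The case $x=b_k$ is analogous: only the $\langle a_i,x\rangle$ and $\langle a_j,x\rangle$ terms survive, giving $2\sum_{\ell\in I}[b_k,[a_\ell,b_\ell]] = -2[\theta_I,b_k]$. Combining, $\phi(\theta_I^2)(x) = -2[\theta_I,x]$ for $x\in H_I$.

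This is a direct computation; the only subtlety is keeping track of signs in the symplectic pairing and noticing that the symmetry of the formula for $\phi$ in the two indices $i,j$ produces the factor $2$. No obstacle is anticipated beyond careful bookkeeping.
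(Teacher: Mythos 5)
Your computation is correct and is exactly the "easy computation" the paper invokes (the lemma is stated without a written proof): expand $\theta_I^2=\sum_{i,j\in I}(a_i\wedge b_i)(a_j\wedge b_j)$, evaluate $\phi$ on basis vectors using $\langle a_i,b_j\rangle=\delta_{ij}$, and note that the symmetry in $(i,j)$ yields the factor $2$ while the identification $\theta_I=\sum_{\ell\in I}[a_\ell,b_\ell]\in\L_2(H)$ gives the stated bracket form. Nothing further is needed.
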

Furthermore, we have
\begin{lemma}
The image of $\phi$ is in $\Der_{-2}\p$. 
\end{lemma}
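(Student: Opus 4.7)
The plan is to verify the defining condition of $\Der_{-2}\p$: that every element in the image of $\phi$, when extended from $H$ to a derivation of $\L(H)$, annihilates $\theta = \sum_i [a_i,b_i]$. Since $\phi$ is linear and $S^2\Lambda^2 H$ is spanned by decomposable elements, it suffices to fix a generator $w = (u_1\wedge v_1)(u_2\wedge v_2)$, set $D := \phi(w)$, and show $D(\theta)=0$.

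By the Leibniz rule, $D(\theta) = \sum_i \bigl([D(a_i), b_i] + [a_i, D(b_i)]\bigr)$. Substituting the explicit formula for $D$, I would split the result into four groups indexed by $u_1, v_1, u_2, v_2$, each of the form $\sum_i \langle z, a_i\rangle[A, b_i] + \langle z, b_i\rangle[a_i, A]$ for $z \in \{u_1,v_1,u_2,v_2\}$ and $A$ one of the bracketed expressions $A_1 = [v_1,[u_2,v_2]]$, $A_2 = [[u_2,v_2],u_1]$, $A_3 = [v_2,[u_1,v_1]]$, $A_4 = [[u_1,v_1],u_2]$. The symplectic reconstruction identity
\[
z = \sum_i \bigl(\langle z,b_i\rangle a_i - \langle z,a_i\rangle b_i\bigr), \qquad z \in H,
\]
which is just a rewriting of the fact that $\{b_1,\dots,b_g,-a_1,\dots,-a_g\}$ is the dual basis of $\{a_1,\dots,a_g,b_1,\dots,b_g\}$ under $\langle\,,\,\rangle$, then collapses each group into a single bracket. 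The outcome is the clean formula
\[
D(\theta) = [u_1, A_1] + [v_1, A_2] + [u_2, A_3] + [v_2, A_4].
\]

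To finish, write $c_1 = [u_1,v_1]$ and $c_2 = [u_2,v_2]$, so $A_1 = [v_1,c_2]$, $A_2 = -[u_1,c_2]$, $A_3 = [v_2,c_1]$, $A_4 = -[u_2,c_1]$. Applying the Jacobi identity to the triple $(u_1,v_1,c_2)$ yields $[u_1,[v_1,c_2]] - [v_1,[u_1,c_2]] = -[c_2,c_1]$, and similarly for $(u_2,v_2,c_1)$ gives $[u_2,[v_2,c_1]] - [v_2,[u_2,c_1]] = -[c_1,c_2]$. Summing, $D(\theta) = -[c_2,c_1] - [c_1,c_2] = 0$, as required.

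The computation is essentially routine; the main obstacle is bookkeeping the signs arising from the antisymmetry of $\langle\,,\,\rangle$ when applying the reconstruction identity, and from the two $A_j$'s that come with an implicit minus sign. Once the signs are handled consistently, both the contraction step and the Jacobi step produce the desired cancellation without further input.
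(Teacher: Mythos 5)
Your proposal is correct and follows essentially the same route as the paper: both apply the derivation to $\theta=\sum_i[a_i,b_i]$ via the Leibniz rule, collapse each of the four groups of terms to a single bracket $[z,A_j]$ using the symplectic reconstruction identity $z=\sum_i(\langle z,b_i\rangle a_i-\langle z,a_i\rangle b_i)$, and then cancel the two resulting terms $-[c_2,c_1]-[c_1,c_2]$ by the Jacobi identity and antisymmetry. No gaps.
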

\begin{proof}
Let $\theta =\sum_{i=1}^g a_i\wedge b_i$. Then for each $(u_1\wedge v_1)(u_2\wedge v_2)$, we have 
\begin{align*} \phi((u_1\wedge v_1)(u_2\wedge v_2))(\theta) & = \sum_{i=1}^g[-\langle u_1, a_i\rangle b_i +\langle u_1, b_i\rangle a_i, [v_1, [u_2, v_2]]] \\
&+ [-\langle v_1, a_i\rangle b_i +\langle v_1, b_i\rangle a_i, [[u_2, v_2], u_1]]\\ &+[-\langle u_2, a_i\rangle b_i +\langle u_2, b_i\rangle a_i, [v_2, [u_1, v_1]]]\\
& +[-\langle v_2, a_i\rangle b_i +\langle v_2, b_i\rangle a_i, [[u_1, v_1], u_2]]\\
& = [u_1, [v_1, [u_2, v_2]]] + [v_1, [[u_2, v_2], u_1]]\\
& + [u_2, [v_2, [u_1, v_1]]]+ [v_2, [[u_1, v_1], u_2]]\\
& = - [[u_2, v_2], [u_1, v_1]] - [[u_1, v_1], [u_2, v_2]]\\
& = [[u_1, v_1], [u_2, v_2]]-[[u_1, v_1], [u_2, v_2]] =0.
\end{align*}
Hence, our claim follows. 
\end{proof}
  In fact, it is not difficult to see that $\phi$ is a surjection onto $\Der_{-2}\p$ by Schur's Lemma. We define the explicit projection of $\Der_{-2}\p$ onto the copy of $V_{[1^2]}$ as follows.
First, define an $\Sp(H)$-equivariant map  $p_H:\otimes^3H\to H$ by 
$$
u\otimes v \otimes w \mapsto \langle u, v\rangle w. 
$$
Note that the map $p_H$ is the dual of the injection $\theta\otimes\cdot : H \hookrightarrow \otimes^3H$ and that the composition $p_H\circ (\theta\otimes\cdot) = 2g\id_H$.
Secondly, define an $\Sp(H)$-equivariant map $p_{\Lambda^2H}: \Hom(H, \otimes^3H)\to \Lambda^2H$ by
$$
\gamma \mapsto \sum_{i=1}^g a_i\wedge p_H\gamma(b_i)- b_i\wedge p_H\gamma(a_i).
$$
Now, we have the identification $\L_3(H) = (\Lambda^2H\otimes H)/\Lambda^3 H$ and then using the inclusion $\Lambda^2 H \to \otimes^2H$, $u\wedge v\mapsto u\otimes v - v\otimes u$, we get the inclusion $\L_3(H)\to \otimes^3H$. Therefore, we have the  $\Sp(H)$-equivariant projection $ \Der_{-2}\p\to \Lambda^2H$ given by the composition
$$\Der_{-2}\p \hookrightarrow \Hom(H, \L_3(H))\hookrightarrow \Hom(H, \otimes^3H)\overset{p_{\Lambda^2H}}\to \Lambda^2H,
$$
which we denote by $\pi_{\Lambda^2H}$.
The following result is useful when computing the $V_{[1^2]}$-component in $\Der_{-2}\p$ of the image of an element of $T\Delta_g$ under $\tau^\hyp$. 
\begin{lemma}\label{proj formula onto [1^2]}
The composition $\pi_{\Lambda^2H}\circ \phi: S^2\Lambda^2H \to \Lambda^2H$ is given by
$$
(u_1\wedge v_1)(u_2\wedge v_2) \mapsto 4[ \langle u_1, v_1\rangle v_2\wedge u_2 + \langle v_2, u_2\rangle u_1\wedge v_1] +
$$
$$
2[\langle u_1, v_2\rangle v_1\wedge u_2 + \langle v_1, u_2\rangle u_1\wedge v_2 + \langle u_1, u_2\rangle v_2\wedge v_1 + \langle v_2, v_1\rangle u_1\wedge u_2].
$$
\end{lemma}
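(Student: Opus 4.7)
The proof is a direct bookkeeping calculation, so I will organize it around two small lemmas-within-the-proof that do the actual work, and then just combine everything.

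\textbf{Step 1: compute $p_H$ on a triple bracket.} The first step is to unpack the map $p_H$ on elements of the form $[u,[v,w]]\in\L_3(H)$. Using the chosen embedding $\L_2(H)\hookrightarrow\otimes^2H$, $[a,b]\mapsto a\otimes b - b\otimes a$, and the induced embedding $\L_3(H)\hookrightarrow \otimes^3 H$, one writes
\[
[u,[v,w]] = u\otimes v\otimes w - u\otimes w\otimes v - v\otimes w\otimes u + w\otimes v\otimes u,
\]
and then applying $p_H(x\otimes y\otimes z)=\langle x,y\rangle z$ term-by-term yields
\[
p_H[u,[v,w]] \;=\; \langle u,v\rangle\, w \;-\; \langle u,w\rangle\, v \;-\; 2\langle v,w\rangle\, u.
\]

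\textbf{Step 2: a dual-basis identity.} Next I would record the fact that for any $\alpha,w\in H$, the linear map $\gamma_{\alpha,w}\colon H\to H$, $x\mapsto\langle\alpha,x\rangle w$, satisfies
\[
p_{\Lambda^2H}(\gamma_{\alpha,w}) \;=\; \sum_{i=1}^g\bigl(\langle\alpha,b_i\rangle a_i - \langle\alpha,a_i\rangle b_i\bigr)\wedge w \;=\; \alpha\wedge w,
\]
because $\sum_i\langle\alpha,b_i\rangle a_i-\langle\alpha,a_i\rangle b_i=\alpha$ is the standard symplectic dual-basis identity. Together with Step 1, this reduces the whole computation to expanding sums and collecting wedges.

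\textbf{Step 3: expand $\phi$ and sum.} Using $[[u_2,v_2],u_1]=-[u_1,[u_2,v_2]]$ and the analogous identity for the last term, rewrite
\begin{align*}
\phi\bigl((u_1\wedge v_1)(u_2\wedge v_2)\bigr)(x) &= \langle u_1,x\rangle[v_1,[u_2,v_2]] - \langle v_1,x\rangle[u_1,[u_2,v_2]] \\
&\quad + \langle u_2,x\rangle[v_2,[u_1,v_1]] - \langle v_2,x\rangle[u_2,[u_1,v_1]].
\end{align*}
Applying $p_H$ to each of the four bracket expressions via Step 1 produces four maps $x\mapsto\langle\alpha,x\rangle w$ (linear combinations thereof), and Step 2 converts each summand into a wedge $\alpha\wedge w$. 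The contributions $-2\langle u_2,v_2\rangle$ coming from the first two terms combine into $-4\langle u_2,v_2\rangle\,u_1\wedge v_1 = 4\langle v_2,u_2\rangle\,u_1\wedge v_1$, and similarly the last two terms combine into $4\langle u_1,v_1\rangle\,v_2\wedge u_2$; these are the weight-$4$ summands in the statement. The remaining eight summands of the form $\langle{-},{-}\rangle\,{-}\wedge{-}$ pair up in four pairs (each pair equal under the antisymmetries $\langle a,b\rangle=-\langle b,a\rangle$ and $a\wedge b=-b\wedge a$), producing the four weight-$2$ terms.

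\textbf{Main obstacle.} There is no conceptual difficulty: the argument is completely formal once Steps 1 and 2 are in place. The only real hazard is sign bookkeeping, since each term is manipulated with both the antisymmetry of $\langle\cdot,\cdot\rangle$ and of $\wedge$, and one must be careful that the $\otimes^3H$-model of $\L_3(H)$ is the one compatible with the embedding used to define $\pi_{\Lambda^2H}$. I would therefore carry out the calculation symbolically for all four bracket terms at once and only apply the antisymmetries at the very end when collecting the eight pairwise-equal summands into the four displayed in the statement.
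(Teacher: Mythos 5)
Your proposal is correct and matches the paper's proof, which is simply the same direct computation left to the reader; I verified that your identities $p_H[u,[v,w]]=\langle u,v\rangle w-\langle u,w\rangle v-2\langle v,w\rangle u$ and $p_{\Lambda^2H}(x\mapsto\langle\alpha,x\rangle w)=\alpha\wedge w$ hold and that the four bracket terms recombine exactly into the stated coefficients $4$ and $2$. Your caveat about the embedding $\L_3(H)\hookrightarrow\otimes^3H$ is well placed: the standard embedding of the free Lie algebra in the tensor algebra is the one that makes the lemma's formula come out as stated.
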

\begin{proof}
A direct computation suffices. 
\end{proof}
Denote the projection $\Lambda^2H \to \Lambda^2H/\langle \theta \rangle$ by $\tilde\theta$.  On the other hand, we may view $V_{[1^2]}$ as a submodule of $\Lambda^2 H$ and there is the $\Sp(H)$-projection $\hat\theta: \Lambda^2 H \to V_{[1^2]}$ given by $u\wedge v \mapsto u\wedge v - \frac{\langle u,v\rangle }{g}\theta$. Denote the composition $\hat\theta\circ \pi_{\Lambda^2H}\circ \phi$ by $\hat\pi$. Denote the map $V_{[1^2]} \to S^2\Lambda^2H$ given by multiplication by $\theta$ by $j_\theta$. An easy computation together with Lemma \ref{proj formula onto [1^2]} gives

\begin{lemma}\label{inclusion of [1^2]}
The composition
$$
V_{[1^2]} \overset{j_\theta}\to S^2\Lambda^2H \overset{\hat\pi}\to V_{[1^2]}
$$
is given by $-4(g+1)$ times the identity map $\id_{V_{[1^2]}}$.
\end{lemma}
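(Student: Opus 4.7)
The plan is to reduce to a one-variable computation by Schur's Lemma. Because $V_{[1^2]}$ is absolutely irreducible as a representation of the split semisimple group $\Sp(H)$, one has $\mathrm{End}_{\Sp(H)}(V_{[1^2]}) = \Q$. Each of the maps $j_\theta$, $\phi$, $\pi_{\Lambda^2 H}$, and $\hat\theta$ is $\Sp(H)$-equivariant by construction, so the composition
$$
\hat\pi \circ j_\theta \;:\; V_{[1^2]} \longrightarrow V_{[1^2]}
$$
is a scalar multiple of the identity. It therefore suffices to determine this scalar by evaluating on any convenient nonzero vector.

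A natural choice is $w = a_1 \wedge a_2$, which lies in $V_{[1^2]}$ because $\langle a_1, a_2 \rangle = 0$ forces $\hat\theta(w) = w$. One expands
$$
j_\theta(w) \;=\; \sum_{i=1}^{g} (a_i \wedge b_i)(a_1 \wedge a_2) \;\in\; S^2\Lambda^2 H,
$$
and applies Lemma \ref{proj formula onto [1^2]} to each summand with $(u_1, v_1, u_2, v_2) = (a_i, b_i, a_1, a_2)$. Using $\langle a_i, b_j\rangle = \delta_{ij}$ and $\langle a_i, a_j \rangle = 0$, the term $4\langle u_1, v_1\rangle\, v_2\wedge u_2$ contributes $-4(a_1 \wedge a_2)$ for every $i$; of the remaining six pairings, only $\langle v_1, u_2\rangle = -\delta_{i1}$ at $i=1$ and $\langle v_2, v_1\rangle = \delta_{i2}$ at $i=2$ are nonzero, each contributing an extra $-2(a_1\wedge a_2)$. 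Summing over $i = 1, \dots, g$ gives $\pi_{\Lambda^2 H}\circ\phi(j_\theta(w)) = -4(g+1)(a_1\wedge a_2)$, and $\hat\theta$ acts as the identity on this vector. Hence the scalar is $-4(g+1)$, proving the lemma.

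The only real pitfall is the bookkeeping of signs of the symplectic pairing, in particular distinguishing $\langle a_i, b_j \rangle = \delta_{ij}$ from $\langle b_i, a_j \rangle = -\delta_{ij}$ when substituting into the formula of Lemma \ref{proj formula onto [1^2]}. One could alternatively confirm the multiplicity of $V_{[1^2]}$ in $S^2\Lambda^2 H$ a priori (via the Pieri rule) and then identify the scalar by a Casimir or trace argument, but the direct plug-in on $a_1\wedge a_2$ is by far the quickest route.
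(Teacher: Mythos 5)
Your proof is correct and follows exactly the route the paper intends: the paper states the lemma as ``an easy computation together with Lemma~\ref{proj formula onto [1^2]}'', and your Schur's-Lemma reduction followed by evaluation of $\sum_i (a_i\wedge b_i)(a_1\wedge a_2)$ via that formula is that computation, with the sign bookkeeping done correctly (the count of ``six'' remaining pairings should be five, but this is immaterial).
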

 
\begin{lemma}\label{tambo proj}
With notation as above, for each $x\in S^2\Lambda^2H$, the vector 
$$
x - \frac{1}{-4(g+1)} (j_\theta \circ \hat{\pi}) (x)
$$
lands in the $V_{[2^2]}$-component of $\Der_{-2}\p$ via $\phi$.
\end{lemma}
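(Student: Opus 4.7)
The plan is to reduce the statement to identifying the kernel of the $\Sp(H)$-equivariant map $\hat\theta\circ \pi_{\Lambda^2H}$ when restricted to $\Der_{-2}\p$, and then to verify that the given correction $x - \frac{1}{-4(g+1)}(j_\theta\circ\hat\pi)(x)$ is sent by $\phi$ into that kernel. First I would introduce $p := \frac{1}{-4(g+1)}\, j_\theta \circ \hat\pi$ as an endomorphism of $S^2\Lambda^2 H$. Using Lemma \ref{inclusion of [1^2]} one computes
\[
\hat\pi \circ p \;=\; \frac{1}{-4(g+1)}\,(\hat\pi \circ j_\theta)\circ \hat\pi \;=\; \hat\pi,
\]
so $\hat\pi(x - p(x)) = 0$. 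Unwinding the definition $\hat\pi = \hat\theta\circ \pi_{\Lambda^2 H}\circ \phi$, this says exactly that $\phi(x - p(x))$ lies in the kernel of $\hat\theta\circ \pi_{\Lambda^2 H}$ restricted to $\Der_{-2}\p$.

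Next I would apply Proposition \ref{derp w -2}, which gives the $\Sp(H)$-decomposition $\Der_{-2}\p = V_{[2^2]} \oplus V_{[1^2]}$ into distinct (non-isomorphic) irreducibles for $g\geq 2$. Since $\hat\theta \circ \pi_{\Lambda^2 H}:\Der_{-2}\p\to V_{[1^2]}$ is $\Sp(H)$-equivariant, Schur's lemma forces it to vanish on the $V_{[2^2]}$-summand and to act on the $V_{[1^2]}$-summand by a scalar $c$. It remains to check that $c\neq 0$; but the relation
\[
\hat\theta \circ \pi_{\Lambda^2 H} \circ \phi \circ j_\theta \;=\; -4(g+1)\,\id_{V_{[1^2]}}
\]
supplied by Lemma \ref{inclusion of [1^2]} is nonzero for $g\geq 2$. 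By Schur again, $\phi\circ j_\theta$ must land in the $V_{[1^2]}$-summand of $\Der_{-2}\p$ and be a nonzero multiple of the inclusion, so composing with $\hat\theta\circ \pi_{\Lambda^2 H}$ forces $c\neq 0$. Hence $\ker\bigl(\hat\theta\circ \pi_{\Lambda^2 H}|_{\Der_{-2}\p}\bigr) = V_{[2^2]}$, and combined with the first step this yields $\phi(x - p(x))\in V_{[2^2]}$.

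The argument is essentially formal once Lemma \ref{inclusion of [1^2]} is in hand; the only conceptual point, and the sole potential obstacle, is verifying that the scalar $c$ on $V_{[1^2]}$ does not vanish, which is precisely what Lemma \ref{inclusion of [1^2]} delivers through the factor $-4(g+1)$.
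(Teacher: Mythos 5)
Your proposal is correct and follows essentially the same route as the paper: use Lemma \ref{inclusion of [1^2]} to show $\hat\pi$ kills $x - \frac{1}{-4(g+1)}(j_\theta\circ\hat\pi)(x)$, then invoke Proposition \ref{derp w -2} and Schur's lemma to conclude that the kernel of $\hat\theta\circ\pi_{\Lambda^2H}$ on $\Der_{-2}\p$ is the $V_{[2^2]}$-component. The only difference is that you make explicit the (correct) observation, left implicit in the paper, that the scalar by which $\hat\theta\circ\pi_{\Lambda^2H}$ acts on the $V_{[1^2]}$-summand is nonzero because $\hat\pi\circ j_\theta = -4(g+1)\,\id \neq 0$.
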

\begin{proof}
Let $s =\phi\left(x - \frac{1}{-4(g+1)} (j_\theta \circ \hat{\pi}) (x)\right)$. By Lemma \ref{inclusion of [1^2]}, $(\hat\theta\circ\pi_{\Lambda^2H})(s)= 0$, and hence by Proposition \ref{derp w -2} and Schur's Lemma,  $s$ is in the $V_{[2^2]}$-component of $\Der_{-2}\p$. 
\end{proof}

\subsection{Nontriviality of $\tau^\hyp$}\label{johnson hyp nonzero}
Suppose that $g\geq 2$. 
Let $c_i$ be a separating simple closed curve in $S$ that divides $S$ into two subsurfaces $S'_i$ and $S''_i$ of genus $i$ and $g-i$, respectively. Fix a symplectic basis $a_1, b_1, \ldots, a_g, b_g$ for $H_\Z$ such that for each $i = 1, \ldots, g-1$, the sets $\{a_l, b_l|1\leq l\leq i\}$ and $\{a_l, b_l|i+1\leq l \leq g\}$ form symplectic bases for $H_1(S'_i)$ and $H_1(S''_i)$, respectively. Let $\theta'_i =\sum_{l=1}^i a_l\wedge b_l$ and $\theta''_i = \sum_{l=i+1}^g a_l\wedge b_l$. Then we have $\theta =\theta_i' + \theta_i''$. Denote the isotopy class of the Dehn twist along  $c_i$ by $d_i$. It is an element of the hyperelliptic Torelli group $T\Delta_g$. For simplicity, assume that the Weierstrass point $q$ lies in the subsurface $S'_i$. 
Lemma \ref{theta_I image} implies that we have

$$\phi((\theta''_i)^2)(x) = \begin{cases} 0 & x \in H_1(S'_i) \\
-2[\theta''_i, x] & x \in H_1(S''_i) \end{cases}.
$$
\begin{figure}[h]
    \centering
    \includegraphics[scale=0.6]{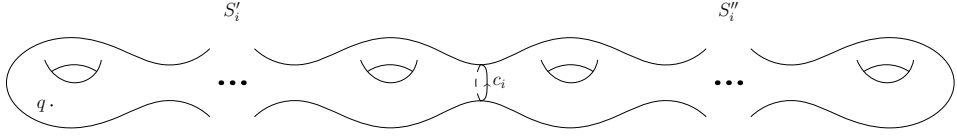}
    \caption{\textcolor{black}{Subsurfaces $S'_i$ and $S''_i$ separated by $c_i$}}
    \label{fig1}
\end{figure}
It then follows from the construction of $\tau^\hyp_q$ with the base point $q$ that we have 
\begin{proposition}[{\cite[Prop.~6.2]{wat}}]\label{dehn twist image}
If $g\geq 2$, then 

$$ \tau^\hyp_q(d_i) = \frac{1}{2}\phi((\theta''_i)^2).$$

\end{proposition}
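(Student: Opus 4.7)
The plan is to compute $\tau^\hyp_q(d_i)$ directly from the action of the separating Dehn twist $d_i$ on $\pi_1(S,q)$, and then match the result against the explicit formula for $\frac{1}{2}\phi((\theta''_i)^2)$ that is already recorded in Lemma \ref{theta_I image}.

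First I would describe the action of $d_i$ on $\pi_1(S,q)$. Pick generators so that $\{a_l,b_l\mid l\le i\}$ lie entirely in $S'_i$ and so that for $l>i$ the generators $a_l,b_l$ are represented by loops of the form $\lambda\alpha\lambda^{-1}$, where $\lambda$ is an arc from $q$ to a basepoint $q'\in S''_i$ crossing $c_i$ exactly once and $\alpha$ is a loop in $S''_i$ based at $q'$. Since $d_i$ is supported in an annular neighborhood of $c_i$, it fixes every $a_l,b_l$ with $l\le i$; on the remaining generators it conjugates by the boundary word $\gamma=\prod_{l=1}^{i}[a_l,b_l]$, so that
\[
d_i(x)\,x^{-1}=[\gamma,x]\quad\text{for every }x\in H_1(S''_i),\qquad d_i(x)\,x^{-1}=1\quad\text{for every }x\in H_1(S'_i).
\]
These commutators already lie in $L^3\Pi$ because $\gamma\in L^2\Pi$, so the formula for $\tau^\hyp_q$ applies without any further correction term from the choice of lift $\tilde x\in N_4\Pi$.

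Next I would pass to the graded Lie algebra $\Gr^L_{\bullet}\Pi\cong\L(H)/\langle\theta\rangle$. Under the identification $\Gr^L_2\Pi\cong\Lambda^2 H/\langle\theta\rangle$ the commutator $[a_l,b_l]$ maps to $a_l\wedge b_l$, so $\gamma$ maps to $\theta'_i=\sum_{l=1}^{i}a_l\wedge b_l$. Since $\theta=\theta'_i+\theta''_i$ vanishes in $\Gr^W_{\bullet}\p$, the class of $\gamma$ equals $-\theta''_i$ in $\Gr^L_2\Pi$. Consequently, in $\Gr^L_3\Pi$,
\[
\tau^\hyp_q(d_i)(x)=[\gamma,x]=\begin{cases}0 & x\in H_1(S'_i),\\ -[\theta''_i,x] & x\in H_1(S''_i).\end{cases}
\]

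Finally, Lemma \ref{theta_I image} with $I=\{i+1,\dots,g\}$ gives exactly
\[
\tfrac{1}{2}\phi((\theta''_i)^2)(x)=\begin{cases}0 & x\in H_1(S'_i),\\ -[\theta''_i,x] & x\in H_1(S''_i),\end{cases}
\]
so the two derivations of $\Gr^L_{\bullet}\Pi$ agree on each summand of the basis decomposition $H=H_1(S'_i)\oplus H_1(S''_i)$, proving the proposition. The main subtlety will be justifying rigorously that the action of $d_i$ on the generators $a_l,b_l$ with $l>i$ is honest conjugation by the bounding word $\gamma$ (and not by $\gamma^{\pm1}$ with a different sign or by a further conjugate), since this is where the factor $\frac{1}{2}$ and the correct sign in the final formula originate; the cleanest way to handle this is to set up the amalgamated decomposition $\pi_1(S,q)=\pi_1(S'_i,q)*_{\langle\gamma\rangle}\pi_1(S''_i\cup N(c_i),q)$ and track the orientation of $c_i$ pushed by $d_i$ carefully.
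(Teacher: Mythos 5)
Your argument is correct and is essentially the standard one: the paper itself does not prove this proposition but cites it from \cite[Prop.~6.2]{wat}, and your computation --- the separating twist acts trivially on $\pi_1(S'_i,q)$ and by conjugation by the bounding word $\gamma=\prod_{l\le i}[a_l,b_l]$ on the other side, so $\tau^\hyp_q(d_i)(x)=[\theta'_i,x]\equiv-[\theta''_i,x]$ modulo $[\theta,H]$, matching Lemma \ref{theta_I image} --- is exactly the expected derivation. The only point left open is the one you flag yourself, namely pinning down whether the conjugating element is $\gamma$ or $\gamma^{-1}$ (a right- versus left-handed twist convention), which is where the overall sign is decided; everything else is complete.
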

The following result of Brendle, Margalit, and Putman is a key to understand the hyperelliptic Johnson homomorphisms. A separating curve $d$ is said to be symmetric if $\sigma(d) = d$. 

\begin{theorem}[{\cite[Thm.~A]{bmp}}]\label{hyp torelli generators}
For $g\geq 0$, the group $T\Delta_g$ is generated by Dehn twists about symmetric separating curves.
\end{theorem}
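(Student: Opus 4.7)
The plan is to descend via the Birman--Hilden correspondence and then apply a standard group-action-on-a-complex argument to produce generators.

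First, the Birman--Hilden theorem gives an extension $1 \to \langle \sigma \rangle \to \Delta_g \to \Gamma_{0,2g+2} \to 1$, and the image of $T\Delta_g$ in $\Gamma_{0,2g+2}$ is the ``symmetric Torelli'' subgroup, namely the kernel of the representation $\Gamma_{0,2g+2} \to \Sp(H_\Z)$ induced by the action on the homology of the double cover $S \to S/\langle \sigma \rangle$. Under this branched cover, a symmetric separating simple closed curve on $S$ descends to a simple closed curve on the $(2g+2)$-punctured sphere that separates the branch points into two subsets of \emph{even} cardinality, and the Dehn twist about it maps to (a square of) a Dehn twist on the sphere. Conversely, every even-separating curve on the sphere lifts to a symmetric separating curve on $S$. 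So the theorem is equivalent to showing that the symmetric Torelli subgroup of $\Gamma_{0,2g+2}$ is generated by squares of Dehn twists about even-separating curves (together with $\sigma$ when convenient).

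Second, I would introduce a simplicial complex $\mathcal{X}$ whose vertices are isotopy classes of symmetric separating simple closed curves on $S$ (equivalently, isotopy classes of even-separating curves on the sphere), with higher simplices given by collections realizable disjointly. The group $T\Delta_g$ acts simplicially on $\mathcal{X}$. Using the standard criterion of Armstrong/Putman for extracting generators from a group action on a connected complex, it suffices to verify two things: (i) $\mathcal{X}$ is connected, and (ii) for each vertex $v \in \mathcal{X}$, the stabilizer $\mathrm{Stab}_{T\Delta_g}(v)$ is generated by Dehn twists about symmetric separating curves. Condition (ii) admits an induction on genus: cutting $S$ along a symmetric separating curve $c$ produces two symmetric subsurfaces of strictly smaller genus, and $\mathrm{Stab}_{T\Delta_g}(c)$ decomposes, up to powers of the Dehn twist along $c$ itself, as a product of the hyperelliptic Torelli groups of those smaller pieces.

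The main obstacle will be establishing the connectivity of $\mathcal{X}$. Concretely, one must show that any two even-separating simple closed curves on the $(2g+2)$-punctured sphere can be joined by a sequence of such curves in which consecutive members are disjoint. This is a delicate combinatorial statement because standard surgery on intersection points may destroy the evenness condition on the separated subsets of punctures. The proof proceeds by a carefully controlled surgery argument that, whenever a pair of intersections is removed, modifies the curves so that the even-separating property is preserved at each step; this is the technical heart of the Brendle--Margalit--Putman argument. Once connectivity is established, the induction on genus and the simplicial-complex criterion combine to yield the stated generating set.
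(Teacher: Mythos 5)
This statement is quoted in the paper as an external result (Theorem A of Brendle--Margalit--Putman, \cite{bmp}); the paper gives no proof of it, so there is nothing internal to compare your argument against. What you have written is a plausible-looking outline of the general BMP strategy (Birman--Hilden descent, action on a curve complex, Putman-style generation criterion, induction on genus for stabilizers), but as a proof it has two problems, one of which is a concrete mathematical error.

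The error is the parity in your Birman--Hilden dictionary. A simple closed curve $\bar c$ on the $(2g+2)$-marked sphere surrounding $k$ of the branch points lifts to a \emph{connected} $\sigma$-invariant curve in $S$ precisely when $k$ is \emph{odd} (the curve is nontrivial for the $\Z/2$ monodromy of the branched cover), and in that case the lift bounds a subsurface of genus $(k-1)/2$, so it is a symmetric \emph{separating} curve exactly when $3\le k\le 2g-1$; its twist $T_c$ descends to $T_{\bar c}^2$. When $k$ is \emph{even}, the preimage of $\bar c$ is a disjoint pair $c_1,c_2$ of \emph{nonseparating} curves with $[c_1]=-[c_2]\ne 0$, and $T_{\bar c}$ lifts to $T_{c_1}T_{c_2}$, which does not even lie in the Torelli group. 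So the symmetric separating curves correspond to the \emph{odd}-separating curves on the sphere (both sides odd, since $2g+2$ is even), not the even-separating ones as you assert; the complex you propose to build is the wrong one as stated.

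The second problem is that your argument is not self-contained: the connectivity of the relevant complex, which you yourself identify as the technical heart, is not proved but attributed to ``the Brendle--Margalit--Putman argument.'' Since the statement being proved \emph{is} Brendle--Margalit--Putman's Theorem A, this is circular; everything else in your outline (the generation criterion, the stabilizer induction) is standard machinery, and the entire content of the theorem is concentrated in exactly the step you defer. Unless the intent is merely to summarize why the citation is believable, the proposal does not constitute a proof.
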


As immediate consequences of Proposition \ref{dehn twist image} and Theorem \ref{hyp torelli generators}, we have 
\begin{corollary}
If $g\geq 2$, $\tau^\hyp_q$ is nontrivial and the image of $\tau^\hyp_q$ is contained in $\Der_{-2}\p$. 
\end{corollary}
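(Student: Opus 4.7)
The plan is to reduce both assertions to a direct computation on generators of $T\Delta_g$. By the Brendle--Margalit--Putman theorem (Theorem \ref{hyp torelli generators}), $T\Delta_g$ is generated by Dehn twists $d_i$ about symmetric separating simple closed curves, so it is enough to establish:
\begin{enumerate}[(a)]
\item $\tau^\hyp_q(d_i) \in \Der_{-2}\p$ for every such $d_i$, and
\item $\tau^\hyp_q(d_i) \neq 0$ for at least one $d_i$.
\end{enumerate}
Since $\Der_{-2}\p$ is a subgroup of $\Hom(H,\L_3(H))$, (a) and the fact that the Dehn twists generate will imply the full containment.

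For (a), first apply Proposition \ref{dehn twist image} to write
$$\tau^\hyp_q(d_i) = \tfrac{1}{2}\phi((\theta''_i)^2).$$
By the Lemma proved just after Lemma \ref{theta_I image}, the entire image of $\phi$ is contained in $\Der_{-2}\p$, so $\tau^\hyp_q(d_i) \in \Der_{-2}\p$ for every symmetric separating $d_i$. This handles (a) with no further work.

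For (b), I would exhibit a single $i \in \{1,\dots,g-1\}$ for which $\phi((\theta''_i)^2)$ is nonzero, and the cleanest detector is the $V_{[1^2]}$-projection $\hat\pi = \hat\theta\circ \pi_{\Lambda^2H}\circ \phi$. Take $i = g-1$, so $\theta''_i = a_g\wedge b_g$ and $(\theta''_i)^2 = (a_g\wedge b_g)(a_g\wedge b_g)$ in $S^2\Lambda^2H$. Apply the explicit formula from Lemma \ref{proj formula onto [1^2]} with $u_1=u_2=a_g$ and $v_1=v_2=b_g$; the six inner-product terms evaluate to a nonzero multiple of $a_g\wedge b_g$. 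Composing with $\hat\theta: \Lambda^2H \to V_{[1^2]}$, the result remains nonzero in $V_{[1^2]} = \Lambda^2H/\langle\theta\rangle$ because $a_g\wedge b_g$ is not a scalar multiple of $\theta$ when $g\geq 2$. Hence $\tau^\hyp_q(d_{g-1})$ has nonzero $V_{[1^2]}$-component, which forces $\tau^\hyp_q(d_{g-1})\neq 0$.

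The only minor subtlety is bookkeeping: $\tau^\hyp_q$ is defined a priori into $\Hom(H_\Z,\Gr^L_3\Pi)$, while $\Der_{-2}\p$ lives in $\Hom(H,\L_3(H))$ with $H=H_\Q$. After tensoring with $\Q$ and using the identification $\Gr^L_\bullet\Pi\otimes\Q \cong \L(H)/\langle\theta\rangle$, the derivation property of $\tau^\hyp_q(d_i)$ (which is automatic from the fact that it comes from an automorphism of $N_4\Pi$ acting trivially on $N_3\Pi$) together with the relation $\theta = 0$ in $\p$ places it in $\Der_{-2}\p$; the formula $\tfrac12\phi((\theta''_i)^2)$ makes this manifest, which is why I would appeal to Proposition \ref{dehn twist image} rather than reprove the derivation property from scratch. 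The computation in (b) is the only step that involves any real calculation, and it is short.
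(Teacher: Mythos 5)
Your proof is correct and follows essentially the same route as the paper, which states the corollary as an immediate consequence of Proposition \ref{dehn twist image} (giving $\tau^\hyp_q(d_i)=\tfrac12\phi((\theta''_i)^2)$, hence landing in $\Der_{-2}\p$ since $\im\phi\subseteq\Der_{-2}\p$) together with Theorem \ref{hyp torelli generators}. Your explicit nontriviality check via the $V_{[1^2]}$-projection (yielding $-12\,a_g\wedge b_g$, nonzero mod $\theta$ for $g\geq 2$) is a correct way to make precise the step the paper leaves implicit.
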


\subsection{The cohomology class of $\tau^\hyp_q$ }\label{weierstrass class}
Here we associate a cohomology class to $\tau^\hyp_q$ via the relative completion as follows. Assume that $n\geq 1$ and $\Delta_{g,n}$ fixes $q$. Consider the homotopy exact sequence 
$$
\xymatrix{
1\ar[r]& \pi_1(C, q)\ar[r] &\pi_1^\orb(\cC_{\H_{g,n}}, q)\ar[r]_{\pi_{g,n\ast}} &\pi_1^\orb(\H_{g,n}, x)\ar@/_1pc/[l]_{s_{q\ast}}\ar[r]& 1.
}
$$
 The Weierstrass section $s_q$ of the universal curve $\pi_{g,n}:\cC_{\H_{g,n}} \to \H_{g,n}$ given by $q$ induces a section $s_{q\ast}$ of $\pi_{g,n\ast}$.
Taking the relative completion of $\D^\cC_{g,n}$ and $\D_{g,n}$ 
produces the exact sequence \ref{rel homotopy seq}, which yields the exact sequence of prounipotent groups over $\Q$
$$
\xymatrix{
1\ar[r]& \cP\ar[r]& \U^\cC_{g,n}\ar[r]& \U_{g,n}\ar@/_1pc/[l]_{\tilde s_{q}}\ar[r]& 1.
}
$$
By the universal property of relative completions, $s_{q\ast}$ induces a section $\tilde s_q$ of $\cD^\cC_{g,n}\to \cD_{g,n}$, which restricts to a section  of $\U^\cC_{g,n}\to \U_{g,n}$. We denote this restriction by $\tilde s_q$ as well.  
Applying the $\mathrm{log}$ map to the sequence, we obtain the exact sequence of pronilpotent Lie algebras
$$
\xymatrix{
0\ar[r]& \p\ar[r]& \u^\cC_{g,n}\ar[r]& \u_{g,n}\ar@/_1pc/[l]_{d\tilde s_{q}}\ar[r]& 0.
}
$$
 The section $\tilde s_q$ induces a Lie algebra section $d\tilde s_q$ of $\u^\cC_{g,n}\to \u_{g,n}$. Therefore, the Lie algebra $\u_{g,n}$ acts on $\p$ via the adjoint action of $\u^\cC_{g,n}$ on $\p$, and hence we have the adjoint map $\mathrm{adj}_q: \u_{g,n}\to \Der\p$. Since $\adj_q$ preservers the weight filtrations, we obtain an $\Sp(H)$ equivariant graded Lie algebra homomorphism 
 $$
 \Gr^W_\bullet \adj_q: \Gr^W_\bullet\u_{g,n}\to \Gr^W_\bullet \Der\p. 
 $$
 The proof of \cite[Prop.~7.7]{wat} implies that $H_1(\u_{g,n})$ is pure of weight $-2$. Therefore, we have 
 $$\Gr^W_{-2}\u_{g,n} = \Gr^W_{-2}H_1(\u_{g,n}) = H_1(\u_{g,n}),$$
 and  so $\Gr^W_{-2}\adj_q$ can be expressed as an $\Sp(H)$-equivariant map
 $$
 \Gr^W_{-2}\adj_q: H_1(\u_{g,n})\to \Der_{-2}\p.
 $$

 On the other hand,  recall that the restriction of the relative completion $\tilde\rho_{g,n}: \Delta_{g,n}\to \cG_{g,n}$ to $T\Delta_g$ induces the map $T\Delta_g\to \U_{g,n}$, whose image is Zariski dense in $\U_{g,n}$. Composing with the log map $\U_{g,n}\to \u_{g,n}$ and $\u_{g,n}\to H_1(\u_{g,n})$, we obtain the map $r_{g,n}: T\Delta_g\to H_1(\u_{g,n})$.  Now, since the adjoint map is induced by the conjugation action of $\Delta^\cC_{g,n}$ on $\pi_1(C, q)$, the construction of the hyperelliptic Johnson homomorphism implies that there is a commutative diagram 

$$\xymatrix@R=1em@C=3em{
T\D_g\ar[d]_{r_{g,n}}\ar[rd]^{\tau^\hyp_q}\\
	H_1(\u_{g,n})\ar[r]_{\Gr^W_{-2}\adj_q}&\Der_{-2}\p.
}
$$
 Denote the composition 
$$
\tilde\theta\circ \pi_{\Lambda^2H}\circ \tau^\hyp_q: T\Delta_g \to \Lambda^2H/\langle\theta\rangle = V_{[1^2]}
$$ 
by $\tilde\tau^\hyp_q$ and  the composition 
$$\tilde\theta\circ \pi_{\Lambda^2H}\circ \Gr^W_{-2}\adj_q : H_1(\u_{g,n}) \to \Lambda^2H/\langle \theta \rangle$$
 by $\tilde\tau^\adj_q$. Since the map $r_{g,n}$ has a Zariski dense image, it follows that the homomorphism $\tilde\tau^\adj_q$  is a unique $\Sp(H)$-equivariant map that makes the diagram
 $$
 \xymatrix@R=1em@C=2em{
T\Delta_g\ar[d]_{r_{g,n}}\ar[dr]^{\tilde\tau^\hyp_q}&\\
H_1(\u_{g,n})\ar[r]_{\tilde\tau^\adj_q}&\Lambda^2H/\langle \theta\rangle
  }
 $$
 commute. Now the homomorphism $\tilde\tau^\adj_q$ corresponds to a class in $H^1(\Delta_{g,n}, V_{[1^2]})$, denoted by $[q]$, via the natural isomorphism 
 $$H^1(\Delta_{g,n}, V_{[1^2]})\cong \Hom_{\Sp(H)}(H_1(\u_{g,n}), V_{[1^2]})$$
 induced by relative completion in Theorem \ref{iso for rel comp}. Since $\Delta_g[2]$ is a subgroup of $\Delta_{g,n}$, there is a natural homomorphism $H^1(\D_{g,n}, V_{[1^2]})\to H^1(\D_g[2], V_{[1^2]})$.  It is easy to see that the image of $[q]$ in $H^1(\Delta_g[2], V_{[1^2]})$ is equal to the class of $q$ produced by the above construction  when $n = 2g+2$. \\



\section{Normal functions and Ceresa cycles}

Normal functions are holomorphic sections of families of intermediate Jacobians that satisfy certain asymptotic conditions. This notion naturally arise when studying families of algebraic varieties. In this section, we recall the construction of the normal function associated to a family of homologically trivial algebraic cycles in a family of smooth projective varieties. Then we review Ceresa cycles and how their associated normal function relates to the Johnson homomorphism. More details can be found in Hain \cite{hain,hain2}.

\subsection{Intermediate Jacobians}\label{intermediate jac}
Suppose that $X$ is a smooth projective variety and that $Z$ is an algebraic $d$-cycle in $X$. We have an exact sequence of mixed Hodge structures
$$0\to H_{2d+1}(X)\to H_{2d+1}(X,Z)\to H_{2d}(Z)\to H_{2d}(X).$$
The class of the cycle $Z$ defines a morphism of mixed Hodge structures
$$c_Z:\Z(d)\to H_{2d}(Z).$$
If $Z$ is homologically trivial, we can pull back the previous sequence along $c_Z$ to obtain an extension
$$0\to H_{2d+1}(X)\to E_Z\to\Z(d)\to 0$$
in the category $\MHS$ of mixed Hodge structures. Tensoring with $\Z(-d)$ gives an extension
$$0\to H_{2d+1}(X,\Z(-d))\to E_Z(-d)\to\Z\to 0$$
and thus a class $e_Z$ in
$$\Ext^1_\MHS(\Z,H_{2d+1}(X,\Z(-d))).$$
Note that $H_{2d+1}(X)$ has weight $-(2d+1)$, and thus $H_{2d+1}(X,\Z(-d))$ has weight $-1$.

For a mixed Hodge structure $V$ whose weights are all negative, there is a natural isomorphism
$$J(V)\cong\Ext^1_\MHS(\Z,V)$$
where $$J(V):=\frac{V_\C}{F^0V_\C+V_\Z}.$$
In general, by Carlson \cite{carlson}, we have 
$$\Ext^1_\MHS(B,A)\cong J(\Hom(B,A))$$
where $\Ext^1_\MHS(B,A)$ is the set of congruence classes of extensions of $B$ by $A$ for separated mixed Hodge structures $A$ and $B$, i.e. the highest weight of $A$ is less than the lowest weight of $B$.

The class $e_Z$ of a homologically trivial $d$-cycle $Z$ in $X$ can thus be viewed as a class 
$$e_Z\in J(H_{2d+1}(X,\Z(-d))),$$
which, in turn, can be viewed as a class in the $d$-th intermediate Jacobian
$$J_d(X):=(F^{d+1}H^{2d+1}(X))^*/H_{2d+1}(X ,\Z)$$
as $J_d(X)\cong J(H_{2d+1}(X,\Z(-d)))$. This class can be described explicitly by Griffiths' generalization of the Abel-Jacobi construction as follows. Write $Z=\partial\Gamma$, where $\Gamma$ is a topological $(2d+1)$-chain. Each class in $F^{d+1}H^{2d+1}(X)$ can be represented by a closed form in the Hodge filtration $F^{d+1}$ of the de Rham complex of $X$. By Stokes' theorem, integrating these representatives over $\Gamma$ gives a well defined functional
$$\int_\Gamma:F^{d+1}H^{2d+1}(X)\to\C.$$
The choice of $\Gamma$ is unique up to a topological $(2d+1)$-cycle. So $\int_\Gamma$ determines a point in $J_d(X)$ that corresponds to $e_Z$.

\subsection{Normal functions}
Suppose that $T$ is a smooth variety and that $\V\to T$ is a variation of mixed Hodge structures of negative weights over $T$. Let $\cV$ be the corresponding bundle whose fiber over $t\in T$ is $V_t$. Denote by $\J(\cV)$ the bundle over $T$ whose fiber over $t$ is 
$$J(V_t)\cong\Ext^1_\MHS(\Z,V_t).$$

\begin{definition}\label{nf}
A holomorphic section $s:T\to\J(\cV)$ of $\J(\cV)\to T$ is a \emph{normal function} if it defines an extension 
$$0\to\V\to\E\to\Z_T\to 0$$
in the category of admissible variations of mixed Hodge structure over $T$.
\end{definition}

\begin{remark}\label{vmhs}
Admissibility characterizes good variations in the sense of Steenbrink--Zucker\cite{sz} and Saito\cite{saito}. It is satisfied in the geometric situations, Guill\'en et al\cite{GNP} and Hain\cite{hain3}. An important characterization is the existence of a relative weight filtration at the infinity, which amounts to certain asymptotic conditions.
\end{remark}

\begin{example}
Families of homologically trivial algebraic cycles give rise to such extensions of variations of mixed Hodge structures and thus normal functions.
Suppose that $\X\to T$ is a family of smooth projective varieties over a smooth base $T$. Suppose that $\cZ$ is an algebraic cycle in $\X$, which is proper over $T$ of relative dimension $d$. Denote the fibers of $\X$ and $\cZ$ over $t\in T$ by $X_t$ and $Z_t$ respectively. Suppose that $Z_t$ is homologically trivial in $X_t$ for all $t$.

The Hodge structures $H_{2d+1}(X_t,\Z(-d))$ form a variation of Hodge structure $\V$ over $T$ of weight $-1$. The intermediate Jacobians $J_d(X_t)\cong J(H_{2d+1}(X_t,\Z(-d)))$ form the relative intermediate Jacobian
$$\J_d\to T.$$
The family of cycles $\cZ$ defines a section of this bundle. This is called \emph{the normal function of the cycle $\cZ$}.
\end{example}

\subsection{Ceresa cycles and their associated normal functions}
Let $C$ be a compact Riemann surface of genus $g$, and $JC$ its Jacobian. When $g\ge 1$, for each $x\in C$, the Abel-Jacobi map
\begin{align*}
    \nu_x:C &\to JC \\
    y &\mapsto y-x
\end{align*}
is an embedding. Denote the image by $C_x$, which is an algebraic 1-cycle in $JC$. There is an involution on $JC$ by $D\mapsto -D$. Denote the image of $C_x$ under this involution by $C_x^-$. Since the involution acts trivially on $H_2(JC)$, the algebraic 1-cycle
$$Z_{C,x}:=C_x-C_x^-$$
is homologically trivial. By \S\ref{intermediate jac}, this $Z_{C,x}$ determines a point in the intermediate Jacobian 
$$e_{C,x}\in J_1(JC)\cong J(H_3(JC,\Z(-1))).$$
The primitive decomposition 
$$H_3(JC,\Q)=H_1(JC,\Q)\oplus PH_3(JC,\Q)$$
is the decomposition of $H_3(JC,\Q)$ into irreducible \textcolor{black}{$\Sp(H_1(C))$}-modules, the highest weights of the pieces being $\lambda_1$ and $\lambda_3$. Pontrjagin product with the class of $C$ induces a homomorphism 
$$\Phi:JC\to J_1(JC).$$
Denote the cokernel of $\Phi$ by $JQ(JC)$. By \cite[Prop. 6.1]{hain}, we have
$$e_{C,x}-e_{C,y}=\Phi(x-y).$$
It follows that the image of $e_{C,x}$ in $JQ(JC)$ is independent of $x$. The image will be denoted by $e_C$.

Now suppose that the genus $g\ge 3$. Denote by $\J_1$ and $\J_{1\prim}$ the bundles over $\M_{g,n}$ whose fiber over $[C;\{x_1,\cdots,x_n\}]$ is $J_1(JC)$ and $JQ(JC)$ respectively. 

We construct sections ${e}_{g,1}:[C,x]\mapsto e_{C,x}$ and $e_g:[C]\mapsto e_C$ of $\J_1\to\M_{g,1}$ and $\J_{1\prim}\to\M_g$ respectively.
\begin{center}
    \begin{tikzcd}
    \J_1 \ar[d] & \J_{1\prim} \ar[d] \\
    \M_{g,1} \ar[u, bend left=40, "{e}_{g,1}"] & \M_g \ar[u, bend right=40, "e_g"']
    \end{tikzcd}
\end{center}

By Definition \ref{nf} and Remark \ref{vmhs}, we have the following result by construction.
\begin{theorem}
The sections ${e}_{g,1}$ and $e_g$ are normal functions. 
\end{theorem}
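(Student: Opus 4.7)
The plan is to exhibit both sections as coming from families of homologically trivial algebraic cycles in a family of smooth projective varieties, and then invoke the framework of the preceding example together with the admissibility criteria of Remark~\ref{vmhs}.

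First, for $e_{g,1}$: consider the universal curve $\pi : \cC \to \M_{g,1}$ together with its tautological section $s$ picking out the marked point. The relative Jacobian $\cJ\to\M_{g,1}$ is a smooth projective family of abelian varieties, and the fiberwise Abel--Jacobi embeddings $\nu_{x} : C \hookrightarrow JC$ with $x = s([C,x])$ glue to a family-wise embedding of $\cC$ into $\cJ$ whose image I call $\cC_{s}$. The involution $D\mapsto -D$ on the relative Jacobian produces the companion family $\cC_{s}^{-}$, and the relative cycle $\cZ := \cC_{s} - \cC_{s}^{-}$ is proper over $\M_{g,1}$ of relative dimension $1$, with every fiber $Z_{C,x}$ homologically trivial in $JC$ (since the involution acts trivially on $H_{2}(JC)$). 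The example immediately preceding the theorem then produces out of $\cZ$ a normal function on $\M_{g,1}$ valued in the relative intermediate Jacobian $\J_{1} \to \M_{g,1}$, and by construction its value at $[C,x]$ is $e_{C,x}$; this gives the section $e_{g,1}$.

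Second, for $e_{g}$: the issue is that the universal Ceresa cycle cannot be defined without choosing a base point, so one must show that the composition $\M_{g,1} \xrightarrow{e_{g,1}} \J_{1} \twoheadrightarrow \J_{1\prim}$ factors through the forgetful map $\M_{g,1}\to\M_{g}$. The Pontrjagin product with the class of the universal curve in $\cJ$ gives a morphism $\Phi : \cJ \to \J_{1}$ over $\M_{g}$, and the identity
\[
e_{C,x} - e_{C,y} = \Phi(x-y)
\]
of \cite[Prop.~6.1]{hain} recalled in the excerpt shows that the difference of values of $e_{g,1}$ at two points $[C,x]$ and $[C,y]$ over the same $[C]$ lies in the image of $\Phi$. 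Since $\J_{1\prim} = \J_{1}/\im\Phi$ by definition, projecting kills this ambiguity; thus the composite descends to a well-defined holomorphic section $e_{g} : \M_{g}\to \J_{1\prim}$.

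It remains to check that both sections are normal functions in the sense of Definition~\ref{nf}, i.e.\ that the corresponding extensions of variations of $\Q$-mixed Hodge structure are \emph{admissible}. This is where the only nontrivial point lies: admissibility is a condition at the boundary of $\M_{g,n}$, requiring the existence of a relative weight filtration at infinity and a controlled asymptotic behavior. As noted in Remark~\ref{vmhs}, however, admissibility is automatic for variations of geometric origin by the work of Steenbrink--Zucker, M.~Saito, Guill\'en et al., and Hain; both $e_{g,1}$ and $e_{g}$ arise from an algebraic family of homologically trivial cycles inside a smooth projective family, so this general result applies directly. The main obstacle I would anticipate, if one wanted to avoid citing these black boxes, is precisely this verification of admissibility at the boundary, but under the cited results there is nothing further to do.
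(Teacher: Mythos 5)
Your proposal is correct and follows the same route as the paper: the paper's proof is the single sentence ``By Definition \ref{nf} and Remark \ref{vmhs}, we have the following result by construction,'' i.e.\ it exhibits the sections as arising from a family of homologically trivial algebraic cycles as in the preceding example and appeals to admissibility of variations of geometric origin. You have simply written out the details the paper leaves implicit (the relative Ceresa cycle over $\M_{g,1}$, and the descent of the primitive part to $\M_g$ via \cite[Prop.~6.1]{hain}), all of which matches the paper's intended argument.
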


Fix base points $[C,x]\in\M_{g,1}$ and $[C]\in\M_g$ respectively. So $H:=H_1(C,\Z)$ is fixed. Taking fundamental groups, the normal functions ${e}_{g,1}$ and $e_g$ induce $\Sp(H)$-module homomorphisms
$$\xi_{g,1}: H_1(T_{g,1},\Z)\to H_1(J_1(JC),\Z)\cong\Lambda^3 H_1(C,\Z)$$
and 
$$\xi_{g}: H_1(T_{g},\Z)\to H_1(JQ(JC),\Z)\cong\Lambda^3 H_1(C,\Z)/H_1(C,\Z)$$
respectively. The following result is shown in Hain \cite[Prop. 6.3]{hain}.
\begin{theorem}
The map $\xi_{g,n}$ is twice the Johnson homomorphism $\tau_{g,n}$ for $n=0,1$.
\end{theorem}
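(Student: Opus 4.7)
The plan is to compute the monodromy of the normal function $e_{g,1}$ in terms of the mixed Hodge structure on a truncated unipotent completion of $\pi_1(C,x)$, and to compare the resulting formula directly with the Johnson homomorphism. First I would reduce the case $n=0$ to the case $n=1$: the normal function $e_g$ is obtained from $e_{g,1}$ by postcomposing with the canonical projection $\J_1 \to \J_{1\prim}$, which on $H_1$ of the fibres induces the quotient $\Lambda^3 H \to \Lambda^3 H / (\theta \wedge H)$, and the Johnson homomorphism $\tau_g$ factors through $\tau_{g,1}$ via the same projection. Hence the $n=0$ case follows formally once the $n=1$ case is settled.

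For the $n=1$ case, I would reinterpret the extension class $e_{C,x} \in \Ext^1_{\MHS}(\Z, H_3(JC, \Z(-1)))$ via the mixed Hodge structure on the three-step nilpotent quotient of $\pi_1(C, x)$. Let $\p$ denote the Lie algebra of the unipotent completion of $\pi_1(C, x)$ over $\Q$. There is a natural extension of mixed Hodge structures
$$
0 \to L^2\p/L^3\p \to \p/L^3\p \to H \to 0,
$$
whose class lies in $\Ext^1_{\MHS}(H, L^2\p/L^3\p) \cong J(\Hom(H, \Lambda^2 H /\langle\theta\rangle))$, since $L^2\p/L^3\p \cong \Lambda^2 H /\langle\theta\rangle$ as $\Sp(H)$-modules. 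Under the natural $\Sp(H)$-equivariant inclusion $H_3(JC) = \Lambda^3 H \hookrightarrow \Hom(H, \Lambda^2 H/\langle\theta\rangle)$ induced by the symplectic form, the identification I would establish is that $e_{C,x}$ corresponds precisely to twice the class of $\p/L^3\p$. The factor of $2$ originates from the involution $[-1]$ on $JC$ acting as $-\id$ on $H_3(JC)$, so that the Ceresa cycle $C_x - C_x^-$ equals $C_x - [-1]_\ast C_x$, i.e.\ twice the antisymmetrization of $C_x$.

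Once this identification is in place, the monodromy computation is immediate. A loop $\phi \in T_{g,1}$ acts trivially on both $H$ and $\Lambda^2 H/\langle\theta\rangle$, as these are $\Sp(H)$-modules and $\phi$ lies in the Torelli group. Hence its action on the extension $\p/L^3\p$ is described by an element of $\Hom(H, L^2\p/L^3\p)$, and this element is by definition $\tau_{g,1}(\phi)$. The monodromy of $e_{g,1}$ around $\phi$ is computed by the same formalism applied to the extension representing $e_{C,x}$, so the identification above yields $\xi_{g,1}(\phi) = 2\tau_{g,1}(\phi)$.

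The main obstacle is the Hodge-theoretic identification of $e_{C,x}$ with twice the class of $\p/L^3\p$ under the inclusion $\Lambda^3 H \hookrightarrow \Hom(H, \Lambda^2 H/\langle\theta\rangle)$. This requires matching the Abel-Jacobi integrals defining $e_{C,x}$ over $3$-chains in $JC$ with the iterated integrals on $C$ that govern the MHS on $\pi_1(C,x)/L^3$, using either Pulte's comparison theorem for pointed curves or a direct computation via the MHS on the relative cohomology $H^3(JC, C_x \cup C_x^-)$. Once the fibrewise identity is established, naturality of both constructions in $[C,x]$ automatically globalizes it to the asserted identity of normal functions over $\T_{g,1}$, and passage to fundamental groups yields the theorem.
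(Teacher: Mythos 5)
The paper gives no proof of this statement---it is quoted directly from Hain \cite{hain} (Prop.~6.3)---and your outline is essentially the argument used there: reduce $n=0$ to $n=1$ via the projection to the primitive quotient, identify the Ceresa extension class $e_{C,x}$ with twice the class of the extension $0\to L^2\p/L^3\p\to\p/L^3\p\to H\to 0$ under $\Lambda^3H\hookrightarrow\Hom(H,\Lambda^2H/\langle\theta\rangle)$, and read off the monodromy of both normal functions as the same element of $\Hom(H,\Lambda^2H/\langle\theta\rangle)$. The only caution is that the fibrewise identity $e_{C,x}=2m_x$ is not a routine verification but is precisely the Harris--Pulte theorem, which carries all the analytic content (the comparison of Abel--Jacobi integrals with iterated integrals); you correctly flag it as the crux and invoking it is exactly what the cited source does.
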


\section{Collino cycles and their associated normal functions}
In this section, we review Colombo's results on Collino cycles \cite{colombo}. Since these can be viewed as degenerations of Ceresa cycles, they give rise to elements in higher Chow groups of the Jacobian $JC$ of a hyperelliptic curve $C$ \cite[1.1]{collino}. Their regulator images, defined in \cite{beilinson, bloch}, can be expressed in terms of iterated integrals. They are thus identified with some specific extensions constructed from the fundamental group of $C$. As in the case of Ceresa cycles, we can construct the normal functions associated to Collino cycles and compute their induced monodromies.

\subsection{Regulators}
Let $X$ be a smooth projective variety of dimension $n$. An element in the first higher Chow group $CH^n(X,1)$ is defined by 
$$A:=\sum_i (C_i,f_i)$$
where $C_i$ is an irreducible curve on $X$ and $f_i$ a rational function on $C_i$ such that
$$\sum_i [div(f_i)]=0.$$
Let $\gamma_i:=f_i^{-1}([0,\infty])$. The above condition tells us that 
$$\eta:=\sum_i\gamma_i$$
is a loop. In fact, it is homologically trivial, so that
$$\eta=\partial D$$ 
where $D$ is a 2-chain. Then the regulator map
\begin{align*}
\reg: CH^n(X,1)&\to \textcolor{black}{I_2(X)}:=(F^1H^2(X))^*/H_2(X,\Z(1)) \\
A&\mapsto\reg(A)
\end{align*}
where $\reg(A)$ denotes the current, i.e. linear functional on $F^1H^2(X)$, taking
$$\alpha\mapsto\sum_i\int_{C_i-\gamma_i}\log (f_i)\alpha+2\pi i\int_D\alpha$$
for every $\alpha\in F^1H^2(X)$. A similar definition of regulator is given by
$$\reg:CH^n(X,1)\to I_2(X)_\prim:=(F^1H^2(X)_\prim)^*/H_2(X,\Z(1))_\prim.$$
using the primitive part $H^2_\prim$ of $H^2$.

\subsection{Collino cycles}\label{cc}
A Collino cycle $Z$ is a canonical higher cycle, depending on the choice of two Weierstrass points of a genus $g$ hyperelliptic curve $C$, on the Jacobian $JC$ of $C$ \cite{collino}. Fix Weierstrass points $q_1$, $q_2$, let $h$ be a degree 2 morphism
$$h:C\to\P^1$$
such that $h(q_1)=0$ and $h(q_2)=\infty$. Recall that for each $x\in C$, we denote by $C_x$ the image under the Abel-Jacobi map
\begin{align*}
    \nu_x:C &\to JC \\
    y &\mapsto y-x.
\end{align*}
Let $h_s:=h\circ\nu_{q_s}^{-1}$ be a function on $C_s:=C_{q_s}=\nu_{q_s}(C)$ for $s=1,2$. Then
$$Z:=(C_1,h_1)+(C_2,h_2)\in CH^g(JC,1)$$ 
is called a Collino cycle. Its regulator is nonzero for general $C$. In fact, it can be computed using iterated integrals.

\begin{theorem}[Thm 1.1 \cite{colombo}]\label{rz}
Let $\phi$ and $\psi$ be harmonic 1-forms on $JC$ with $\psi$ of type $(1,0)$. We use the same notation for their pullback to $C$. Then
$$\reg(Z)(\phi\wedge\psi)=2\int_{C-\gamma}\log(h)\phi\wedge\psi+2\pi i\int_\gamma (\phi\psi-\psi\phi)$$
where $\gamma:=h^{-1}([0,\infty])$.
\end{theorem}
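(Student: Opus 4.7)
The plan is to unwind the explicit regulator formula from Section 8.1 applied to the Collino cycle $Z = (C_1,h_1) + (C_2,h_2)$ with $\alpha = \phi\wedge\psi$, which splits as $\sum_{s=1,2}\int_{C_s-\gamma_s}\log(h_s)(\phi\wedge\psi) + 2\pi i\int_D \phi\wedge\psi$ for any 2-chain $D$ with $\partial D = \gamma_1 + \gamma_2$. (The hypothesis that $\psi$ is of type $(1,0)$ is used only to ensure $\phi\wedge\psi \in F^1H^2(JC)$ so the pairing is defined.) For the logarithmic piece, harmonic 1-forms on $JC$ are translation-invariant, so $\nu_{q_s}^*(\phi\wedge\psi) = \phi\wedge\psi$ on $C$, and $\nu_{q_s}^* h_s = h$ by the definition $h_s = h \circ \nu_{q_s}^{-1}$. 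Thus each of the two summands pulls back to $\int_{C-\gamma}\log(h)(\phi\wedge\psi)$, giving the total $2\int_{C-\gamma}\log(h)\,\phi\wedge\psi$.

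For the 2-chain piece, set $\ell := \gamma_1 + \gamma_2$. I first verify that $\ell$ bounds in $JC$: it is a 1-cycle because $q_1 - q_2$ is 2-torsion in $JC$ (both $2q_1$ and $2q_2$ are linearly equivalent to $h^*(\mathrm{pt})$), so the ramified endpoint contributions from $\div(h_s)$ cancel in $Z_0(JC)$; its rational class $[\ell]_\Q$ vanishes via the period computation below, and since $H_1(JC,\Z)$ is torsion-free, $\ell$ bounds an integral 2-chain $D$. A Stokes argument then yields $\int_D\phi\wedge\psi = \int_\ell\phi\psi$ as an iterated integral along the concatenation loop $\gamma_1 * \gamma_2$: on simply connected pieces of $D$ write $\phi = d\Phi$, so $\phi\wedge\psi = d(\Phi\psi)$ since $d\psi=0$, and therefore $\int_D \phi\wedge\psi = \int_{\partial D}\Phi\psi$; the constant-of-integration ambiguity in $\Phi$ contributes $c\cdot\int_\ell\psi = 0$, because $\psi$ is closed and $\ell$ bounds.

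Chen's composition formula along the concatenation gives $\int_\ell\phi\psi = \int_{\gamma_1}\phi\psi + (\int_{\gamma_1}\phi)(\int_{\gamma_2}\psi) + \int_{\gamma_2}\phi\psi$, and pullback via $\nu_{q_s}$ converts this to $2\int_\gamma\phi\psi + (\int_\gamma\phi)(\int_\gamma\psi)$. The hyperelliptic involution $\sigma$ preserves $\gamma = h^{-1}([0,\infty])$ as a 1-chain (interchanging its two sheets with compatible orientation) while acting as $-\id$ on pullbacks of harmonic 1-forms from $JC$; hence $\int_\gamma\phi = \int_\gamma\psi = 0$, which kills the product term (and retroactively yields $[\ell]_\Q = 0$) and leaves $\int_D\phi\wedge\psi = 2\int_\gamma\phi\psi$. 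The shuffle identity $\int_\gamma\phi\psi + \int_\gamma\psi\phi = (\int_\gamma\phi)(\int_\gamma\psi) = 0$ rewrites $2\int_\gamma\phi\psi$ as $\int_\gamma(\phi\psi-\psi\phi)$, which gives the second term after multiplying by $2\pi i$. The main technical hurdle is the Stokes step -- one must decompose $D$ into simply connected cells to extract primitives of $\phi$ and control the basepoint ambiguity of the iterated integral -- and both issues are resolved by the vanishing $\int_\ell\psi = 0$; everything else is routine manipulation of Chen's iterated integrals and of the hyperelliptic symmetry.
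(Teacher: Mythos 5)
This theorem is quoted from Colombo (Thm.~1.1 of the cited paper); the present paper offers no proof of it, so there is no in-paper argument to compare yours against, and I am judging your sketch on its own. Your overall strategy is the right one, and most of it is sound: unwinding the regulator, pulling the logarithmic terms back to $C$ via $\nu_{q_s}$ using translation-invariance of harmonic forms on $JC$ (so both summands give $\int_{C-\gamma}\log(h)\,\phi\wedge\psi$), checking that $\ell=\gamma_1+\gamma_2$ is a $1$-cycle because $q_1-q_2$ is $2$-torsion, and killing its rational class with the hyperelliptic involution. All of that is correct.

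There is, however, a genuine gap in the iterated-integral bookkeeping: you apply Chen's composition formula and the shuffle relation to the $1$-chains $\gamma$, $\gamma_1$, $\gamma_2$, but these are identities for single paths and are not linear in the path, while $\gamma=h^{-1}([0,\infty])$ is the sum of two arcs $\gamma'$, $\gamma''$ from $q_1$ to $q_2$ interchanged by $\sigma$. Set $A=\int_{\gamma'}\phi$ and $B=\int_{\gamma'}\psi$; then $\int_{\gamma''}\phi=-A$ and $\int_{\gamma''}\psi=-B$, so indeed $\int_\gamma\phi=\int_\gamma\psi=0$, but $A$ and $B$ are half-period--type quantities that are generically nonzero. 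Consequently $\int_\gamma\phi\psi+\int_\gamma\psi\phi=2AB\neq\bigl(\int_\gamma\phi\bigr)\bigl(\int_\gamma\psi\bigr)=0$, so your shuffle step is false as stated; and the honest computation of $\int_D\phi\wedge\psi$ --- decomposing $\ell$ into the genuine loops $L_j=\nu_{q_1}(\gamma^{(j)})\ast\nu_{q_2}(\gamma^{(j)})$, applying the composition formula to each, and keeping the cross term $\bigl(\int_{L_1}\phi\bigr)\bigl(\int_{L_2}\psi\bigr)$ that arises because $L_2$ must be based at the lift $[L_1]$ of the basepoint --- gives $\int_D\phi\wedge\psi=2\int_\gamma\phi\psi-2AB$, not $2\int_\gamma\phi\psi$. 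Since one also has $\int_\gamma(\phi\psi-\psi\phi)=2\int_\gamma\phi\psi-2AB$, your two misapplications have exactly cancelling defects and the final formula is correct; but as written neither intermediate equality holds, and the proof is not complete until you carry out the arc-by-arc decomposition and verify that the nonvanishing $\pm AB$ terms on the two sides agree. (A minor additional point: $\int_D\phi\wedge\psi$ depends on $D$ only modulo periods of $\phi\wedge\psi$ over $H_2(JC,\Z)$, which is harmless because the regulator takes values in the quotient by $H_2(JC,\Z(1))$, but this should be said.)
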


\begin{remark}
One of the forms $\psi$ being type $(1,0)$ makes $\phi\wedge\psi$ a representative of an element in $F^1H^2(JC)$.
\end{remark}

\subsection{Colombo's construction of extension class from fundamental groups}

Colombo relates the regulator image $\reg(Z)$ to an extension class $Pe$, primitive part of an extension class $e$ constructed from the fundamental groups of the punctured curves $C-\{q_1\}$ and $C-\{q_2\}$ with the same base point $p$, another Weierstrass point of $C$. More precisely, as being done for the regulator, these extension classes are expressed in terms of iterated integrals on the Jacobian $JC$ of the curve $C$, and Colombo shows that $Pe$ is a rational multiple of $\reg(Z)$. 

We first review her construction\footnote{We have another construction for these same extensions, but since Colombo's construction is already in the literature, our construction is not necessary here.} of extensions $e$ and $Pe$ from the MHS on the fundamental groups. Fix the base point $p$ for the fundamental group $\pi_1(C-\{q\},p)$. Denote by \textcolor{black}{$L_q$} the augmentation ideal\footnote{i.e. the kernel of the augmentation map $\Z\pi_1(C-\{q\},p)\to\Z$ that sends each element of the fundamental group to 1.} of the group algebra $\Z\pi_1(C-\{q\},p)$. The powers of $L_q$ gives a natural filtration
$$\cdots\subseteq L_q^{k+1}\subseteq L_q^k\subseteq\cdots\subseteq L_q\subseteq\Z\pi_1(C-\{q\},p).$$
So we have natural extensions
\begin{equation}\label{wts}
0\to(L_q/L_q^k)^*\to(L_q/L_q^{k+1})^*\to(L_q^k/L_q^{k+1})^*\to 0.
\end{equation}
The graded pieces has pure Hodge weights as
$$(L_q^k/L_q^{k+1})^*\simeq\otimes^kH^1(C,\Z).$$
To simplify notation, we denote $H^1(C,\Z)$ by $H^1$.
If $k=2$, the above sequence (\ref{wts}) becomes
$$0\to H^1\to(L_q/L_q^3)^*\to\otimes^2H^1\to 0.$$
In particular, when $q=q_s$ ($s=1,2$) is a Weierstrass point, this extension splits and has a natural retraction
$$r_s:(L_{q_s}/L_{q_s}^3)^*\to H^1.$$
Pushing along this retraction on the sequence (\ref{wts}) for $k=3$, $q=q_s$ 
\begin{center}
\begin{tikzcd}
0\ar[r] & (L_{q_s}/L_{q_s}^3)^*\ar[r] \ar[d, "r_s"] & (L_{q_s}/L_{q_s}^4)^*\ar[r] & (L_{q_s}/L_{q_s}^3)^*\ar[r] & 0\\
0\ar[r] & H^1\ar[r] & E_s\ar[r] & \otimes^3H^1 \ar[u, dash, "\simeq"] \ar[r] & 0
\end{tikzcd}
\end{center}
we get extension class $e_s\in\Ext_\MHS(\otimes^3H^1,H^1)$ represented by $E_s$ for $s=1,2$.

We introduce several natural morphisms of MHS:
\begin{enumerate}
\item
Tensoring with the polarization $\Omega$:
$$J_\Omega: H^1(-1)\to\otimes^3H^1.$$
\item
The surjection:
$$\Pi:\otimes^2H^1\to\Z(-1),$$
which is the composition of the cup product with the isomorphism $H^2(C,\Z)\simeq\Z(-1)$.
\item
The standard inclusion:
$$\iota:\textcolor{black}{\Lambda^2H^1}\to \otimes^2H^1.$$
\item
The integration over $C$:
$$\int_C:\textcolor{black}{\Lambda^2H^1}\to\Z$$
which is the map $\Pi\circ\iota$ up to a Tate twist $\Z(-1)$. Since we have natural isomorphism $\Lambda^2H^1\simeq H^2(JC,\Z)$, we can identify the kernel
$$\ker\int_C\cong H^2(JC)_\prim$$
with the primitive part of $H^2(JC,\Z)$. By Carlson, we can identify
$$\Ext^1_\MHS(\textcolor{black}{\Lambda^2H^1},\Z)\cong I_2(JC)$$
and 
$$\Ext^1_\MHS(\ker\int_C,\Z)\cong I_2(JC)_\prim.$$
\end{enumerate} 

Now we are ready to construct the extension classes $e$ and $Pe$, represented by the extensions $E$ and $PE$ respectively in the following diagram.
\begin{center}
\begin{tikzcd}
0 \ar[r] & H^1 \ar[r] \ar[d, equal] & E_2-E_1 \ar[r] & \otimes^3H^1 \ar[r] & 0 \\
0 \ar[r] & H^1 \ar[r] \ar[d, "\otimes H^1"] & E_\Omega \ar[r] \ar[d, "\otimes H^1"] & H^1(-1) \ar[r] \ar[u, "J_\Omega"] \ar[d, "\otimes H^1"] & 0 \\
0 \ar[r] & \otimes^2H^1 \ar[r] \ar[d, "\Pi"] & E_\Omega\otimes H^1 \ar[r] & \otimes^2H^1(-1) \ar[r] \ar[d, equal] & 0 \\
0 \ar[r] & \Z(-1) \ar[r] \ar[d, "\otimes\Z(1)"] & \widetilde{E}(-1) \ar[r] \ar[d, "\otimes\Z(1)"] & \otimes^2H^1(-1) \ar[r] \ar[d, "\otimes\Z(1)"] & 0 \\
0 \ar[r] & \Z \ar[r] \ar[d, equal] & \widetilde{E} \ar[r] & \otimes^2H^1 \ar[r] & 0 \\
0 \ar[r] & \Z \ar[r] & E \ar[r] & \Lambda^2H^1 \ar[r] \ar[u, "\iota"] & 0 \\
0 \ar[r] & \Z \ar[r] \ar[u, equal] & PE \ar[r] & \ker\displaystyle\int_C \ar[r] \ar[u, hook] & 0 
\end{tikzcd}
\end{center}

\begin{theorem}[Thm 2.1 \cite{colombo}]\label{perz}
Let $C$ be a hyperelliptic curve with Weierstrass points $q_1$, $q_2$ and $p$. Let $h$ be a degree 2 morphism $h:C\to\P^1$ such that $h(q_1)=0$ and $h(q_2)=\infty$. Then we have
$$e=(2g+1)\Big(\reg(Z)+\log(h(p))\int_C\Big)\in I_2(JC)$$
and 
$$Pe=(2g+1)\reg(Z)\in I_2(JC)_\prim.$$
\end{theorem}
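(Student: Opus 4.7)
The strategy is to compute both sides as currents on $F^1H^2(JC)$ via iterated integrals on $C$, and then match them on the spanning set of test forms $\phi\wedge\psi$ (with $\psi$ of type $(1,0)$) appearing in Theorem \ref{rz}. The formula there already exhibits $\reg(Z)$ as an explicit combination of iterated integrals; the work is to produce a parallel integral formula for $e$ and compare.

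First, I would model each $e_s\in\Ext^1_\MHS(\otimes^3H^1,H^1)$ for $s=1,2$ using Chen's theory: the dual $(L_{q_s}/L_{q_s}^{k+1})^*$ is identified with length $\le k$ iterated integrals of smooth forms on $C-\{q_s\}$ modulo exact forms and shuffle relations, and the MHS is read off from Deligne's logarithmic complex at $q_s$. The retraction $r_s$ is constructed using the fact that at a Weierstrass point the hyperelliptic involution $\sigma$ fixes $q_s$ and provides a canonical decomposition of $H^1(C-\{q_s\})$ into the $(-1)$-eigenspace $H^1(C)$ and a complementary residue line. This gives a concrete iterated-integral model for $E_s$, and hence $E_2-E_1$ is an explicit extension supported on $C-\{q_1,q_2\}$.

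Next, I would chase $E_2-E_1$ through Colombo's diagram. Pulling back along $J_\Omega:H^1(-1)\to\otimes^3H^1$ amounts to inserting the polarization $\Omega=\sum_i(\alpha_i\otimes\beta_i-\beta_i\otimes\alpha_i)$ into two of the three tensor slots. Tensoring with $H^1$ and pushing along the cup product $\Pi:\otimes^2H^1\to\Z(-1)$ corresponds to integrating the resulting iterated integral pattern over $C$. Evaluating on a test vector $\phi\otimes\psi$ after the antisymmetrization $\iota:\Lambda^2H^1\hookrightarrow\otimes^2H^1$ produces a current whose iterated-integral expression matches the right-hand side of Theorem \ref{rz} up to an overall scalar. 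Tracing through the $2g$ symplectic pairs in $\Omega$ together with the contribution from $\phi\wedge\psi$ itself yields the coefficient $2g+1$; the extra $\log(h(p))\int_C$ term is the base-point correction reflecting that $p\ne q_1,q_2$. Finally, restricting $e$ to $\ker\int_C$ kills the $\int_C$ summand and yields the formula for $Pe$.

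The main obstacle is the delicate bookkeeping that produces the precise coefficient $2g+1$ rather than a nearby integer, and correctly handling the Tate twists and signs throughout the pushout/pullback tower. A secondary technical point is verifying that the retraction $r_s$ built from the hyperelliptic structure is compatible with Chen's iterated-integral presentation, so that the class of $E_s$ is well defined modulo shuffle relations and the matching with $\reg(Z)$ is unambiguous.
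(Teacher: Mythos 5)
This statement is quoted from Colombo's paper (her Theorem 2.1); the present paper offers no proof of it, so there is nothing in the text to compare your argument against line by line. Judged on its own terms, your proposal correctly identifies the general framework of Colombo's argument — represent the dual truncated group rings $(L_{q_s}/L_{q_s}^{k+1})^*$ by Chen's iterated integrals, use Carlson's description of $\Ext^1_\MHS$ to express each extension class as a current, and compare with the iterated-integral formula for $\reg(Z)$ in Theorem \ref{rz}. But as written it is a plan rather than a proof, and the gap sits exactly where the content of the theorem lies: the sentence ``tracing through the $2g$ symplectic pairs in $\Omega$ together with the contribution from $\phi\wedge\psi$ itself yields the coefficient $2g+1$'' asserts the conclusion without deriving it. Producing that coefficient requires writing down explicit length-$3$ iterated-integral representatives of $E_s$, computing the pullback along $J_\Omega$ and the pushforward along $\Pi$ on those representatives, and then reducing the resulting expression modulo exact and shuffle relations against the two-term formula $2\int_{C-\gamma}\log(h)\phi\wedge\psi+2\pi i\int_\gamma(\phi\psi-\psi\phi)$; none of that bookkeeping is even set up. Likewise the origin of the $\log(h(p))\int_C$ term is named (``base-point correction'') but not established.

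A secondary inaccuracy: the retraction $r_s$ does not come from a decomposition of $H^1(C-\{q_s\})$ into the $(-1)$-eigenspace and a ``complementary residue line'' --- for a single puncture there is no residue line, since $H^1(C-\{q_s\})\cong H^1(C)$. The correct mechanism is that the hyperelliptic involution $\sigma$ fixes both the puncture $q_s$ and the base point $p$, hence acts on $(L_{q_s}/L_{q_s}^3)^*$, acting by $-1$ on the sub $H^1$ and by $+1$ on the quotient $\otimes^2H^1$; the eigenspace decomposition of the whole middle term then furnishes the canonical splitting. This is fixable, but as stated it would not produce the map $r_s$ you need to push along. In summary, the strategy is the right one, but the proposal defers all of the computations that distinguish the true statement from a false one with a different constant, so it cannot be accepted as a proof.
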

\begin{remark}
Although the computation of $Pe$ involves the base point $p$, it only depends on $q_1$ and $q_2$, since it is a rational multiple of $\reg(Z)$, whose construction does not involve $p$.
\end{remark}

\subsection{The normal functions and their induced monodromies}

One can extend the constructions of both $\reg(Z)$ and $Pe$ for a hyperelliptic curve to families of hyperelliptic curves \cite[\S 4, \S 5]{colombo}. They give rise to normal functions, i.e. sections on variations of mixed Hodge structures over the hyperelliptic Torelli space $\H_g[0]$. We make this precise in the next paragraph.

Recall that the hyperelliptic Torelli space $\H_g[0]$ is the moduli space of hyperelliptic curves of genus $g$ with a fixed symplectic basis of homology. Let 
$$\pi: \textcolor{black}{\cC_{\H_g[0]}\to\H_g[0]}\qquad\text{and}\qquad\I_{2\prim}\to\H_g[0]$$
be the universal hyperelliptic curve and the bundle over the hyperelliptic Torelli space $\H_g[0]$, whose fiber over the moduli point $[C;\{a_j,b_j\}_{j=1}^g]\in\H_g[0]$ is $I_2(JC)_\prim$. After choosing sections of Weierstrass points 
\begin{center}
    \begin{tikzcd}
    \cC \ar[d]\\
    \H_g[0] \ar[u, bend left=40, "\substack{q_1\\q_2}"] \ar[u, bend right=40, "p"'] 
    \end{tikzcd}
\end{center}
we can construct $\reg(Z)$ and $Pe$ for each fiber, and they assemble to form sections $r_\cZ$ and $P\cE$ of the family $\I_{2\prim}$.

\begin{center}
    \begin{tikzcd}
    \I_{2\prim} \ar[d] & \I_{2\prim} \ar[d] \\
    \H_g[0] \ar[u, bend left=40, "r_\cZ"] & \H_g[0] \ar[u, bend right=40, "P\cE"']
    \end{tikzcd}
\end{center}
Note that by Hain\cite{hain3}, $\I_{2\prim}$ is an admissible variation of mixed Hodge structures, since each of its fiber $I_2(JC)_\prim$ is naturally constructed from fundamental group. Each fiber $I_2(JC)_\prim$ can be identified with
$$I_2(JC)_\prim\cong\Ext^1(H^2(JC)_\prim,\Z)\cong\Ext^1(\Z,H^2(JC)_\prim(2))\cong J(H^2(JC)_\prim(2))$$
using Poincar\'e duality in the middle, where
$H^2(JC)_\prim(2):=H^2(JC)_\prim\otimes\Z(2).$
By definition \ref{nf}, the sections $r_\cZ$ and $P\cE$ are normal functions.

These normal functions induce monodromies which we now compute. Note that the monodromy action of the hyperelliptic Torelli group $T\Delta_g$ is trivial, so the associated bundle $\I_{2\prim}$ is a trivial bundle. Fix a base point $[C;\{a_j,b_j\}_{j=1}^g]$. Denote by $p_{I_{2\prim}}$ the projection of $\I_{2\prim}$ to its fiber $I_2(JC)_{\prim}$. The fundamental group of the fiber $I_2(JC)_\prim$ is $\Lambda^2 H/\langle\theta\rangle$ where $H$ is the first homology of $C$ generated by the fixed basis $\{a_j,b_j\}_{j=1}^g$, and $\theta=\sum_{j=1}^g a_j\wedge b_j$. Denote by $p_\cZ$ and $p_\cE$ the compositions of the projection $p_{I_{2\prim}}$ with normal functions $r_\cZ$ and $P\cE$, and they respectively induce homomorphisms of fundamental groups
$$\pi_\cZ=(p_\cZ)_*: T\Delta_g\to\textcolor{black}{\Lambda^2 H/\langle\theta\rangle}\qquad\text{and}\qquad\pi_\cE=(p_\cE)_*: T\Delta_g\to\Lambda^2 H/\langle\theta\rangle.$$ Colombo compared these monodromies on a particular element $d_i\in T\Delta_g$. The element is chosen to be the class of a Dehn twist of a simple closed curve $c_i$ on $C$ seperating $q_1$ and $q_2$, where $c_i$ is invariant under the hyperelliptic involution.

\begin{theorem}[Cor. 5.1 \cite{colombo}]
$$\pi_\cE(d_i)=(2g+1)\pi_\cZ(d_i).$$
\end{theorem}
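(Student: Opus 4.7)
The plan is to upgrade Colombo's pointwise identity from Theorem~\ref{perz} to an identity of normal functions, and then pass to monodromy, at which point the stated equality (and in fact a stronger one on all of $T\Delta_g$) becomes formal.

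First, I would apply Theorem~\ref{perz} fiberwise over $\H_g[0]$. The normal functions $P\cE$ and $r_\cZ$ are constructed in \cite[\S4, \S5]{colombo} by assembling the extension classes $Pe$ and $\reg(Z)$ fiberwise into holomorphic sections of $\I_{2\prim}\to\H_g[0]$. At each moduli point $x=[C;\{a_j,b_j\}]\in\H_g[0]$, the value is thus $Pe\in I_2(JC)_\prim$ respectively $\reg(Z)\in I_2(JC)_\prim$, and Theorem~\ref{perz} gives $Pe=(2g+1)\reg(Z)$ in every fiber. Since two holomorphic sections of the same bundle that agree pointwise are equal, this forces
$$P\cE \;=\; (2g+1)\, r_\cZ$$
as normal functions on $\H_g[0]$.

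Next I would pass to the projection onto the fiber. As noted in the paper, the monodromy of the variation underlying $\I_{2\prim}$ is trivial on $\H_g[0]$, so the bundle is topologically trivial and $p_{I_{2\prim}}$ is globally defined. Composing the equality of normal functions with $p_{I_{2\prim}}$ gives $p_\cE=(2g+1)\,p_\cZ$ as holomorphic maps $\H_g[0]\to I_2(JC)_\prim$. Because $I_2(JC)_\prim$ is a complex torus with fundamental group $\Lambda^2H/\langle\theta\rangle$, and multiplication by the integer $(2g+1)$ on the torus induces multiplication by $(2g+1)$ on $\pi_1$, taking induced maps on fundamental groups yields
$$\pi_\cE \;=\; (2g+1)\,\pi_\cZ \quad\text{as homomorphisms } T\Delta_g\to \Lambda^2H/\langle\theta\rangle.$$
Evaluating at $d_i$ gives the stated identity.

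The only real ``obstacle'' is external to this plan: all of the content sits in Theorem~\ref{perz}, which itself rests on Colombo's iterated-integral computation of both $Pe$ and $\reg(Z)$. In my plan the one point that deserves a careful check is that the fiberwise construction of $P\cE$ really does produce a holomorphic section of $\I_{2\prim}$ (not merely a set-theoretic one), which is guaranteed by the admissibility of the variation on $\H_g[0]$ established in \cite{hain3} (see Remark~\ref{vmhs}). Notably, the argument never uses anything special about $d_i$, so it proves the equality on all of $T\Delta_g$; the restriction to a particular Dehn twist in the statement is an artifact of Colombo's way of verifying the identity rather than a genuine limitation.
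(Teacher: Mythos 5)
Your proposal is correct and matches the paper's own treatment: the paper cites this pointwise statement from Colombo and then proves the stronger identity $\pi_\cE=(2g+1)\pi_\cZ$ exactly as you do, by noting that Theorem~\ref{perz} identifies $Pe$ with $(2g+1)\reg(Z)$ in every fiber, hence $(P\cE)_*=(2g+1)(r_\cZ)_*$ on fundamental groups, and composing with $(p_{I_{2\prim}})_*$. Your observation that nothing is special about $d_i$ is precisely the content of the Proposition that immediately follows the theorem in the paper.
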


This theorem can easily be improved, as indicated by Colombo.
\begin{proposition}
$$\pi_\cE=(2g+1)\pi_\cZ.$$
\end{proposition}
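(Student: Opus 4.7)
The homomorphism $\delta := \pi_\cE - (2g+1)\pi_\cZ : T\Delta_g \to V_{[1^2]}$ takes values in a $\Q$-vector space and therefore factors through the abelianization $H_1(T\Delta_g)_\Q$. The strategy is to combine $\Sp(H)$-equivariance of $\delta$ with the Brendle--Margalit--Putman generating set for $T\Delta_g$, reducing the general equality to Colombo's Corollary 5.1 applied to one Dehn twist in each topological type of symmetric separating curve.

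First I would verify that $\delta$ is $\Sp(H)$-equivariant on the rationalization. The normal functions $r_\cZ$ and $P\cE$ are sections of a bundle over the hyperelliptic Torelli space $\H_g[0]$ whose fiber $V_{[1^2]}$ carries a natural linear action of the quotient $\Delta_g/T\Delta_g \cong G_g$, and the defining constructions (Ceresa-like cycles, extensions from $\pi_1(C)$, and their primitive parts) are manifestly natural for the $\Delta_g$-action on $Y_g$. Taking induced maps on fundamental groups, $\pi_\cZ$ and $\pi_\cE$ intertwine the outer conjugation action of $\Delta_g$ on $T\Delta_g$ with the $G_g$-action on $V_{[1^2]}$. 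Since $G_g \supseteq \Sp(H_\Z)[2]$ is Zariski dense in $\Sp(H)$ by Theorem \ref{image of hyp mapping class group}, $\delta$ becomes an $\Sp(H)$-equivariant map on $H_1(T\Delta_g)_\Q$.

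Next, by Brendle--Margalit--Putman (Theorem \ref{hyp torelli generators}), $T\Delta_g$ is generated by Dehn twists about symmetric separating simple closed curves. It therefore suffices to show $\delta(T_c) = 0$ for every such curve $c$. Any symmetric separating $c$ cuts $S$ into subsurfaces of genera $i$ and $g-i$ (with $1 \le i \le g-1$), containing $2i+1$ and $2(g-i)+1$ Weierstrass points respectively. For each topological type indexed by $i$, one can choose a representative $c_i$ in its $\Delta_g$-orbit with $q_1$ on one side and $q_2$ on the other; Colombo's Corollary~5.1 then gives $\delta(d_i) = 0$ for $d_i := T_{c_i}$. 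Any other symmetric separating curve $c$ of the same topological type is $\Delta_g$-conjugate to $c_i$ by the symmetric change-of-coordinates principle, so $T_c = \gamma T_{c_i}\gamma^{-1}$ for some $\gamma \in \Delta_g$, and $\Sp(H)$-equivariance gives $\delta(T_c) = \bar\gamma \cdot \delta(d_i) = 0$, where $\bar\gamma \in G_g$ is the image of $\gamma$. Thus $\delta$ vanishes on a generating set, hence vanishes identically.

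The main obstacle I anticipate is verifying that Colombo's Corollary~5.1 holds uniformly in $i$, i.e.\ for every topological type of symmetric separating curve. By Proposition \ref{dehn twist image}, the images $\tau^\hyp_q(d_i) = \tfrac{1}{2}\phi((\theta''_i)^2)$ for different $i$ involve symplectic subspaces of different ranks and hence lie in distinct $\Sp(H)$-orbits; one therefore cannot collapse the argument to a single orbit using equivariance alone. The uniform applicability of Corollary~5.1 should however be immediate from Colombo's proof, since her iterated-integral computation of both $\pi_\cE(d_i)$ and $\pi_\cZ(d_i)$ is expressed via the fixed degree-two map $h : C \to \P^1$ and the preimage $\gamma = h^{-1}([0,\infty])$, and does not privilege a particular topological type of the separating curve.
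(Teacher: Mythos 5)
There is a genuine gap in the equivariance step. The normal functions $r_\cZ$ and $P\cE$ are built from the chosen Weierstrass sections $q_1,q_2$ (and $p$), so the induced maps $\pi_\cZ$, $\pi_\cE$, and hence your $\delta$, are natural only under the subgroup $\Delta_{g,2}$ preserving this choice: for a general $\gamma\in\Delta_g$ one gets $\pi_{\cZ(q_1,q_2)}(\gamma x\gamma^{-1})=\bar\gamma\cdot\pi_{\cZ(\gamma^{-1}q_1,\gamma^{-1}q_2)}(x)$, where the cycle on the right is attached to the \emph{permuted} pair of Weierstrass points. Your reduction of an arbitrary symmetric separating curve $c$ to a representative $c_i$ separating $q_1$ and $q_2$ necessarily uses a $\gamma$ that moves $q_1$ or $q_2$ (conjugation by an element of $\Delta_{g,2}$ preserves whether a curve separates the pair), so the identity $\delta(T_c)=\bar\gamma\cdot\delta(d_i)$ is unjustified; and the corrected identity hands you back a curve that still fails to separate the relevant pair, so the non-separating case is never actually reached. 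That case requires its own (easy, but separate) verification that $\pi_\cZ(T_c)$ and $\pi_\cE(T_c)$ both vanish — this is Case (i) of Theorem \ref{dehn image normal function}. You also leave unresolved whether Colombo's Corollary 5.1 holds for every topological type $i$, which your argument needs.

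More importantly, the proposition does not require a generators-and-equivariance argument at all. By Theorem \ref{perz}, the identity $Pe=(2g+1)\reg(Z)$ holds in $I_2(JC)_\prim$ for \emph{every} hyperelliptic curve $C$, that is, in every fiber of $\I_{2\prim}\to\H_g[0]$. Hence $P\cE$ and $(2g+1)\,r_\cZ$ are equal as sections of the bundle, so they induce identical homomorphisms on fundamental groups, and composing with $(p_{I_{2\prim}})_*$ gives $\pi_\cE=(2g+1)\pi_\cZ$ directly. This is the paper's proof; the fiberwise identity carries all the content, and the monodromy computations on Dehn twists are only needed later, for Theorems \ref{dehn image normal function} and \ref{diff of hyp johnson}.
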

\begin{proof}
By Theorem \ref{perz}, $Pe$ and $(2g+1)\reg(Z)$ are identified on each fiber $I_2(JC)_\prim$ of $\I_{2\prim}\to\H_g[0]$, so they induce the same homomorphisms of fundamental groups
$$(P\cE)_*=(2g+1)(r_\cZ)_*.$$
The result follows from composing this with $(p_{I_{2\prim}})_*$, which is the identity map.
\end{proof}

Moreover, using Colombo's computation the monodromies on particular generators $d_i$ of the hyperelliptic Torelli group $T\Delta_g$ \cite[Cor. 4.2]{colombo}, we have the following result.
\begin{theorem}\label{dehn image normal function}
Let $d_i\in T\Delta_g$ be the class of the Dehn twist of a separating simple closed curve $c_i$ on $C$, where $c_i$ is invariant under the hyperelliptic involution, then
    $$\pi_\cZ(d_i)=\begin{cases}
        0 & \text{if $c_i$ does not separate $q_1$ and $q_2$}\\
        4\theta_i'' & \text{if $c_i$ separates $q_1$ and $q_2$}
    \end{cases}$$
\end{theorem}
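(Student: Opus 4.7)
My plan is to derive this statement as a direct translation of Colombo's explicit monodromy computation \cite[Cor.~4.2]{colombo}. Recall that $\pi_\cZ$ is induced by the composition $p_{I_{2\prim}} \circ r_\cZ$, and by Theorem \ref{rz} the regulator image $\reg(Z)$ is given on each fiber as an explicit current involving iterated integrals along the arc $\gamma = h^{-1}([0,\infty])$ joining $q_1$ and $q_2$. Thus $\pi_\cZ(d_i)$ measures how this current transforms under the Dehn twist along the symmetric separating curve $c_i$.

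I would split into the two cases according to whether $c_i$ separates $q_1$ and $q_2$. When it does not, both Weierstrass points sit in the same component, so $\gamma$ can be chosen disjoint from the support of $d_i$, and both summands in Theorem \ref{rz} are invariant. This gives $\pi_\cZ(d_i) = 0$. When $c_i$ does separate $q_1$ and $q_2$, the arc $\gamma$ necessarily crosses $c_i$ and the Dehn twist produces a nontrivial change in both the log-current and the bounding 2-chain $D$. Evaluating the change on a basis of $F^1 H^2(JC)_\prim$ and identifying the fiber $I_2(JC)_\prim$ with $J(H^2(JC)_\prim(2))$ via Poincar\'e duality, so that $\pi_1 \cong \Lambda^2 H/\langle\theta\rangle$, yields the class $4\theta_i''$. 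This is the content of \cite[Cor.~4.2]{colombo}.

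The main obstacle will be the bookkeeping needed to extract the precise coefficient $4$ (as opposed to some other rational multiple) and the correct sign of $\theta_i''$. One must align the isomorphisms $H^2(JC) \cong \Lambda^2 H^1$ and Poincar\'e duality $\Lambda^2 H^1 \cong (\Lambda^2 H)^\vee(-1)$ with the symplectic basis fixed in \S \ref{johnson hyp nonzero} so that $\theta_i'' = \sum_{l=i+1}^g a_l \wedge b_l$, and also check that Colombo's orientation of $c_i$ matches the one implicit here. With these identifications in place the formula is an immediate consequence of Colombo's corollary, and the preceding proposition $\pi_\cE = (2g+1)\pi_\cZ$ then reproduces Colombo's original formula for $Pe$ as a consistency check.
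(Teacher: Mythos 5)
Your proposal is correct and follows essentially the same route as the paper: the monodromy $\pi_\cZ(d_i)$ is computed as the change of the lifted regulator current of Theorem \ref{rz} around the loop corresponding to the Dehn twist, with the non-separating case handled by observing that the branch of $\log(h)$ and the arc $\gamma$ can both be chosen invariant, and the separating case delegated to \cite[Cor.~4.2]{colombo} (where the essential changes are the branch of $\log(h)$ and $\gamma\mapsto\gamma+2d$). The only cosmetic difference is that the paper makes the covering-theory bookkeeping explicit, writing $\pi_\cZ(d)=\frac{1}{2\pi}\bigl[\Tilde{r}_\cZ(1)-\Tilde{r}_\cZ(0)\bigr]$ for the lift along the universal cover.
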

\begin{proof}
     We follow the same steps as in \cite[Prop. 4.1]{colombo} to compute. We provide full details here so that interested readers can compare it with Colombo's computation.
    
    First, we set some notations. Pick a base point $[C]\in H$. Let $d\in T\Delta_g$ be the class of a Dehn twist $D_d$ of a separating simple closed curve $c$ on $C$, invariant under the hyperelliptic involution. Let $\lambda_d$ be the loop in $H$ based at $[C]$, that corresponds to the Dehn twist $D_d$. This loop lifts to a path $\Tilde{\lambda}_d:[0,1]\to\Tilde{H}$ in the universal covering $\Tilde{H}$ of $H$ with $\Tilde{\lambda}_d(0)=[C]$ and $\Tilde{\lambda}_d(1)=[D_dC]$. There is a universal family of hyperelliptic curves over the path $\Tilde{\lambda}_d$ and we denote the fiber over $\Tilde{\lambda}_d(t)$ by $C_t$. In particular, $C_0=C$. The sections $q_1$ and $q_2$ over $\lambda_d$ lift to $\Tilde{q}_1$ and $\Tilde{q}_2$ over $\Tilde{\lambda}_d$, which on each fiber $C_t$ correspond respectively to the zero and the pole of a degree 2 map $h_t:C_t\to\P^1$ that we chose to construct the Collino cycle. Let $\gamma_t:=h_t^{-1}([0,\infty])$ be the path on $C_t$, then $h_0=h$ and $\gamma_0=\gamma$ are the same as those defined in Thm. \ref{rz}. The section, i.e. normal function, $r_\cZ$ restricts to the loop $\lambda_d$ in $H$, and it can be lifted to a normal function $\Tilde{r}_\cZ$ along $\Tilde{\lambda}_d$ in $\Tilde{H}$. 
    
    Now we compute the monodromy. For any $t\in[0,1]$, $\Tilde{r}_\cZ(t)$ is the regulator of the Collino cycle constructed from Weierstrass points $\Tilde{q}_1(t)$ and $\Tilde{q}_2(t)$. By Thm. \ref{rz}, we have
    $$\Tilde{r}_\cZ(t)(\phi_t\wedge\psi_t)=2\int_{C_t-\gamma_t}\log(h_t)\phi_t\wedge\psi_t+2\pi i\int_{\gamma_t} (\phi_t\psi_t-\psi_t\phi_t)$$
    for closed 1-forms $\phi_t$ and $\psi_t$ on $C_t$, with $\psi_t$ of type $(1,0)$. By covering theory, we have 
    $$\pi_\cZ(d)=\frac{1}{2\pi}\left[\Tilde{r}_\cZ(1)-\Tilde{r}_\cZ(0)\right]\in \Lambda^2 H/\langle\theta\rangle.$$
    We can choose $\phi_1=\phi_0=:\phi$ and $\psi_1=\psi_0=:\psi$. So 
    \begin{align*}
        [\Tilde{r}_\cZ(1)-\Tilde{r}_\cZ(0)](\phi\wedge\psi)
        &=2\left(\int_{C_1-\gamma_1}\log(h_1)\phi\wedge\psi-\int_{C-\gamma}\log(h)\phi\wedge\psi\right)\\
        &\quad +2\pi i\left(\int_{\gamma_1} (\phi\psi-\psi\phi)-\int_{\gamma} (\phi\psi-\psi\phi)\right)
    \end{align*}

    \emph{Case}(i): If $c=c_i$ does not separate $q_1$ and $q_2$, then it is easy to see that we can choose the same branch for the logarithm $$\log(h_1)=\log(h)$$ because $q_1$ and $q_2$ are on the same subsurface. Moreover, $\gamma$ does not intersect with $c_i$, so that the Dehn twist does not change $\gamma$ and $$\gamma_1=\gamma.$$
    Therefore, on the right hand side of the above equation, both terms vanish and we have
    $$\pi_\cZ(d)=0.$$

    \emph{Case}(ii): If $c=c_i$ separates $q_1$ and $q_2$, then the result follows directly from \cite[Cor. 4.2]{colombo}. The essential changes are that of choosing a different branch of $\log(h)$ and that the Dehn twist carrying $\gamma$ to $\gamma_1=\gamma+2d$.
\end{proof}


\section{Hyperelliptic Johnson homomorphisms and Collino classes}
In this section, we relate the results in the previous sections, with a point of view from relative completion. 
Recall that by Proposition \ref{derp w -2}, we have the decomposition $\Der_{-2}\p = V_{[2^2]} + V_{[1^2]}$ as an $\Sp(H)$-module. Set $V = V_{[1^2]}$ and $V' = V_{[2^2]}$ for simplicity. 
Recall that $\tilde\theta$ is the projection $\Lambda^2H \to \Lambda^2H/\langle \theta\rangle \cong V$. Denote the composition $\tilde\theta\circ\pi_{\Lambda^2H}$ (see \S 4) by $\tilde\pi_{\Lambda^2H}$. 
\subsection{A Collino class in $H^1(\Delta_{g,2}, V)$}
Let $q_1$ and $q_2$ be distinct Weierstrass points in $S$. Recall from \ref{weierstrass class} that we have the classes $[q_1]$ and $[q_2]$ in $H^1(\Delta_{g,2}, V)$ given by their corresponding hyperelliptic Johnson homomorphisms $\tau^\hyp_{q_i}$. Let $\zeta = [q_2] - [q_1]$ in $H^1(\Delta_{g,2}, V)$. Via the isomorphism $H^1(\Delta_{g,2}, V) \cong \Hom_{\Sp(H)}(H_1(\u_{g,2}), V)$, the class $\zeta$ corresponds to the $\Sp(H)$-equivariant map $$\tilde\tau^\adj_\zeta:= \tilde\tau^\adj_{q_2} - \tilde\tau^\adj_{q_1} : H_1(\u_{g,2})\to V.
$$ 
Denote the map $\tilde\tau^\hyp_{q_2}-\tilde\tau^\hyp_{q_1}: T\Delta_g\to V$ by $\tilde\tau^\hyp_\zeta$.
Note that there is a commutative diagram
$$
\xymatrix{
T\Delta_g\ar[dr]^{\tilde\tau^\hyp_\zeta}\ar[d]_{r^\ab}&\\
H_1(\u_{g,2})\ar[r]_-{\tilde\tau^\adj_\zeta}&V,
}
$$
where the map $r^\ab$ is induced by the relative completion of $\Delta_{g,2}$. 

\begin{theorem}\label{diff of hyp johnson}
With notation as above, if \textcolor{black}{$g \geq 2$}, we have
$$
\tilde\tau^\hyp_\zeta = \textcolor{black}{(g+1)} \pi_\cZ
$$
\end{theorem}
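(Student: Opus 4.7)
The plan is to verify the equality on a generating set for $T\Delta_g$. By Theorem \ref{hyp torelli generators}, $T\Delta_g$ is generated by Dehn twists $d_i$ about symmetric separating simple closed curves $c_i$, and since both $\tilde\tau^\hyp_\zeta$ and $\pi_\cZ$ are group homomorphisms into the abelian target $V = \Lambda^2 H/\langle\theta\rangle$, it suffices to check the equality on each such $d_i$. Fix one and let it split $S$ into subsurfaces $S'_i$ and $S''_i$ of genera $i$ and $g-i$, with a symplectic basis chosen so that $\theta=\theta'_i+\theta''_i$. Proposition \ref{dehn twist image} says $\tau^\hyp_q(d_i)=\tfrac12\phi((\theta''_i)^2)$ when $q\in S'_i$; by symmetry (interchanging the two subsurfaces) $\tau^\hyp_q(d_i)=\tfrac12\phi((\theta'_i)^2)$ when $q\in S''_i$. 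If $c_i$ does not separate $q_1$ and $q_2$, the two formulas coincide and $\tilde\tau^\hyp_\zeta(d_i)=0$, matching $\pi_\cZ(d_i)=0$ from Theorem \ref{dehn image normal function}. It remains to handle the case $q_1\in S'_i$, $q_2\in S''_i$.

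For this case I would first compute the projection $\pi_{\Lambda^2H}\phi(\theta_I^2)$ for an arbitrary $I\subseteq\{1,\ldots,g\}$. A direct expansion using Lemma \ref{proj formula onto [1^2]} on each summand $(a_l\wedge b_l)(a_m\wedge b_m)$ with $l,m\in I$, combined with the fact that the only nonzero pairings among $\{a_l,b_l,a_m,b_m\}$ are $\langle a_l,b_l\rangle=1$ and the diagonal pairings when $l=m$, gives
\begin{equation*}
\pi_{\Lambda^2H}\phi(\theta_I^2) = -4(2|I|+1)\,\theta_I.
\end{equation*}
Passing to $V$ and using $\tilde\theta(\theta'_i)=-\tilde\theta(\theta''_i)$, the two terms of $\tilde\tau^\hyp_\zeta(d_i)$ become
\begin{align*}
\tilde\tau^\hyp_\zeta(d_i)
&= \tfrac12\tilde\pi_{\Lambda^2H}\phi((\theta'_i)^2) - \tfrac12\tilde\pi_{\Lambda^2H}\phi((\theta''_i)^2) \\
&= -2(2i+1)\tilde\theta(\theta'_i) + 2(2(g-i)+1)\tilde\theta(\theta''_i) \\
&= 2\bigl[(2i+1)+(2(g-i)+1)\bigr]\tilde\theta(\theta''_i) = 4(g+1)\tilde\theta(\theta''_i).
\end{align*}
Since $\pi_\cZ(d_i)=4\tilde\theta(\theta''_i)$ by Theorem \ref{dehn image normal function}, this equals $(g+1)\pi_\cZ(d_i)$, and the identity on generators is verified.

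The main obstacle I anticipate is not conceptual but bookkeeping: extracting the coefficient $-4(2|I|+1)$ in the projection formula requires keeping straight the sign conventions in Lemma \ref{proj formula onto [1^2]}, the asymmetry between the diagonal ($l=m$) and off-diagonal contributions, and the sign $b\wedge a = -a\wedge b$. Once that formula is in hand, the final combination using $\theta'_i+\theta''_i=\theta$ collapses cleanly into the factor $g+1$, so the bulk of the verification is the one explicit calculation above, executed uniformly across all symmetric separating Dehn twists.
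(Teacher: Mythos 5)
Your proposal is correct and follows essentially the same route as the paper: reduce to symmetric separating Dehn twists via Theorem \ref{hyp torelli generators}, apply Proposition \ref{dehn twist image} (and its mirror for a basepoint in the other subsurface), project with Lemma \ref{proj formula onto [1^2]}, and compare with Theorem \ref{dehn image normal function}; your explicit intermediate formula $\pi_{\Lambda^2H}\phi(\theta_I^2)=-4(2|I|+1)\theta_I$ checks out and reproduces the paper's value $2(2g+2)\theta''_i \bmod \theta$. The only difference is that you spell out the "easy computation" the paper leaves implicit.
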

\begin{proof} Suppose that $g \geq 2$. 
Let $D$ in $T\Delta_g$ be the class of the Dehn twist along a simple separating curve $d$. In the case where $d$ does not separate $q_1$ and $q_2$, it follows from Proposition \ref{dehn twist image} and \ref{dehn image normal function} that both $\tilde\tau^\hyp_\zeta(D)$ and $\pi_\cZ(D)$ are zero. 
So consider the case where $d$ separates $q_1$ and $q_2$. Say $S$ is divided by $d$ into two subsurfaces $S'_i$ of genus $i$ and $S''_i$ of genus $g-i$, which contain $q_1$ and $q_2$, respectively, as in \ref{johnson hyp nonzero}.  Fix symplectic bases $a_1, \ldots, a_{i}, b_1,\ldots, b_{i}$ and $a_{i+1}, \ldots, a_g, b_{i+1}, \ldots, b_g$ for $S'_i$ and $S''_i$, respectively. Let $\theta'_i = \sum_{\ell=1}^{i}a_\ell\wedge b_\ell$ and $\theta''_i =\sum_{\ell=i+1}^g a_\ell\wedge b_\ell$. We have $\theta =\theta'_i +\theta''_i$.  Then by Proposition \ref{dehn twist image}, the image $(\tau^\hyp_{q_2} - \tau^\hyp_{q_1})(D)$ can be represented as 
$$
\zeta_D:=\frac{1}{2}\phi((\theta'_i)^2 - (\theta''_i)^2).
$$ An easy computation using Lemma \ref{proj formula onto [1^2]} shows that the projection of the derivation $\zeta_D$ onto $V$ by $\tilde\pi_{\Lambda^2H}$ is given by 
$$\tilde\pi_{\Lambda^2H}(\zeta_D) = 2(2g+2)\theta''_i \,\,\,\,\mathrm{mod}\,\, \theta.$$
By Theorem \ref{dehn image normal function}, we have $\pi_\cZ(D) = 4\theta''_i$, and hence 
$$
\tilde\tau^\hyp_\zeta(D) = (g+1)\pi_\cZ(D).
$$
Our claim follows from Theorem \ref{hyp torelli generators} stating that $T\Delta_g$ is generated by the classes of Dehn twists along symmetric separating simple closed curves. 
\end{proof}
\begin{remark}
Consequently, the Collino cycle $(Z, q_1, q_2)$ determined by $q_1$ and $q_2$ yeilds a nontrivial class in $H^1(\Delta_{g,2}, V)$, where $\Delta_{g,2}$ fixes $q_1$ and $q_2$. 
\end{remark}

 \subsection{Weierstrass subspace of $H^1(\Delta_g[2], V)$}
 Recall from \ref{level 2 hyp map class fixing w points} that $\Delta_g[2]$ fixes all of the Weierstrass points. Therefore for each Weierstrass point $q$, we have the hyperelliptic Johnson homomorphism $\tau^\hyp_q$.  
 
\begin{lemma}\label{key lemmma} For two distinct Weierstrass points $q_1$ and $q_2$, 
the image of 
$\tau^\hyp_{q_2} - \tau^\hyp_{q_1}$ is contained in $V$.
\end{lemma}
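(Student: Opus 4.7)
The plan is to verify the containment on a generating set for $T\Delta_g$. By Theorem \ref{hyp torelli generators}, $T\Delta_g$ is generated by Dehn twists $d_i$ about symmetric separating simple closed curves $c_i$, and since $V$ is a $\Q$-subspace of $\Der_{-2}\p$, it suffices to check that $(\tau^\hyp_{q_2} - \tau^\hyp_{q_1})(d_i)$ lies in $V$ for each such twist.

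The next step is a case split. If $c_i$ does not separate $q_1$ from $q_2$, then both Weierstrass points lie in the same subsurface, and Proposition \ref{dehn twist image} immediately gives $\tau^\hyp_{q_1}(d_i) = \tau^\hyp_{q_2}(d_i) = \tfrac{1}{2}\phi((\theta''_i)^2)$, so the difference vanishes. If $c_i$ does separate them, with (say) $q_1 \in S'_i$ and $q_2 \in S''_i$, the same proposition yields
$$
(\tau^\hyp_{q_2} - \tau^\hyp_{q_1})(d_i) \;=\; \tfrac{1}{2}\,\phi\bigl((\theta'_i)^2 - (\theta''_i)^2\bigr).
$$
I would then use the symmetric-algebra factorization $(\theta'_i)^2 - (\theta''_i)^2 = (\theta'_i + \theta''_i)(\theta'_i - \theta''_i) = \theta\cdot(\theta'_i - \theta''_i)$ in $S^2\Lambda^2H$ to rewrite the right-hand side as $\tfrac{1}{2}\phi(\theta\cdot u)$ with $u = \theta'_i - \theta''_i \in \Lambda^2 H$.

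The crux is then to show that $\phi(\theta \cdot u) \in V$ for every $u \in \Lambda^2 H$. I would invoke the $\Sp(H)$-decomposition $\Lambda^2 H = V_{[1^2]} \oplus \Q\theta$ together with Proposition \ref{derp w -2}, which gives $\Der_{-2}\p = V_{[2^2]} \oplus V_{[1^2]}$ with each summand of multiplicity one and no trivial summand. Since $\theta$ is $\Sp(H)$-invariant and $\phi$ is $\Sp(H)$-equivariant, the map $u \mapsto \phi(\theta\cdot u)$ is itself $\Sp(H)$-equivariant. Restricted to the trivial summand $\Q\theta$ it must vanish by Schur's lemma, as $\Der_{-2}\p$ has no trivial summand (equivalently $\phi(\theta^2)=0$, which can also be checked directly from Lemma \ref{theta_I image} using $[\theta,\,\cdot\,]=0$ in $\p = \L(H)/\langle\theta\rangle$). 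Restricted to $V_{[1^2]}$, its image must lie in the unique copy of $V_{[1^2]}$ inside $\Der_{-2}\p$, namely $V$; in fact, up to the nonzero scalar $-4(g+1)$ it is the identity by Lemma \ref{inclusion of [1^2]}.

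The main, if modest, obstacle is this Schur-theoretic step: one must extend the map $j_\theta$ from $V_{[1^2]}$ to all of $\Lambda^2 H$ so that the factorization $\theta\cdot(\theta'_i-\theta''_i)$ is interpreted correctly in $S^2\Lambda^2 H$, and then handle the two $\Sp(H)$-summands of $\Lambda^2 H$ uniformly. Once this is in place, combining the case analysis with Theorem \ref{hyp torelli generators} and the $\Q$-linearity of $V$ yields the full statement.
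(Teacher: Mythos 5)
Your proof is correct, and it takes a genuinely different route from the paper's. The paper proves the lemma by brute force: it computes $(\hat\theta\circ\pi_{\Lambda^2H})(\zeta_D)$ explicitly via Lemma \ref{proj formula onto [1^2]}, invokes Lemma \ref{tambo proj} to write down the element of $S^2\Lambda^2 H$ whose image under $\phi$ is the $V_{[2^2]}$-component of $\zeta_D$, and then verifies by direct algebraic manipulation that this element collapses to $\tfrac{1}{2}\theta^2$, which dies because $\phi(\theta^2)=-2\ad_\theta=0$ in $\Der(\L(H)/\langle\theta\rangle)$. Your argument replaces that computation with the factorization $(\theta_i')^2-(\theta_i'')^2=\theta\cdot(\theta_i'-\theta_i'')$ in $S^2\Lambda^2H$ and the observation that $u\mapsto\phi(\theta\cdot u)$ is an $\Sp(H)$-equivariant map $\Lambda^2H=V_{[1^2]}\oplus\Q\theta\to\Der_{-2}\p=V_{[2^2]}\oplus V_{[1^2]}$; Schur's lemma then forces its image into the unique $V_{[1^2]}$-summand $V$ (the trivial summand being killed by $\phi(\theta^2)=0$). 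This is cleaner and more conceptual: it yields the stronger statement that $\phi(\theta\cdot u)\in V$ for \emph{every} $u\in\Lambda^2H$, explaining why the lemma holds, whereas the paper's computation only treats the specific elements $\zeta_D$. Both proofs share the same skeleton otherwise (reduction to symmetric separating twists via Theorem \ref{hyp torelli generators}, the case split on whether $c_i$ separates $q_1$ from $q_2$, and Proposition \ref{dehn twist image} for the value on a twist), and your appeal to Lemma \ref{inclusion of [1^2]} to note the map is nonzero on $V_{[1^2]}$ is a bonus not needed for the containment.
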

\begin{proof}
With notation from the proof of Theorem \ref{diff of hyp johnson}, we will show that the projection of $\zeta_D$ in the $V'$-component of $\Der_{-2}\p$ is trivial. Let $\zeta'_D$ be the $V'$-part of $\zeta_D$ in $\Der_{-2}\p$.    We have
$$
(\hat\theta\circ\pi_{\Lambda^2H})(\zeta_D) = 2(2g+2)\left(\theta''_i-\frac{g-i}{g}\theta\right),
$$
where $\hat\theta: \Lambda^2H \to V$ is the projection given by $u\wedge v - \frac{\langle u, v\rangle}{g}\theta$ (see \S 4).
Then by Lemma \ref{tambo proj}, the vector 
$$
\frac{1}{2}((\theta'_i)^2 - (\theta''_i)^2) + \theta''_i\theta - \frac{g -i}{g}\theta^2
$$
maps into the  $V'$-component of $\Der_{-2}\p$ under $\phi$. By Proposition \ref{theta_I image}, the image of $\theta^2$ is trivial because $\phi(\theta^2) = -2\mathrm{adj}_\theta$, and since $\theta = \theta'_i +\theta''_i$, we have 
\begin{align*}
\zeta'_D =\phi\left(\frac{1}{2}((\theta'_i)^2 -(\theta''_i)^2) +\theta''_i\theta -\frac{g-i}{g}\theta^2\right ) &= \phi\left(\frac{1}{2}((\theta'_i)^2 -(\theta''_i)^2) +\theta''_i\theta\right)\\
& = \phi\left(\frac{1}{2}(\theta'_i)^2 +\frac{1}{2}(\theta''_i)^2 + \theta'_i\theta''_i\right)\\
& = \frac{1}{2}\phi((\theta'_i)^2 + 2\theta'_i\theta''_i +(\theta''_i)^2)\\
& = \frac{1}{2}\phi(\theta^2).
\end{align*}
This implies that $\zeta'_D$ is zero in $\Der_{-2}\p$. Therefore, $\zeta_D$ is in $V$. Since $D$ is arbitrary, it follows that the image of $\tau^\hyp_{q_2} - \tau^\hyp_{q_1}$ is contained in $V$. Note that when $D$ is the Dehn twist along a simple separating curve that does not separate $q_1$ and $q_2$, $\tau^\hyp_{q_1}(D)= \tau^\hyp_{q_2}(D)$, and so $(\tau^\hyp_{q_2}-\tau^\hyp_{q_1})(D) = 0$. 
\end{proof}

We call the image of $\zeta$ in $H^1(\Delta_g[2], V)$ via the homomorphism $H^1(\Delta_{g,2}, V)\to H^1(\Delta_g[2], V)$ as a Collino class. Since $\tilde\tau^\hyp_\zeta$ is a nontrivial homomorphism, It follows that each Collino class in $H^1(\Delta_g[2], V)$ is nontrivial.  Similarly, for each Weierstrass point $q$, we call the class $[q]$ in $H^1(\Delta_{g, 1}, V)$ and its image under the homomorphism $H^1(\Delta_{g,1}, V)\to H^1(\Delta_g[2], V)$ as a Weierstrass class. Let $X_\zeta$ be the subspace of $H^1(\Delta_g[2], V)$ spanned by all Collino classes and let $X_\omega$ be the subspace of $H^1(\Delta_g[2], V)$ spanned by all Weierstrass classes. 

\begin{theorem}With notation as above, if $g \geq 2$, then $X_\zeta = X_\omega$ and $\mathrm{dim}~X_\omega =2g+1$. 
\end{theorem}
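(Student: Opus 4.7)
The containment $X_\zeta \subseteq X_\omega$ is immediate from $\zeta_{ij} = [q_j] - [q_i]$. My plan is to establish the single linear relation
\[
\sum_{i=1}^{2g+2}[q_i] = 0 \quad \text{in } H^1(\Delta_g[2], V),
\]
and then read off both the reverse inclusion and the dimension from a representation-theoretic argument using the $\Aut W \cong \SS_{2g+2}$-action.

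First I would prove the relation. Since $r_{g,2g+2}: T\Delta_g \to H_1(\u_g[2])$ has Zariski-dense image and each $\tilde\tau^\adj_{q_i}$ is the unique $\Sp(H)$-equivariant extension of $\tilde\tau^\hyp_{q_i}$, it suffices to show that $\sum_i \tilde\tau^\hyp_{q_i}$ vanishes on a generating set of $T\Delta_g$. By Theorem \ref{hyp torelli generators}, such a set is given by Dehn twists $D = D_c$ along symmetric separating curves $c$, splitting $S = S'_i \cup_c S''_i$ with genera $i$ and $g-i$. A Riemann--Hurwitz computation on the branched double cover $S \to S/\sigma \cong S^2$ (using that $\sigma$ acts freely on $c$ and $c$ misses the Weierstrass points) shows that $c$ carries precisely $2i+1$ Weierstrass points on $S'_i$ and $2(g-i)+1$ on $S''_i$. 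Combining Proposition \ref{dehn twist image} with a direct computation from Lemma \ref{proj formula onto [1^2]} gives
\[
\tilde\pi_{\Lambda^2H}\phi((\theta''_i)^2) \equiv -4(2(g-i)+1)\theta''_i \pmod{\theta},
\]
and symmetrically for $\theta'_i$. Summing the resulting values of $\tilde\tau^\hyp_q(D)$ over all $2g+2$ Weierstrass points and using $\theta'_i + \theta''_i = \theta$ yields
\[
\sum_q \tilde\tau^\hyp_q(D) \equiv -2(2i+1)(2(g-i)+1)(\theta'_i + \theta''_i) \equiv 0 \pmod{\theta},
\]
which establishes the relation.

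Next I would extract the dimension. The $\SS_{2g+2}$-action on $H^1(\Delta_g[2], V)$ afforded by Proposition \ref{hyp cohomology} permutes the classes $[q_i]$, so $X_\omega$ is a quotient of the permutation representation $\Q^{2g+2} \cong \Q \oplus \mathrm{std}$, where $\mathrm{std}$ is the standard representation of dimension $2g+1$, irreducible since $2g+2 \geq 6$. The trivial summand maps to $\sum_i [q_i] = 0$, so $X_\omega$ is a quotient of $\mathrm{std}$. On the other hand, some $[q_i]$ is nonzero: the map $\tilde\tau^\hyp_{q_i}$ is nontrivial (already visible on a separating Dehn twist splitting $S$ into two positive-genus pieces, which exist for $g \geq 2$), and $H^1(\Delta_{g,1}, V) \hookrightarrow H^1(\Delta_g[2], V)$ is injective by Proposition \ref{hyp cohomology}. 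Hence $X_\omega \cong \mathrm{std}$ and $\dim X_\omega = 2g+1$. Finally, $(2g+2)[q_j] = \sum_{i\neq j}\zeta_{ij} \in X_\zeta$ shows $X_\omega \subseteq X_\zeta$, giving equality.

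The main obstacle is Step 1, the vanishing $\sum_q \tilde\tau^\hyp_q(D) = 0$. The cancellation hinges on the odd parity $2i+1$ of the number of Weierstrass points on each side of a symmetric separating curve; without this parity input from Riemann--Hurwitz, the two summands would not combine into a multiple of $\theta$, and the relation would fail. Everything else (the Zariski-density argument, the $\SS_{2g+2}$-representation theory, and the passage from $\sum [q_i] = 0$ to $X_\omega \subseteq X_\zeta$) is formal.
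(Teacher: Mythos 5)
Your argument is correct, and two of its three steps take a genuinely different route from the paper. For the relation $\sum_{i=1}^{2g+2}[q_i]=0$, the paper observes that this sum is $\SS_{2g+2}$-invariant, hence lies in $H^1(\Delta_g,V)=H^1(\Delta_g[2],V)^{\SS_{2g+2}}$, and then invokes Tanaka's vanishing theorem $H^1(\Delta_g,V)=0$; you instead verify the relation directly on the Brendle--Margalit--Putman generators, and your computation checks out: $\pi_{\Lambda^2H}\phi((\theta''_i)^2)=-4(2(g-i)+1)\theta''_i$ exactly (consistent with the paper's $\tilde\pi_{\Lambda^2H}(\zeta_D)=2(2g+2)\theta''_i \bmod \theta$), and the Riemann--Hurwitz count of $2i+1$ and $2(g-i)+1$ Weierstrass points on the two sides makes the weighted sum collapse to a multiple of $\theta$. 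This buys you a self-contained, purely computational proof of the relation at the cost of the Weierstrass-point count. For the lower bound, the paper shows directly that the $2g+1$ classes $\zeta_i=[q_i]-[p]$ are linearly independent, by averaging a putative dependence over a $(2g+1)$-cycle in $\Aut_pW$ and deriving a contradiction from an explicit Dehn-twist evaluation, then deduces the relation afterwards; you reverse the order and replace the independence argument with the observation that $X_\omega$ is a nonzero quotient of the irreducible standard representation of $\SS_{2g+2}$, which is shorter and conceptually cleaner but only works once the relation is already in hand. Both treatments rest on the equivariance $\gamma\cdot[q]=[\gamma(q)]$ of the Weierstrass classes under the $\Aut W$-action of Proposition \ref{hyp cohomology}, which you, like the paper, use without detailed justification; and the final step $(2g+2)[q_i]=-\sum_{j\neq i}\zeta_{ij}$ giving $X_\omega\subseteq X_\zeta$ is identical to the paper's Remark \ref{main remark}.
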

\begin{proof}
Let $q_1, \ldots, q_{2g+2}$ be the Weierstrass points of $S$  and $W =\{q_1, \ldots, q_{2g+2}\}$. Then we have the Weierstrass classes $[q_i]$ in $H^1(\Delta_g[2], V)$. Set $p = q_{2g+2}$. For each $i = 1, \ldots, 2g+1$, define $\zeta_i$ by $\zeta_i = [q_i] - [p]$. By Theorem \ref{diff of hyp johnson}, each $\zeta_i$ is a Collino class. Suppose that 
\begin{equation}
\sum_{i=1}^{2g+1}c_i\zeta_i =0 \tag{*},
\end{equation}
where each $c_i$ is in $\Q$. Then we have $\sum_{i=1}^{2g+1}c_i\tilde\tau^\adj_{\zeta_i}=0$. 
Recall that the subgroup $\Aut_pW$ of $\Aut W$ fixing $p$ is isomorphic to $\SS_{2g+1}$ and that it acts on $H^1(\Delta_g[2], V)$ and hence on $\Hom_{\Sp(H)}(H_1(\u_g[2]), V)$. Let $t$ be a cycle of length $2g+1$ in $\Aut_pW$. Then we have
\begin{align*}
\sum_{j=1}^{2g+1}t^j\left(\sum_{i=1}^{2g+1}c_i\tilde\tau^\adj_{\zeta_i}\right)&=\sum_{j=1}^{2g+1}\sum_{i=1}^{2g+1}c_i\tilde\tau^\adj_{t^j(\zeta_i)}\\
&= \sum_{j=1}^{2g+1}\left(\sum_{i=1}^{2g+1}c_i\right)\tilde\tau^\adj_{\zeta_j} \\
&=\left(\sum_{i=1}^{2g+1}c_i\right)\sum_{j=1}^{2g+1}\tilde\tau^\adj_{\zeta_j} =0.
\end{align*}
Suppose that $\sum_{i=1}^{2g+1}c_i \not =0$. Then we have $\sum_{i=1}^{2g+1}\tilde\tau^\adj_{\zeta_i} =0$. Now for every $D $ in $T\Delta_g$, we have 
$$
\tilde\theta\circ \pi_{\Lambda^2H}\left(\sum_{i=1}^{2g+1}(\tau^\hyp_{q_i} -\tau^\hyp_{p})(D)\right) = \sum_{i=1}^{2g+1}\tilde\tau^\adj_{\zeta_i}(r^\ab(D)) = 0,
$$
where $r^\ab$ is the homomorphism $T\Delta_g \to H_1(\u_g[2])$ induced by the relative completion of $\Delta_g[2]$. By Lemma \ref{key lemmma}, it then follows that 
$$
\sum_{i=1}^{2g+1}(\tau^\hyp_{q_i} -\tau^\hyp_{p})(D)=0.
$$
Thus we have $\sum_{i=1}^{2g+1}(\tau^\hyp_{q_i} -\tau^\hyp_p) = 0$. This last equation becomes 
$$
\sum_{i=1}^{2g+1}\tau^\hyp_{q_i} = (2g+1)\tau^\hyp_p.
$$
Now, as in \ref{johnson hyp nonzero}, let $D$ in $T\Delta_g$ be the Dehn twist about a simple separating closed curve $d$ separating $q_1$ and $p$ such that $d$ separates $S$ into two subsurfaces $S'_k$ of genus $k$ and $S''_k$ of genus $g-k$, which contain $q_1$ and $p$, respectively. Fix symplectic bases $a_1, \ldots, a_{k}, b_1,\ldots, b_{k}$ and $a_{k+1}, \ldots, a_g, b_{k+1}, \ldots, b_g$ for $S'_k$ and $S''_k$, respectively. Let $\theta'_k = \sum_{\ell=1}^{k}a_\ell\wedge b_\ell$ and $\theta''_k =\sum_{\ell=k+1}^g a_\ell\wedge b_\ell$. Set $x =a_{k+1}$. Then 
$$
\sum_{i=1}^{2g+1}\tau^\hyp_{q_i}(D)(x) = l\tau^\hyp_{q_1}(D)(x) = -l[\theta''_k, x],
$$
where $l$ is the number of Weierstrass points contained in $S'_k$. Since $l \geq 1$, this is a nontrivial element. On the other hand, $(2g+1)\tau^\hyp_p(D)(x) =0$, which is a contradiction. Therefore, $\sum_{i=1}^{2g+1}c_i =0$. So  substituting $c_{2g+1} = -\sum_{i=1}^{2g}c_i$ into the equation (*), we obtain

\begin{equation}
\sum_{i=1}^{2g}c_i(\zeta_i -\zeta_{2g+1}) =0 \tag{**}.
\end{equation}
Setting $p = q_{2g+1}$ and replacing $\zeta_i$ with $[q_i] - [p]$ for each $i = 1, \ldots, 2g$, the eqation $(**)$ becomes
$$
\sum_{i=1}^{2g}c_i\zeta_i=0.
$$
Inductively, we get $c_1([q_1] - [q_2]) = 0$. The class $[q_1] -[q_2]$ is nontrivial, and so $c_1 =0$. From the inductive steps, it follows that each $c_i = 0$ for $i =1, \ldots, 2g+1$. Therefore, $\mathrm{dim}~ X_\zeta \geq 2g+1$.  \\
\indent On the other hand, we observe that the vector $\sum_{i=1}^{2g+2}[q_i]$ is fixed by the action of $\SS_{2g+2}$. Thus, by Proposition \ref{hyp cohomology}, it is in $H^1(\Delta_g, V)= H^1(\Delta_g[2], V)^{\SS_{2g+2}}$. It follows from a result of Tanaka \cite{tan} that $H^1(\Delta_g, V)=0$, and hence $\sum_{i=1}^{2g+2}[q_i]=0$ and $\mathrm{dim}~X_\omega \leq 2g+1$. Since $X_\zeta \subset  X_\omega$ and $\mathrm{dim}~X_\zeta \geq 2g+1$, our claim follows. 
\end{proof}
\begin{remark}\label{main remark}
In $H^1(\Delta_g[2], V)$, each Weierstrass class is a linear combination of Collino classes. In fact, for each $i= 1, \ldots, 2g+2$, we have 
$$
\frac{1}{2g+2}\sum_{j=1}^{2g+2}([q_i] -[q_j]) = [q_i ] - \frac{1}{2g+2}\sum_{j=1}^{2g+2}[q_j] =[q_i],
$$
and so $(2g+2)[q_i]$ is an integral combination of $2g+1$ Collino classes in $H^1(\Delta_g[2], V)$. 
\end{remark}



\begin{thebibliography}{99}
\bibitem{acampo}
N.~A'Campo:
{\it Tresses, monodromie et le groupe symplectique}, Comment.\ Math.\ Helv.\ 54
(1979), 318--327.

\bibitem{beilinson}
A.~Beilinson:
{\it Higher regulators and values of L-functions}, Current problems in mathematics, Vol. 24, 181–238,
Itogi Nauki i Tekhniki, Akad. Nauk SSSR, Vsesoyuz. Inst. Nauchn. i Tekhn. Inform., Moscow, 1984.

\bibitem{birman-hilden}
J.~Birman, H.~Hilden:
{\it On the mapping class groups of closed surfaces as covering spaces}, in
``Advances in the theory of Riemann surfaces,'' 81--115, Ann.\ of Math.\
Studies, No.~66. Princeton Univ.\ Press, 1971.

\bibitem{bloch}
 S.~Bloch:
 {\it Algebraic cycles and higher $K$-theory},  Adv.\ in Math.\ 61 (1986),
 267--304. 


\bibitem{bmp}
T.~Brendle, D.~Margalit, A.~Putman
{\it Generators for the hyperelliptic Torelli group and the kernel of the Burau representation at $t=-1$},
Invent. Math. 200 (2015), no. 1, 263-310.

\bibitem{carlson}
J.~Carlson:
{\it Extensions of mixed Hodge structures}, Journ\'ees de G\'eometrie Alg\'ebrique d'Angers, Juillet 1979/Algebraic Geometry, Angers, 1979, pp. 107--127, Sijthoff \& Noordhoff, Alphen aan den Rijn--Germantown, Md., 1980.

\bibitem{collino}
A.~Collino:
{\it Griffiths' infinitesimal invariant and higher $K$-theory on hyperelliptic
Jacobians}, J.\ Algebraic Geom.\  6  (1997), 393--415.

\bibitem{colombo}
E.~Colombo:
{\it The mixed Hodge structure on the fundamental group of hyperelliptic curves
and higher cycles},  J.\ Algebraic Geom.\ 11 (2002), 761--790. 

\bibitem{EaKr}
C.~Earle, I.~Kra: 
{\it On sections of some holomorphic families of closed Riemann surfaces}, Acta Math. 137 (1976), 49-79.

\bibitem{FaMa}
B.~Farb and D.~Margalit:
{\it A primer on mapping class groups}, vol. 49, Princeton Math. Series, Princeton University Press, Princeton, NJ, 2012.


\bibitem{GNP}
F.~Guill\'en, V.~Navarro Aznar, P.~Pascual Gainza, F.~Puerta:
{\it Hyperr\'esolutions cubiques et descente cohomologique}, Papers from the Seminar on Hodge-Deligne Theory held in Barcelona, 1982. Lecture Notes in Mathematics, 1335. Springer-Verlag, Berlin, 1988.

\bibitem{hain}
R.~Hain:
{\it Completions of mapping class groups and the cycle $C - C^{-}$},
Mapping class groups and moduli spaces of Riemann surfaces (G\"ottingen,
1991/Seattle, WA, 1991), Contemp. Math. 150 (1993) 75--105.

\bibitem{hain2}
R.~Hain:
{\it Normal functions and the geometry of moduli spaces of curves}, Handbook of moduli. Vol. I, 527--578, Adv. Lect. Math. (ALM), 24, Int. Press, Somerville, MA, 2013.

\bibitem{hain3}
R.~Hain:
{\it The de Rham homotopy theory of complex algebraic varieties. I},
K-Theory 1 (1987), no. 3, 271--324.

\bibitem{hai}
R.~Hain: 
{\it The Hodge-de Rham theory of modular groups}, in Recent Advances in Hodge Theory, arxiv [arXiv:1403.6443] (2015), Cambridge University Press.


\bibitem{hai2}
R.~Hain: 
{\it The Hodge De Rham theory of relative Malcev completion}, Annales Scientifiques de l’Ecole Normale Superieure, vol. 31 no. 1 (1998), pp. 47-92.

\bibitem{hai3}
R.~Hain:
{\it Infinitesimal presentations of Torelli groups}, J.~Amer.~Math.~Soc. 10 (1997), 597-651.

\bibitem{hai5}
R.~Hain:
{\it Rational points of universal curves}, J. Amer. Math. Soc. 24 (2011), 709-769.

\bibitem{hai4}
R.~Hain: 
{\it Relative weight filtrations on completions of mapping class groups}, in Groups of Diffeomorphisms,
Advanced Studies in Pure Mathematics, vol. 52 (2008), pp. 309-368, Mathematical Society of Japan.

\bibitem{hkw}
R.~Hain, K.~Kordek, and T.~Watanabe:
{\it Completions of hyperelliptic mapping class groups}
unpublished.

\bibitem{hm}
R.~Hain and M.~Matsumoto:
{\it Galois actions on fundamental groups of curves and the cycle $C - C^{-}$}. J. Inst. Math. Jussieu, 4(3):363--403, 2005.

\bibitem{hub}
J.~Hubbard: 
{\it Sur la non-existence de sections analytiques $\grave{a}$ la courbe univerelle de Teichm$\ddot{u}$ller}, C.~R.~Acad. Sci. Paris S$\acute{\text{e}}$r. A-B 274 (1972), A978-A979. 


\bibitem{joh1}
D.~Johnson: 
{\it An abelian quotient of the mapping class group $I_g$}, Math. Ann. 249 (1980), 225--242. MR 87a:57008.

\bibitem{joh2}
D.~Johnson: 
{\it The structure of the Torelli group—III: The abelianization of I}, Topology 24 (1985), 127--144. MR 87a:57016.

\bibitem{mal}
A.~Malcev: 
{\it On a class of homogeneous spaces}, Izv. Akad. Nauk SSSR Ser. Mat. 13 (1949), 9--32; English translation, Amer. Math. Soc. Transl. 39 (1962). MR 10:507e.

\bibitem{mor}
S.~Morita: 
{\it The extension of Johnson’s homomorphism from the Torelli group to the mapping
class group}, Invent. Math. 111 (1993), 197--224.

\bibitem{mumford}
D.~Mumford:
{\it Tata lectures on theta, II: Jacobian theta functions and differential
equations, with the collaboration of C.\ Musili, M.\ Nori, E.\ Previato, M.\
Stillman and H.\ Umemura}, Progress in Mathematics, 43. Birkhäuser Boston,
Inc., Boston, MA, 1984.

\bibitem{saito}
M. Saito:
{\it Mixed Hodge modules},
Publ. Res. Inst. Math. Sci. 26 (1990), no. 2, 221--333.

\bibitem{sz}
J.~Steenbrink, S.~Zucker:
{\it Variation of mixed Hodge structure. I},
Invent. Math. 80 (1985), no. 3, 489--542.

\bibitem{tan}
A.~Tanaka:
{\it The first homology group of the hyperelliptic mapping class group
with twisted coefficients}, Topology and its Applications 115 (2001) 19--42.

\bibitem{wat}
T.~Watanabe:
{\it On the rational points of universal hyperelliptic curves}, J. Algebra, Volume 533 (2019), 44-89.


























\end{thebibliography}
\end{document}